\numberwithin{equation}{section}
\numberwithin{figure}{section}
\newtheorem{theorem}{Theorem}[section]
\newtheorem{corollary}[theorem]{Corollary}
\newtheorem{proposition}[theorem]{Proposition}
\newtheorem{conjecture}[theorem]{Conjecture}
\newtheorem{lemma}[theorem]{Lemma}
\newcommand{\charge}{\textrm{c}}
\newcommand{\LHS}{\textup{LHS}}
\DeclareMathOperator{\shape}{shape}
\DeclareMathOperator{\End}{End}
\DeclareMathOperator{\Tab}{Tab}
\DeclareMathOperator{\sgn}{sgn}
\DeclareMathOperator{\eup}{e}
\DeclareMathOperator{\symp}{sp}
\DeclareMathOperator{\so}{so}
\DeclareMathOperator{\iup}{i}
\DeclareMathOperator{\odd}{o}
\renewcommand{\sc}{\scriptstyle}
\newcommand{\Rat}{\mathbb Q}
\newcommand{\Real}{\mathbb R}
\newcommand{\Z}{\mathbb Z}
\newcommand{\NN}{\Z_{+}}
\newcommand{\Complex}{\mathbb C}
\newcommand{\Symm}{\mathfrak{S}}
\newcommand{\abs}[1]{\lvert#1\rvert}
\newcommand{\la}{\lambda}
\newcommand{\La}{\Lambda}
\newcommand{\ip}[2]{\langle#1,#2\rangle}
\newcommand{\ipb}[2]{(#1|#2)}
\DeclareMathOperator{\lev}{lev}
\newcommand{\A}{\mathrm A}
\newcommand{\B}{\mathrm B}
\newcommand{\BC}{\mathrm{BC}}
\newcommand{\C}{\mathrm C}
\newcommand{\D}{\mathrm D}
\newcommand{\qbin}[2]{\genfrac{[}{]}{0pt}{}{#1}{#2}}
\newcommand{\gfrak}{\mathfrak{g}}
\newcommand{\hfrak}{\mathfrak{h}}
\DeclareMathOperator{\ch}{ch}
\DeclareMathOperator{\mult}{mult}
\newcommand{\rhob}{\bar{\rho}}
\newcommand{\Lab}{\bar{\La}}
\newcommand{\Phien}{{{_{1}\hspace{-20000sp}}\Phi_0}}
\begin{document}

\title[Hall--Littlewood polynomials]
{Hall--Littlewood polynomials and characters of affine Lie algebras}

\author{Nick Bartlett}

\author{S.~Ole Warnaar}

\address{School of Mathematics and Physics,
The University of Queensland, Brisbane, QLD 4072, Australia}

\thanks{Work supported by the Australian Research Council}

\subjclass[2010]{05E05, 05E10, 17B67, 33D67}

\begin{abstract}
The Weyl--Kac character formula gives a beautiful closed-form expression
for the characters of integrable highest-weight modules of Kac--Moody
algebras. 
It is not, however, a formula that is combinatorial in nature, 
obscuring positivity.
In this paper we show that the theory of Hall--Littlewood polynomials may 
be employed to prove Littlewood-type combinatorial formulas for the 
characters of certain highest weight modules of the affine Lie algebras 
$\C_n^{(1)}$, $\A_{2n}^{(2)}$ and $\D_{n+1}^{(2)}$. 
Through specialisation this yields generalisations for
$\B_n^{(1)}$, $\C_n^{(1)}$, $\A_{2n-1}^{(2)}$, $\A_{2n}^{(2)}$
and $\D_{n+1}^{(2)}$
of Macdonald's identities for powers of the
Dedekind eta-function. These generalised eta-function identities
include the Rogers--Ramanujan, Andrews--Gordon and G\"ollnitz--Gordon 
$q$-series as special, low-rank cases.
\end{abstract}

\maketitle

\section{Introduction}

Let $\gfrak$ be a symmetrisable Kac--Moody Lie algebra and
$\hfrak^{\ast}$ the dual of the Cartan subalgebra of $\gfrak$.
If $P_{+}$ denotes the set of dominant integral weights,
then the character of an irreducible $\gfrak$-module
$V(\La)$ of highest weight $\La\in P_{+}$ is defined as
\[
\ch V(\La)=\sum_{\mu\in\hfrak^{\ast}} \dim(V_{\mu}) \eup^{\mu}.
\]
Here $\eup^{\mu}$ is a formal exponential and $\dim(V_{\mu})$ 
the dimension of the weight space $V_{\mu}$ in the weight-space 
decomposition of $V(\La)$.
The celebrated Weyl--Kac formula gives a closed-form formula for
the character of $V(\La)$ as \cite{Kac74,Kac90}
\begin{equation}\label{Eq_Weyl-Kac}
\ch V(\La)=
\frac{\sum_{w\in W}\sgn(w) \eup^{w(\La+\rho)-\rho}}
{\prod_{\alpha>0}(1-\eup^{-\alpha})^{\mult(\alpha)}},
\end{equation}
where $W$ is the Weyl group of $\gfrak$, $\sgn(w)$ the signature of
$w\in W$ and $\rho$ the Weyl vector.
The product over $\alpha>0$ is shorthand for a product over
the set of positive roots of $\gfrak$, and
$\mult(\alpha)$ is the dimension of the root space corresponding to $\alpha$.
If $\gfrak$ is of classical type, then $\mult(\alpha)=1$ and 
\eqref{Eq_Weyl-Kac} simplifies to the Weyl character formula.

One feature of characters not evident from the Weyl--Kac formula
is positivity, and a natural question is whether other 
closed-form expressions exist that are manifestly positive.
The purpose of this paper is to show that for the affine
Lie algebras $\C_n^{(1)}$, $\A_{2n}^{(2)}$ and $\D_{n+1}^{(2)}$, 
there is an affirmative answer to this question.
The main player in these manifestly-positive formulas
is the modified Hall--Littlewood polynomial $Q'_{\mu}$ indexed
by the partition (as opposed to weight) $\mu$.
The $Q'_{\mu}$ is a symmetric function with nonnegative coefficients 
in $\Z[q]$ admitting a purely combinatorial
description. For example, for $x=(x_1,\dots,x_n)$,
\begin{equation}\label{Eq_Qpcharge}
Q'_{\mu}(x;q)=\sum_{T\in\Tab(\cdot,\mu)} q^{\charge(T)} s_{\shape(T)}(x)=
\sum_{\la} K_{\la\mu}(q) s_{\la}(x),
\end{equation}
where $\Tab(\la,\mu)$ is the set of semistandard Young tableaux of 
shape $\la$ and weight $\mu$, $s_{\la}(x)$ is the classical Schur function, 
$\charge(T)$ the Lascoux--Sch\"utzenberger charge \cite{LS78} and
$K_{\la\mu}=\sum_{T\in\Tab(\la,\mu)} q^{\charge(T)}$ the Kostka--Foulkes 
polynomial \cite{DLT94,Macdonald95}.

To give an example of the type of results obtained in this paper 
we need some more notation. 
For $\la$ a partition, let $\abs{\la}=\sum_{i\geq 1} \la_i$ and 
$b_{\la}(q)=\prod_{i\geq 1} (q)_{m_i(\la)}$, where $m_i(\la)$ is the 
multiplicity of parts of size $i$ in $\la$ and $(q)_k=(1-q)\cdots(1-q^k)$.
For example, if $\la=(4,4,2,1,1,1)=\big(4^22^11^3\big)$ 
then $b_{\la}(q)=(q)_2(q)_1(q)_3$. If all parts of $\la$ are even 
we say that $\la$ is even.
Now define a second modified Hall--Littlewood polynomial $P'_{\la}$ by
\begin{equation}\label{Eq_Pp}
P'_{\la}(x;q)=Q'_{\la}(x;q)/b_{\la}(q),
\end{equation}
so that its coefficients are in $\Rat(q)$ with nonnegative power
series expansion.
For $\gfrak$ one of $\C_n^{(1)}$, $\A_{2n}^{(2)}$ and
$\D^{(2)}_{n+1}$ with labelling of the Dynkin diagram as shown in 
Figure~\ref{Fig_1}, let
$\{\alpha_0,\dots,\alpha_n\}$, $\{\La_0,\dots,\La_n\}$ and
$\{a_0,\dots,a_n\}$ be the set of simple roots, fundamental weights and
marks of $\gfrak$, and let
$\delta=\sum_{i=0}^n a_i \alpha_i$ be the null root.
Finally, for $x=(x_1,\dots,x_n)$ define
$f\big(x^{\pm}\big):=f(x_1,x_1^{-1},\dots,x_n,x_n^{-1})$.

\begin{theorem}\label{Thm_Cn-character-formula}
Fix a nonnegative integer $m$ and let
\[
q=\eup^{-\delta}
\quad\text{and}\quad
x_i=\eup^{-\alpha_i-\cdots-\alpha_{n-1}-\alpha_n/2}.
\]
Then, for $\gfrak=\C_n^{(1)}$ and $\La=m\La_0$,
\begin{subequations}\label{Eq_CnA2n2-char}
\begin{align}\label{Eq_Cn-char}
\eup^{-\La} \ch V(\La)&=
\sum_{\substack{\la \textup{ even} \\[1.5pt] \la_1\leq 2m}}
q^{\abs{\la}/2} P'_{\la}\big(x^{\pm};q\big) 
\intertext{and, for $\gfrak=\A_{2n}^{(2)}$ and $\La=2m\La_0$,}
\eup^{-\La} \ch V(\La)&=\sum_{\substack{\la \\[1.5pt] \la_1\leq 2m}}
q^{\abs{\la}/2} P'_{\la}\big(x^{\pm};q\big).
\label{Eq_A2n2-char}
\end{align}
\end{subequations}
\end{theorem}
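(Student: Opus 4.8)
The plan is to derive both identities from the Weyl--Kac formula \eqref{Eq_Weyl-Kac} by performing the stated specialisation and then recognising the resulting rational expression as the displayed sum of modified Hall--Littlewood polynomials. I expect the two cases to run in close parallel, the contrast between summing over \emph{even} partitions (for $\C_n^{(1)}$) and over \emph{all} partitions (for $\A_{2n}^{(2)}$) being the $q$-deformed, level-truncated shadow of the two classical Littlewood identities, $\sum_{\la\textup{ even}}s_\la(x)=\prod_{i\leq j}(1-x_ix_j)^{-1}$ versus $\sum_{\la}s_\la(x)=\prod_i(1-x_i)^{-1}\prod_{i<j}(1-x_ix_j)^{-1}$.

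First I would write $W=W_0\ltimes T$, with $W_0$ the finite Weyl group (the hyperoctahedral group common to types $\B_n$ and $\C_n$) and $T$ the translation sublattice, and substitute $q=\eup^{-\delta}$ together with the given $x_i$. The numerator $\sum_{w\in W}\sgn(w)\,\eup^{w(\La+\rho)-\rho}$ then factors: the sum over $T$ assembles into a theta-type series in $q$ and the $x_i$, and the sum over $W_0$ antisymmetrises it, producing a $W_0$-alternating combination of theta functions whose support is governed by the level $m$. In parallel the denominator $\prod_{\alpha>0}(1-\eup^{-\alpha})^{\mult(\alpha)}$ splits into a contribution from the real roots---finite products over $x_i^{\pm}$ weighted by powers of $q$---and from the imaginary roots $k\delta$ ($k\geq 1$), which supply the appropriate $(q;q)_\infty$-type factors.

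The equality to be proved thereby reduces to a single \emph{bounded Littlewood-type identity} for the modified Hall--Littlewood polynomials: one must show that $\sum_{\la\textup{ even},\,\la_1\leq 2m}q^{\abs{\la}/2}P'_\la(x^{\pm};q)$ coincides with the specialised ratio just computed, and likewise without the evenness constraint in the $\A_{2n}^{(2)}$ case. Using the expansions \eqref{Eq_Qpcharge} and \eqref{Eq_Pp}, the left-hand side unfolds into $\sum_\mu c_\mu(q)\,s_\mu(x^{\pm})$ with $c_\mu(q)\in\Rat(q)$ built from Kostka--Foulkes polynomials; since $s_\mu(x^{\pm})$ is a Schur function in the $2n$ variables $x_1,x_1^{-1},\dots,x_n,x_n^{-1}$, these combine---again by a Littlewood mechanism---into the symplectic/orthogonal characters that form the finite part of the affine character, with $q^{\abs{\la}/2}$ supplying precisely the $\delta$-grading.

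The hard part will be controlling the truncation $\la_1\leq 2m$. The unbounded ($m\to\infty$) form of such an identity sums to a clean infinite product and is essentially classical, whereas the cut-off forces the right-hand side to be the ratio of a \emph{finite} theta sum to the denominator, exactly mirroring the finite-level Weyl-group numerator. I would establish this bounded identity either by showing that both sides satisfy the same recurrence in $m$, or by a direct manipulation of the defining charge-expansion of $P'_\la$ against the relevant Cauchy-type kernel (the natural place for Bailey-pair and finite Rogers--Ramanujan machinery to enter); once it is in hand, a term-by-term comparison with the specialised Weyl--Kac ratio closes both identities simultaneously.
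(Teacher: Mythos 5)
Your high-level architecture matches the paper's: the Weyl--Kac formula is indeed rewritten via $W=\overline{W}\ltimes M$ into a theta-like sum antisymmetrised by the hyperoctahedral group (this is Lemma~\ref{Lem_Cn-WK} and Lemma~\ref{Lem_A2n2-WK-1}), and the theorem is indeed reduced to a bounded Littlewood-type identity for the $P'_{\la}$. You also correctly sense that Bailey-pair machinery enters. But the proposal has a genuine gap precisely at the step you defer: neither of the two methods you suggest for the bounded identity --- a recurrence in $m$, or manipulation of the charge expansion \eqref{Eq_Qpcharge} against a Cauchy kernel --- is workable as stated, and the paper does something quite different. The charge/Kostka--Foulkes description of $P'_{\la}$ plays no role; what is needed is the new $q$-hypergeometric multisum representation of Theorem~\ref{Thm_Qphyper} (proved via Jing's vertex operators in Garsia's form \eqref{Eq_BGarsia} together with Milne's $\A_{n-1}$ $q$-binomial theorem), which is then resummed in Proposition~\ref{Prop_Case2} to match, term by term, the $b_\ell,c_\ell\to\infty$ limit of the right-hand side of the $\C_n$ Andrews transformation \eqref{Eq_Cn-Andrews}. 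That transformation is itself built by iterating the Milne--Lilly $\C_n$ Bailey lemma from the unit Bailey pair. An indication of how hard the bounded identity really is: the full two-parameter version (Conjecture~\ref{Con_wzfinite}) remains unproved in the paper, and only the $w=0$ case --- just enough for \eqref{Eq_Cn-char} and \eqref{Eq_A2n2-char} --- is established.

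A second, independent gap is your treatment of the Weyl-group symmetry as automatic. The left-hand side of \eqref{Eq_Cn-Andrews} is a terminating sum over $0\subseteq r\subseteq N$ with no hyperoctahedral invariance; to make it match the bilateral, $\overline{W}$-symmetric sum in Lemma~\ref{Lem_Cn-WK} one must double the rank to $2n$ and take the delicate limit $y_i\to x_i^{-1}$ of Proposition~\ref{Prop_lim}, which is nontrivial because $\Delta_{\C}(x)$ in the denominator vanishes in that limit; this occupies the entire appendix. Finally, the dichotomy even-$\la$ versus all-$\la$ does not arise from the two classical Littlewood identities as you suggest, but from the choice of specialisation of the Bailey parameters: $c\to\infty$ kills all terms with $l(\la_{\odd})>0$ (forcing $\la$ even, giving $\C_n^{(1)}$), whereas $c\to-q^{1/2}$ retains all partitions and, via Lemma~\ref{Lem_A2n2-WK-1}, produces the $\A_{2n}^{(2)}$ character.
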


We note the remarkable similarity between \eqref{Eq_CnA2n2-char}
and the following well-known Littlewood-type character identities for the 
classical groups $\C_n$ and $\B_n$:
\begin{align*}
(x_1\cdots x_n)^m \symp_{2n,(m^n)}(x)
&=\sum_{\substack{\la \textup{ even} \\[1.5pt] \la_1\leq 2m}}
s_{\la}(x) \\
(x_1\cdots x_n)^m \so_{2n+1,(m^n)}(x)
&=\sum_{\substack{\la \\[1.5pt] \la_1\leq 2m}}
s_{\la}(x),
\end{align*}
where $\symp_{2n,\la}$ and $\so_{2n+1,\la}$ are the symplectic and 
odd orthogonal Schur functions (see \eqref{Eq_SchurBC} below),
and where the second identity also allows for half-integer $m$.
These identities have played an important role in the theory of plane 
partitions, see e.g., \cite{Bressoud99,Desarmenien86,Krattenthaler98,Macdonald95,Okada98,Proctor90,Stembridge90,Stembridge90b}.

The map $\exp(-\alpha_i)\mapsto 1$ for all $1\leq i\leq n$ (i.e.,
$x_i\mapsto 1$) is known as the basic specialisation \cite{Kac90}. 
Applied to Theorem~\ref{Thm_Cn-character-formula},
where on the left the Weyl--Kac expression \eqref{Eq_Weyl-Kac}
is used, leads to
the following generalisations of Macdonald's $\C_n^{(1)}$ 
and $\A_{2n}^{(2)}$ (or affine $\BC_n$) eta-function identities 
\cite{Macdonald72}. Let 
\begin{equation}\label{Eq_chi-B}
\chi_{\B}(v):=\prod_{i=1}^n v_i \prod_{1\leq i<j\leq n} (v_i^2-v_j^2),
\qquad\quad
\chi_{\B}(v/w)=\chi_{\B}(v)/\chi_{\B}(w),
\end{equation}
and $(a)_{\infty}=(a;q)_{\infty}=(1-a)(1-aq)(1-aq^2)\cdots$.

\begin{corollary}
Let $m$ be a nonnegative integer and $\rho=(n,\dots,2,1)$ the $\C_n$ 
Weyl vector. Then
\begin{subequations}
\begin{equation}\label{Eq_MD-C}
\frac{1}{(q)_{\infty}^{2n^2+n}}
\sum \chi_{\B}(v/\rho)
q^{\frac{\|v\|^2-\|\rho\|^2}{4(m+n+1)}}
=\sum_{\substack{\la \textup{ even} \\[1.5pt] \la_1\leq 2m}}
q^{\abs{\la}/2} P'_{\la}(\underbrace{1,\dots,1}_{2n \textup{ times}};q),
\end{equation}
where the sum on the left is over $v\in\Z^n$ 
such that $v\equiv \rho\pmod{2m+2n+2}$, and
\begin{multline}\label{Eq_MD-BC}
\frac{1}{(q^{1/2};q^{1/2})_{\infty}^{2n}(q^2;q^2)_{\infty}^{2n}
(q)_{\infty}^{2n^2-3n}}
\sum \chi_{\B}(v/\rho)
q^{\frac{\|v\|^2-\|\rho\|^2}{2(2m+2n+1)}} \\
=\sum_{\substack{\la \\[1.5pt] \la_1\leq 2m}}
q^{\abs{\la}/2} P'_{\la}(\underbrace{1,\dots,1}_{2n \textup{ times}};q),
\end{multline}
\end{subequations}
where the sum on the left is over $v\in\Z^n$ 
such that $v\equiv \rho\pmod{2m+2n+1}$.
\end{corollary}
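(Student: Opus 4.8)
The plan is to obtain both identities directly from Theorem~\ref{Thm_Cn-character-formula} by applying the basic specialisation $\eup^{-\alpha_i}\mapsto 1$ for $1\le i\le n$. Under this map every variable $x_i\mapsto 1$, so each pair $\big(x_i,x_i^{-1}\big)$ contributes two $1$'s and the right-hand sides of \eqref{Eq_Cn-char} and \eqref{Eq_A2n2-char} specialise precisely to the right-hand sides of \eqref{Eq_MD-C} and \eqref{Eq_MD-BC}. All the content therefore lies on the left, where I would replace $\eup^{-\La}\ch V(\La)$ by the Weyl--Kac expression \eqref{Eq_Weyl-Kac} and specialise its numerator and denominator separately, keeping in mind that $q=\eup^{-\delta}$ is held fixed, so that $\eup^{-\alpha_0}\mapsto q^{1/a_0}$ (giving $q$ for $\C_n^{(1)}$ and $q^{1/2}$ for $\A_{2n}^{(2)}$).

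For the denominator I would use the explicit description of the positive roots together with their multiplicities. The $n$ imaginary roots $k\delta$ contribute $(q)_{\infty}^{n}$, while the real roots $\alpha+k\delta$ with $k\ge 1$ run over all $2n^2$ finite roots and contribute $(q)_{\infty}^{2n^2}$; the finite ($k=0$) positive roots specialise to $1-1=0$ and are to be cancelled against the vanishing of the numerator (see below). This yields the prefactor $1/(q)_{\infty}^{2n^2+n}$ in \eqref{Eq_MD-C}. For $\A_{2n}^{(2)}$ the short and long real roots specialise to half-integer and integer powers of $q$, which splits the product into the three pieces $(q^{1/2};q^{1/2})_{\infty}^{2n}$, $(q^2;q^2)_{\infty}^{2n}$ and $(q)_{\infty}^{2n^2-3n}$ appearing in \eqref{Eq_MD-BC}, the exponents again summing to $2n^2+n$.

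For the numerator $\sum_{w\in W}\sgn(w)\,\eup^{w(\La+\rho)-\rho}$ I would use the semidirect-product decomposition $W=W_0\ltimes T$ into the finite Weyl group $W_0$ of signed permutations and the translation group $T$. Writing each translation explicitly as $t_\beta(\mu)=\mu+\langle\mu,K\rangle\beta-\big(\langle\mu,\beta\rangle+\tfrac12\langle\beta,\beta\rangle\langle\mu,K\rangle\big)\delta$, with $K$ the canonical central element, and using that $\La+\rho$ has level $m+n+1$ for $\C_n^{(1)}$ (respectively level $2m+2n+1$ for $\A_{2n}^{(2)}$), the sum over $T$ produces a lattice sum over $v\in\Z^n$ subject to the congruence $v\equiv\rho$, with the Gaussian weight arising from the quadratic term in $t_\beta(\La+\rho)$ evaluated under the invariant form. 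The sum over $W_0$ then antisymmetrises the summand and, after specialisation, collapses to the Vandermonde-type factor $\chi_{\B}(v/\rho)$ of \eqref{Eq_chi-B}, simultaneously cancelling the vanishing $k=0$ factors of the denominator via the finite Weyl denominator formula. Finally, the $-\rho$ shift in the exponent together with the prefactor $\eup^{-\La}$ produces the normalising term $-\|\rho\|^2$, yielding the exponents $(\|v\|^2-\|\rho\|^2)/4(m+n+1)$ and $(\|v\|^2-\|\rho\|^2)/2(2m+2n+1)$ respectively.

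The main obstacle is this numerator computation: faithfully converting the affine Weyl-group sum into the theta-like lattice sum with the correct congruence class and, above all, the correct normalisation of the quadratic exponent, since the two cases carry different factors (the modulus is $2(m+n+1)$ for $\C_n^{(1)}$ but only $2m+2n+1$ for $\A_{2n}^{(2)}$, while the Gaussian denominator is twice the modulus in each case) dictated by the root-length normalisation of the invariant form and the choice of translation lattice $T$. This is in essence Macdonald's derivation of his eta-function identities from the affine denominator formula, but carried out at a general level $m$ rather than at $\La=0$; the bookkeeping of the invariant form, the action of $t_\beta$ on $\La+\rho$, and the identification of the specialised $W_0$-sum with $\chi_{\B}(v/\rho)$ must all be performed consistently with the labelling of Figure~\ref{Fig_1} in order to land on the two stated moduli.
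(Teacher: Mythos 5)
Your proposal is correct and follows exactly the route the paper takes: the corollary is obtained by applying the basic specialisation $x_i\mapsto 1$ to Theorem~\ref{Thm_Cn-character-formula}, with the left-hand side rewritten via the Weyl--Kac formula \eqref{Eq_Weyl-Kac} and the standard Macdonald-type computation (translation part of $W=\overline{W}\ltimes M$ giving the theta-like lattice sum at level $\kappa=m+n+1$, resp.\ $2m+2n+1$, and the finite Weyl-group sum collapsing to $\chi_{\B}(v/\rho)$). Your bookkeeping of the denominator factorisations, the congruence moduli and the Gaussian exponents all matches the paper, which itself treats this specialisation as standard and cites Macdonald and Kac rather than writing it out.
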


Theorem~\ref{Thm_Cn-character-formula} and similar combinatorial 
character formulae such as \eqref{Eq_A2n2-char2} 
(for the $\A_{2n}^{(2)}$-module $V(m\Lambda_n)$)
and \eqref{Eq_twistedD}
(for the $\D_{n+1}^{(2)}$-module $V(2m\Lambda_0)$)
only deal with a restricted set of weight $\Lambda\in P_{+}$.
We believe however that the type of results obtained in this
paper hold more generally.
For example, computer experiments suggest that for $\C_n^{(1)}$ we have
\[
\eup^{-\La_1} \ch V(\La_1)=x_1 \sum_{k=0}^{\infty}
\frac{q^k}{(q)_k}\, Q'_{(2^k1)}\big(x^{\pm};q\big).
\]

\medskip

The remainder of this paper is organised as follows.
In the next section, after reviewing some standard material from the
theory of affine Kac--Moody algebras, we rewrite the Weyl--Kac
formula \eqref{Eq_Weyl-Kac} for 
$\gfrak=\C_n^{(1)}$, $\A_{2n}^{(2)}$ and $\D_{n+1}^{(2)}$ as a sum over
symplectic or odd orthogonal Schur functions.
In Section~\ref{Sec_HL} we use Jing's vertex operators to
prove a new basic hypergeometric formula for modified 
Hall--Littlewood polynomials $P'_{\la}$,
and apply this to obtain a Littlewood-type 
summation formula for modified Hall--Littlewood polynomials. 
We further connect these results with Rogers--Ramanujan and
Nahm--Zagier-type $q$-series.
In Section~\ref{Sec_Cn-Andrews} we employ the Milne--Lilly Bailey lemma 
for the $\C_n$ root system to prove a $\C_n$ analogue of Andrews' well-known
multiple series transformation. 
Then, in Section~\ref{Sec_5}, it is shown that after specialisation,
and a somewhat intricate limiting procedure, one side of the
$\C_n$ Andrews transformation corresponds to 
certain characters in their Weyl--Kac representation. Furthermore,
applying the Littlewood-type summation formula from Section~\ref{Sec_HL}
we show that the other side is expressible in terms of $P'_{\la}$, 
resulting in a proof of our combinatorial character formulas.
In Section~\ref{Sec_dedekind} we provide a compendium to Macdonald's 
famous list of identities for powers of the Dedekind eta-function, extending
his identities for affine $\B_n$, $\C_n$, $\D_n$ and $\BC_n$ to infinite 
families of such identities. 
Finally, in Section~\ref{Sec_Conclusion}, we make some concluding remarks
in response to questions posed by one of the referees.
This includes a brief discussion of an alternative approach 
to combinatorial character identities recently 
developed by Eric Rains and the second author.

\subsection*{Acknowledgements}
We thank both referees for their constructive comments and interesting
questions.

\section{Affine Kac--Moody algebras}\label{Sec_Weyl-Kac}

In order to prove the main results of this paper, such as 
Theorem~\ref{Thm_Cn-character-formula}, we require a simple
rewriting of the Weyl--Kac formula \eqref{Eq_Weyl-Kac}
for $\gfrak$ one of $\C_n^{(1)}$, $\A_{2n}^{(2)}$ and $\D_{n+1}^{(2)}$
in terms of the odd orthogonal and symplectic Schur functions
\cite{Littlewood50}
\begin{subequations}\label{Eq_SchurBC}
\begin{align}
\label{Eq_odd-orthogonal}
\so_{2n+1,\la}(x)&=\frac{\det_{1\leq i,j\leq n} 
\big(x_i^{j-1-\la_j}-x_i^{2n-j+\la_j}\big)}
{\Delta_{\B}(x)}, \\
\symp_{2n,\la}(x)&=\frac{\det_{1\leq i,j\leq n} 
\big(x_i^{j-1-\la_j}-x_i^{2n-j+1+\la_j}\big)}
{\Delta_{\C}(x)}.
\label{Eq_symplectic}
\end{align}
\end{subequations}
Here $\Delta_{\B}$ and $\Delta_{\C}$ are the generalised Vandermonde products
\begin{align*}
\Delta_{\B}(x)&:=\prod_{i=1}^n (1-x_i)
\prod_{1\leq i<j\leq n} (x_i-x_j)(x_ix_j-1) \\
\Delta_{\C}(x)&:=\prod_{i=1}^n (1-x_i^2)
\prod_{1\leq i<j\leq n} (x_i-x_j)(x_ix_j-1).
\end{align*}
In Section~\ref{SubSec_WK} will give the full details of this 
rewrite for $\C_n^{(1)}$ and then state the remaining cases without proof.

First however, we need to recall some basic notions from the general
theory of affine Kac--Moody algebras. For more details and background
material we refer the reader to the monographs by Kac \cite{Kac90} 
and Wakimoto \cite{Wakimoto01}.

\subsection{General definitions and notation}

Let $\gfrak=\gfrak(A)$ be an affine Kac--Moody algebra with
generalised Cartan matrix $A=(a_{ij})_{i,j\in I}$,
$I:=\{0,1,\dots,n\}$. We are primarily interested in $\gfrak$ of type 
$\C_n^{(1)}~(n\geq 1)$, $\A_{2n}^{(2)}~(n\geq 1)$ and 
$\D_{n+1}^{(2)}~(n\geq 2)$, although most of this section
applies to arbitrary type.
Let $\hfrak$ and $\hfrak^{\ast}$ be the $(n+2)$-dimensional Cartan subalgebra 
and its dual. Fix linearly independent elements 
$\alpha_0^{\vee},\dots,\alpha_n^{\vee}$ and 
$\alpha_0,\dots,\alpha_n$ of $\hfrak$ and $\hfrak^{\ast}$,
called simple coroots and simple roots, such that
$\ip{\alpha_i^{\vee}}{\alpha_j}=a_{ij}$.
Extend the above to a basis of $\hfrak$ and $\hfrak^{\ast}$ by choosing the
additional elements $d\in\hfrak$ and $\La_0\in\hfrak^{\ast}$ such that
$\ip{\alpha_i^{\vee}}{\La_0}=\ip{d}{\alpha_i}=\delta_{i,0}$ and 
$\ip{d}{\La_0}=0$.
The marks and comarks (also known as labels and colabels) 
$a_0,\dots,a_n$ and $a_0^{\vee},\dots,a_n^{\vee}$
are positive integers, uniquely determined by 
$\sum_{i\in I} a_{ij}a_j=\sum_{i\in I} a_i^{\vee}a_{ij}=0$
such that 
\[
\gcd(a_0,\dots,a_n)=\gcd(a_0^{\vee},\dots,a_n^{\vee})=1.
\]
The sum of the marks and comarks are known as the
Coxeter and dual Coxeter number respectively,
$h=\sum_{i\in I} a_i$ and $h^{\vee}=\sum_{i\in I} a_i^{\vee}$.
The Dynkin diagrams of the three infinite series of interest
are given in Figure~\ref{Fig_1},
together with a labelling of the vertices by simple roots $\alpha_i$ and
marks $a_i$.

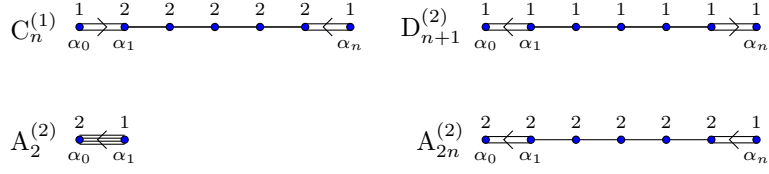
\begin{figure}[th]
\begin{center}
\begin{tikzpicture}[scale=0.6]
%C 
\draw (-1,-2.5) node {$\C_n^{(1)}$};
\draw (0,-2.43)--(1,-2.43);
\draw (0,-2.57)--(1,-2.57);
\draw (0.4,-2.3)--(0.6,-2.5)--(0.4,-2.7);
\draw (1,-2.5)--(5,-2.5);
\draw (1,-2.5)--(5,-2.5);
\draw (5.6,-2.3)--(5.4,-2.5)--(5.6,-2.7);
\draw (5,-2.43)--(6,-2.43);
\draw (5,-2.57)--(6,-2.57);
\foreach \x in {0,...,6} \draw[fill=blue] (\x,-2.5) circle (0.08cm);
\draw (0,-2.9) node {$\sc \alpha_0$};
\draw (1,-2.9) node {$\sc \alpha_1$};
\draw (6,-2.9) node {$\sc \alpha_n$};
\draw (0,-2.1) node {$\sc 1$};
\draw (1,-2.1) node {$\sc 2$};
\draw (2,-2.1) node {$\sc 2$};
\draw (3,-2.1) node {$\sc 2$};
\draw (4,-2.1) node {$\sc 2$};
\draw (5,-2.1) node {$\sc 2$};
\draw (6,-2.1) node {$\sc 1$};
% C dual
\draw (7.8,-2.5) node {$\D_{n+1}^{(2)}$};
\draw (9,-2.43)--(10,-2.43);
\draw (9,-2.57)--(10,-2.57);
\draw (9.6,-2.3)--(9.4,-2.5)--(9.6,-2.7);
\draw (10,-2.5)--(14,-2.5);
\draw (10,-2.5)--(14,-2.5);
\draw (14.4,-2.3)--(14.6,-2.5)--(14.4,-2.7);
\draw (14,-2.43)--(15,-2.43);
\draw (14,-2.57)--(15,-2.57);
\foreach \x in {9,...,15} \draw[fill=blue] (\x,-2.5) circle (0.08cm);
\draw (9,-2.9) node {$\sc \alpha_0$};
\draw (10,-2.9) node {$\sc \alpha_1$};
\draw (15,-2.9) node {$\sc \alpha_n$};
\draw (9,-2.1) node {$\sc 1$};
\draw (10,-2.1) node {$\sc 1$};
\draw (11,-2.1) node {$\sc 1$};
\draw (12,-2.1) node {$\sc 1$};
\draw (13,-2.1) node {$\sc 1$};
\draw (14,-2.1) node {$\sc 1$};
\draw (15,-2.1) node {$\sc 1$};
%twistedAeven
\draw (-1,-5) node {$\A_2^{(2)}$};
\draw (0,-4.9)--(1,-4.9);
\draw (0,-4.966)--(1,-4.966);
\draw (0,-5.033)--(1,-5.033);
\draw (0,-5.1)--(1,-5.1);
\draw (0.6,-4.8)--(0.4,-5)--(0.6,-5.2);
\foreach \x in {0,...,1} \draw[fill=blue] (\x,-5) circle (0.08cm);
\draw (0,-5.4) node {$\sc \alpha_0$};
\draw (1,-5.4) node {$\sc \alpha_1$};
\draw (0,-4.6) node {$\sc 2$};
\draw (1,-4.6) node {$\sc 1$};
%twistedAeven
\draw (8,-5) node {$\A_{2n}^{(2)}$};
\draw (9,-4.93)--(10,-4.93);
\draw (9,-5.07)--(10,-5.07);
\draw (9.6,-4.8)--(9.4,-5)--(9.6,-5.2);
\draw (10,-5)--(14,-5);
\draw (14.6,-4.8)--(14.4,-5)--(14.6,-5.2);
\draw (14,-4.93)--(15,-4.93);
\draw (14,-5.07)--(15,-5.07);
\foreach \x in {9,...,15} \draw[fill=blue] (\x,-5) circle (0.08cm);
\draw (9,-5.4) node {$\sc \alpha_0$};
\draw (10,-5.4) node {$\sc \alpha_1$};
\draw (15,-5.4) node {$\sc \alpha_n$};
\draw (9,-4.6) node {$\sc 2$};
\draw (10,-4.6) node {$\sc 2$};
\draw (11,-4.6) node {$\sc 2$};
\draw (12,-4.6) node {$\sc 2$};
\draw (13,-4.6) node {$\sc 2$};
\draw (14,-4.6) node {$\sc 2$};
\draw (15,-4.6) node {$\sc 1$};
\end{tikzpicture}
\end{center}
\caption{\small The Dynkin diagrams of the three infinite series of
affine Lie algebras of interest, together with a labelling of vertices 
by simple roots and by the marks $a_0,\dots,a_n$. 
$\C_n^{(1)}$ and $\D_{n+1}^{(2)}$ are dual
and the comarks of $\gfrak$ are the marks of its dual.
The comarks of $\A_{2n}^{(2)}$ are its marks read in reverse order.}
\label{Fig_1}
\end{figure}

We now fix what is known as the standard non-degenerate bilinear form 
on $\hfrak$ by setting  
\[
\ipb{\alpha_i^{\vee}}{\alpha_j^{\vee}}=\frac{a_j}{a_j^{\vee}}\,a_{ij},
\qquad 
\ipb{\alpha_i^{\vee}}{d}=a_0\delta_{i,0},\qquad
\ipb{d}{d}=0.
\]
We adopt the natural identification of $\hfrak$ with $\hfrak^{\ast}$
by identifying $d$ with $a_0\La_0$ and $\alpha_i^{\vee}$ with 
$a_i\alpha_i/a_i^{\vee}$.
Then
\[
\ipb{\alpha_i}{\alpha_j}=\frac{a_i^{\vee}}{a_i}\,a_{ij},
\qquad 
\ipb{\alpha_i}{\La_0}=\frac{1}{a_0}\,\delta_{i,0},\qquad
\ipb{\La_0}{\La_0}=0.
\]

Before we turn to the Weyl--Kac formula a few more definitions are needed.
The null root or fundamental imaginary root $\delta$
is defined as $\delta=\sum_{i\in I} a_i\alpha_i$.
Then $\hfrak^{\ast}=\Complex\Lambda_0\oplus\bar{\hfrak}^{\ast}\oplus\Complex\delta$
where $\overline{\hfrak}^{\ast}=\sum_{i\in \bar{I}} \Complex \alpha_i$ for 
$\bar{I}:=\{1,2,\dots,n\}$ is the finite part of $\hfrak^{\ast}$.
We complement $\Lambda_0$ to a full set of 
fundamental weights $\La_0,\dots,\La_n\in\hfrak^{\ast}$ by
\[
\ip{\La_i}{\alpha_j^{\vee}}=\delta_{ij},\qquad
\ip{\La_i}{d}=0.
\]
The Weyl vector $\rho\in\hfrak^{\ast}$ is given by
$\ip{\rho}{\alpha_i^{\vee}}=1$ for all $i\in I$ and $\ip{\rho}{d}=0$. 
If $K$ is the canonical central element
$K=\sum_{i\in I} a_i^{\vee} \alpha_i^{\vee}$ then the level $\lev(\la)$ of
$\la\in\hfrak^{\ast}$ is given by $\lev(\la)=\ip{\la}{K}$. Note that $\lev(\La_0)=1$
and $\lev(\rho)=h^{\vee}$.

The root and coroot lattices $Q$ and $Q^{\vee}$
are defined by the integer span of the
simple roots and simple coroots respectively.
Similarly, $\overline{Q}=\sum_{i\in\bar{I}} \Z \alpha_i$ and 
$\overline{Q}^{\vee}=\sum_{i\in\bar{I}} \Z \alpha^{\vee}_i$.
One further lattice that will play an important role is
\begin{equation}\label{Eq_M-lattice}
M=\begin{cases}
\overline{Q}^{\vee} & \text{if $\gfrak=\mathrm{X}_n^{(1)}$ or $\gfrak=\A_{2n}^{(2)}$}, \\
\overline{Q} & \text{otherwise}.
\end{cases}
\end{equation}
To conclude our string of definitions we let $P_{+}$ denote
the set of dominant integral weights
\[
P_{+}=\{\la\in\hfrak^{\ast}:~\ip{\la}{\alpha_i^{\vee}}\in\NN:~
\text{for all $i\in I$}\},
\]
where throughout this paper, $\NN$ denotes the set of
nonnegative integers.

\subsection{The Weyl--Kac formula}\label{SubSec_WK}

To achieve the desired rewriting of the Weyl--Kac formula
we first follow Kac and Peterson \cite{KP84}.
Let $\overline{W}$ be the finite Weyl group corresponding to the Cartan matrix
$\bar{A}$ obtained from $A$ by deleting the zeroth row and column; 
$\bar{A}=(a_{ij})_{i,j\in\bar{I}}$. 
Then the affine Weyl group $W$ of $\gfrak$ is given by $W=\overline{W}\ltimes M$ 
with $M$ the lattice \eqref{Eq_M-lattice}.
This allows \eqref{Eq_Weyl-Kac} to be restated as
\begin{multline}\label{WK}
\eup^{-\La}\ch V(\La)=
\prod_{\alpha>0}(1-\eup^{-\alpha})^{-\mathrm{mult(\alpha)}}\\
\times \sum_{\gamma\in M}
\sum_{w\in\overline{W}} \sgn(w) 
q^{\frac{1}{2}\kappa \ipb{\gamma}{\gamma}-\ipb{\gamma}{w(\Lab+\rhob)}}
\eup^{-\kappa\gamma+w(\Lab+\rhob)-\Lab-\rhob},
\end{multline}
where $\kappa=\lev(\La+\rho)=\lev(\La)+h^{\vee}$, $q=\exp(-\delta)$
and where $\bar{\lambda}$ again denotes the finite part.

Next we focus on $\gfrak=\C_n^{(1)}$ with generalised Cartan matrix
$A$ given by the tridiagonal matrix with $d_{-1}=(-2,-1,\dots,-1)$,
$d_0=(2,\dots,2)$ and $d_1=(-1,\dots,-1,-2)$.
The set of positive roots $\Delta_{+}$ consists of the disjoint subsets
of positive imaginary and positive real roots, given by
\[
\Delta_{+}^{\text{im}}=\big\{m\delta:~m\in\NN\setminus\{0\}\big\},
\]
each root occurring with multiplicity $n$, and 
\[
\Delta_{+}^{\text{re}}=
\Bigg\{m\delta+\alpha:~\alpha\in\bar{\Delta},\;
m\in\begin{cases} \Z_{+} & \text{if $\alpha\in\bar{\Delta}_{+}$} \\
\Z_{+}\setminus\{0\} & \text{otherwise} \end{cases}
\Bigg\},
\]
of multiplicity $1$. Here $\bar{\Delta}$ is the root system of $\gfrak(\bar{A})$
with base $\overline{\Pi}$.
In terms of the standard Euclidean description\footnote{We 
deviate from Kac's convention that $\ipb{\alpha}{\alpha}=2$
for $\alpha$ a long root. This comes at the cost of introducing the
factor $1/2$ in $\ipb{\epsilon_i}{\epsilon_j}=\delta_{ij}/2$ but avoids
the occurrence of $\sqrt{2}$ in some of our formulae.}
of $\overline{\Pi}$ and
$\bar{\Delta}_{+}=\bar{\Delta}_{s,+}\cup\bar{\Delta}_{\ell,+}$ we have
\[
\overline{\Pi}=\{\alpha_1,\dots,\alpha_n\}=
\{\epsilon_1-\epsilon_2,\dots,\epsilon_{n-1}-\epsilon_n,2\epsilon_n\}
\]
and
\[
\bar{\Delta}_{s,+}=\{\epsilon_i\pm \epsilon_j:~1\leq i<j\leq n\},
\qquad
\bar{\Delta}_{\ell,+}=\{2\epsilon_i:~1\leq i\leq n\}.
\]
Setting $x_i=\exp(-\epsilon_i)$ 
we thus get
\[
\prod_{\alpha>0}(1-\eup^{-\alpha})^{\mult(\alpha)}
=(q)_{\infty}^n\, \Delta_{\C}(x)
\prod_{i=1}^n x_i^{1-i} (qx_i^{\pm 2})_{\infty}
\prod_{1\leq i<j\leq n} (qx_i^{\pm}x_j^{\pm})_{\infty},
\]
where $(au^{\pm})_{\infty}=(au,au^{-1})_{\infty}$ and
$(au^{\pm}v^{\pm})_{\infty}=
(auv,auv^{-1},au^{-1}v,au^{-1}v^{-1})_{\infty}$
for
$(a_1,\dots,a_k)_{\infty}=(a_1)_{\infty}\cdots(a_k)_{\infty}$.

Next we consider the numerator of \eqref{WK}.
The lattice $M=\overline{Q}^{\vee}$ is spanned by
\[
2\{\epsilon_1-\epsilon_2,\dots,\epsilon_{n-1}-\epsilon_n,\epsilon_n\},
\]
i.e., $M$ is the classical $\B_n$ root lattice scaled by a factor
of two
\[
M=\bigg\{2\sum_{i=1}^n r_i \epsilon_i:~(r_1,\dots,r_n)\in\Z^n\bigg\}.
\]
We also use that $\overline{W}$ is the hyperoctahedral
group (or the group of signed permutations) $\overline{W}=
\Symm_n \ltimes (\Z/2\Z)^n$ with natural action on $\Real^n$, see
e.g., \cite{Humphreys78}.
Finally, for $\La=c_0\Lambda_0+\cdots+c_n\Lambda_n\in P_{+}$
define the partition $\la=(\la_1,\dots,\la_n)$ by
$\la_i=c_i+\cdots+c_n$.
Hence, since $\bar{\La}_i=\epsilon_1+\cdots+\epsilon_i$,
we have $\bar{\La}+\bar{\rho}=\sum_{i=1}^n (\la_i+\rho_i)\epsilon_i$,
where $\rho_i:=n-i+1$.
Also note that
\[
\kappa=\sum_{i=0}^n a_i^{\vee}(c_i+1)=h^{\vee}+c_0+\cdots+c_n=n+1+c_0+\la_1.
\]
Therefore, the double sum in \eqref{WK} yields
\begin{multline*}
\sum_{r\in\Z^n} \sum_{w\in\overline{W}} \sgn(w)
\prod_{i=1}^n q^{\kappa r_i^2
-2r_i \sum_{j=1}^n(\la_j+\rho_j)\ipb{\epsilon_i}{w(\epsilon_j)}} 
x_i^{2\kappa r_i+\la_i+\rho_i} w\big(x_i^{-\la_i-\rho_i}\big) \\
=\sum_{r\in\Z^n}\prod_{i=1}^n q^{\kappa r_i^2} x_i^{2\kappa r_i+\la_i+\rho_i}
\sum_{w\in\overline{W}} \sgn(w) w\Big(\prod_{i=1}^n y_i^{-\la_i-\rho_i}\Big),
\end{multline*}
where $y_i:=x_iq^{r_i}$.
By \eqref{Eq_symplectic} the sum over $\overline{W}$ is given by
\[
\Delta_{\C}(y) \symp_{2n,\la}(y) \prod_{i=1}^n y_i^{-n}
\]
so that we obtain the next lemma.
\begin{lemma}[$\C_n^{(1)}$ character formula]\label{Lem_Cn-WK}
For $q=\exp(-\delta)$, $\la=(\la_1,\dots,\la_n)$ a partition and
\begin{subequations}\label{Eq_Cn-char-La}
\begin{gather}
\La=c_0\La_0+(\la_1-\la_2)\La_1+\cdots+(\la_{n-1}-\la_n)\La_{n-1}+\la_n\La_n
\in P_+, \\[2mm]
x_i=\eup^{-\alpha_i-\cdots-\alpha_{n-1}-\alpha_n/2},
\end{gather}
\end{subequations}
we have
\begin{multline}\label{Eq_char-Cn}
\eup^{-\La} \ch V(\La)=
\frac{1}{(q)_{\infty}^n\prod_{i=1}^n (qx_i^{\pm 2})_{\infty}
\prod_{1\leq i<j\leq n} (qx_i^{\pm}x_j^{\pm})_{\infty}} \\[2mm]
\times
\sum_{r\in\Z^n}\frac{\Delta_{\C}(xq^r)}{\Delta_{\C}(x)}
\prod_{i=1}^n q^{\kappa r_i^2-nr_i} x_i^{2\kappa r_i+\la_i} 
\symp_{2n,\la}\big(xq^r\big),
\end{multline}
where
$\kappa=n+1+c_0+\la_1$.
\end{lemma}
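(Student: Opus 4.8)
The plan is to specialise the general character rewriting in the Weyl--Kac formula \eqref{WK} to the case $\gfrak=\C_n^{(1)}$, carrying out explicitly the denominator computation and the evaluation of the numerator double sum. The two ingredients are already assembled in the excerpt: the explicit description of the positive roots (with their multiplicities) and the identification of $M=\overline{Q}^{\vee}$ as the doubled $\B_n$ root lattice, together with $\overline{W}=\Symm_n\ltimes(\Z/2\Z)^n$ acting by signed permutations on $\Real^n$. So the proof is essentially an organised bookkeeping exercise rather than a place where a new idea is needed.

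First I would treat the \emph{denominator}. Writing $x_i=\eup^{-\epsilon_i}$, the positive imaginary roots $m\delta$ ($m\geq 1$), each of multiplicity $n$, contribute $(q)_\infty^{\,n}$. The positive real roots split according to whether $\alpha\in\bar\Delta$ is $\pm(\epsilon_i-\epsilon_j)$, $\pm(\epsilon_i+\epsilon_j)$, or $\pm 2\epsilon_i$, and whether the shift $m$ ranges over $\NN$ or $\NN\setminus\{0\}$. Collecting these products, using the shorthands $(au^{\pm})_\infty$ and $(au^{\pm}v^{\pm})_\infty$, reproduces the claimed factor
\[
(q)_{\infty}^n\,\Delta_{\C}(x)
\prod_{i=1}^n x_i^{1-i}\,(qx_i^{\pm 2})_{\infty}
\prod_{1\leq i<j\leq n}(qx_i^{\pm}x_j^{\pm})_{\infty};
\]
the $\Delta_{\C}(x)\prod_i x_i^{1-i}$ piece arises precisely from the $m=0$ contributions of the finite positive roots, which is why it matches the symplectic Vandermonde in \eqref{Eq_symplectic}.

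Next I would handle the \emph{numerator}. Here one substitutes $\gamma=2\sum_i r_i\epsilon_i\in M$ into the double sum of \eqref{WK}, uses $\ipb{\epsilon_i}{\epsilon_j}=\delta_{ij}/2$ to reduce $\tfrac12\kappa\ipb{\gamma}{\gamma}$ to $\kappa\sum_i r_i^2$, and factors the exponent so that the $w\in\overline{W}$ sum can be pulled inside. The key algebraic move is that, after setting $y_i=x_iq^{r_i}$, the sum $\sum_{w\in\overline{W}}\sgn(w)\,w\big(\prod_i y_i^{-\la_i-\rho_i}\big)$ is, by the Weyl-group antisymmetrisation, exactly the numerator determinant in the definition \eqref{Eq_symplectic} of $\symp_{2n,\la}$; hence it equals $\Delta_{\C}(y)\,\symp_{2n,\la}(y)\prod_i y_i^{-n}$. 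The only subtlety is matching $\rho_i=n-i+1$ against the exponents $2n-j+1+\la_j$ and $j-1-\la_j$ in \eqref{Eq_symplectic} and tracking the extra factor $\prod_i y_i^{-n}$; this is also where the shift $2\kappa r_i$ in the exponent of $x_i$ and the correction $q^{-nr_i}$ (coming from $\Delta_{\C}(xq^r)/\Delta_{\C}(x)$ and the $y_i^{-n}$) must be reconciled. Finally I would combine numerator and denominator, verify $\kappa=h^\vee+\sum_i c_i=n+1+c_0+\la_1$ from $h^\vee=n+1$ and $a_i^\vee$ for $\C_n^{(1)}$, and confirm the dictionary $\La=c_0\La_0+\sum_{i\geq 1}(\la_i-\la_{i+1})\La_i$ with $\la_i=c_i+\cdots+c_n$.

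The step I expect to be the main obstacle is the numerator manipulation: correctly isolating the $w$-dependence so that the antisymmetrisation collapses into the symplectic determinant, while keeping careful track of the power-of-$q$ and power-of-$x$ prefactors that get shifted by $r$. The denominator and the $\kappa$-computation are routine, but the bookkeeping in the $\Delta_{\C}(xq^r)/\Delta_{\C}(x)$ ratio and the $\prod_i y_i^{-n}$ normalisation is where a sign or exponent error is easiest to make, so I would verify that step most carefully, ideally checking it against small $n$ before writing the general case.
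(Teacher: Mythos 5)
Your proposal is correct and follows essentially the same route as the paper: the authors likewise start from the Kac--Peterson form \eqref{WK}, compute the denominator from the explicit list of positive real and imaginary roots to get $(q)_{\infty}^n\,\Delta_{\C}(x)\prod_i x_i^{1-i}(qx_i^{\pm2})_{\infty}\prod_{i<j}(qx_i^{\pm}x_j^{\pm})_{\infty}$, parametrise $M=\overline{Q}^{\vee}$ as the doubled $\B_n$ lattice $\gamma=2\sum_i r_i\epsilon_i$, and collapse the signed-permutation sum over $\overline{W}$ into $\Delta_{\C}(y)\,\symp_{2n,\la}(y)\prod_i y_i^{-n}$ with $y_i=x_iq^{r_i}$ via \eqref{Eq_symplectic}. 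The bookkeeping you flag as the delicate point (the $q^{-nr_i}$ and $x_i^{2\kappa r_i+\la_i}$ prefactors coming from $\prod_i y_i^{-n}$ and the lattice substitution) is exactly where the paper's computation does its work, so no further comment is needed.
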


In much the same way we can rewrite the other characters of interest.

\begin{lemma}[$\A_{2n}^{(2)}$ character formula, I]\label{Lem_A2n2-WK-1}
With the same assumptions as in Lemma~\ref{Lem_Cn-WK},
\begin{multline}\label{Eq_I}
\eup^{-\Lambda} \ch V(\Lambda)= 
\frac{1}{(q)_{\infty}^n \prod_{i=1}^n 
(q^{1/2}x_i^{\pm})_{\infty}(q^2x_i^{\pm 2};q^2)_{\infty}
\prod_{1\leq i<j\leq n}(qx_i^{\pm}x_j^{\pm})_{\infty}} \\[2mm]
\times \sum_{r\in\Z^n}
\frac{\Delta_{\C}(xq^r)}{\Delta_{\C}(x)}
\prod_{i=1}^n q^{\frac{1}{2}\kappa r_i^2-nr_i} x_i^{\kappa r_i+\la_i} \symp_{2n,\la}(xq^r),
\end{multline}
where $\kappa=2n+1+c_0+2\la_1$.
\end{lemma}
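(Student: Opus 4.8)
The plan is to repeat the derivation of Lemma~\ref{Lem_Cn-WK} essentially line by line, feeding in the structure constants of $\A_{2n}^{(2)}$ in place of those of $\C_n^{(1)}$. As before the starting point is the rewriting \eqref{WK} of the Weyl--Kac formula, so the work splits into evaluating the denominator $\prod_{\alpha>0}(1-\eup^{-\alpha})^{\mult(\alpha)}$ and reducing the double sum over $M\times\overline{W}$ to a single sum of symplectic Schur functions. Since the finite part $\gfrak(\bar{A})$ of $\A_{2n}^{(2)}$ is again of type $\C_n$, the group $\overline{W}$ is the hyperoctahedral group, $\symp_{2n,\la}$ and $\Delta_{\C}$ are the relevant finite-dimensional objects, and $\bar{\La}+\bar{\rho}=\sum_{i=1}^n(\la_i+\rho_i)\epsilon_i$ with $\rho_i=n-i+1$. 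The one structural change is the normalisation of the standard form: from the colabels $(a_0^{\vee},\dots,a_n^{\vee})=(1,2,\dots,2)$ and $\ipb{\alpha_i}{\alpha_j}=(a_i^{\vee}/a_i)a_{ij}$ one finds $\ipb{\epsilon_i}{\epsilon_j}=\delta_{ij}$ (rather than $\delta_{ij}/2$ as for $\C_n^{(1)}$), so that by \eqref{Eq_M-lattice} the lattice $M=\overline{Q}^{\vee}=\big\{\sum_{i=1}^n r_i\epsilon_i:r\in\Z^n\big\}=\Z^n$ is the unscaled integer lattice. The same colabels give $h^{\vee}=2n+1$ and hence $\kappa=\lev(\La)+h^{\vee}=2n+1+c_0+2\la_1$.

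For the denominator, the positive imaginary roots $m\delta$ ($m\geq1$) again have multiplicity $n$ and contribute $(q)_{\infty}^n$. The positive real roots, however, must be read off from the twisted root system of $\A_{2n}^{(2)}$: organised by length, they are the medium roots $\pm\epsilon_i\pm\epsilon_j+k\delta$ ($i<j$, $k\geq1$), the long roots $\pm2\epsilon_i+k\delta$ with $k$ a positive even integer, and the short roots $\pm\epsilon_i+k\delta$ with $k$ a positive half-integer, each of multiplicity one. With $x_i=\eup^{-\epsilon_i}$ and $q=\eup^{-\delta}$ these three families produce respectively $\prod_{i<j}(qx_i^{\pm}x_j^{\pm})_{\infty}$, $\prod_i(q^2x_i^{\pm2};q^2)_{\infty}$ and $\prod_i(q^{1/2}x_i^{\pm})_{\infty}$, while the $\delta$-shift-zero finite roots assemble into $\Delta_{\C}(x)\prod_i x_i^{1-i}$ exactly as in Lemma~\ref{Lem_Cn-WK}. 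Multiplying these gives the denominator of \eqref{Eq_I}. I expect this enumeration---in particular verifying that the long roots occur only at even, and the short roots only at half-integer, multiples of $\delta$---to be the one genuinely $\A_{2n}^{(2)}$-specific step and hence the main obstacle; it is precisely what replaces the single base-$q$ factor $(qx_i^{\pm2})_{\infty}$ of the $\C_n^{(1)}$ formula by the pair $(q^{1/2}x_i^{\pm})_{\infty}(q^2x_i^{\pm2};q^2)_{\infty}$.

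For the numerator, substitute $\gamma=\sum_i r_i\epsilon_i\in M$ into the double sum of \eqref{WK} and expand $\ipb{\gamma}{\gamma}=\sum_i r_i^2$ together with $\ipb{\gamma}{w(\bar{\La}+\bar{\rho})}=\sum_{i,j}r_i(\la_j+\rho_j)\ipb{\epsilon_i}{w(\epsilon_j)}$. The cross term is absorbed---just as in the proof of Lemma~\ref{Lem_Cn-WK}---by the substitution $y_i:=x_iq^{r_i}$ in the inner sum; since it pairs a coroot-lattice vector against a weight it is insensitive to the rescaling of the form, so $y_i$ and the residual $\overline{W}$-sum keep the same shape as for $\C_n^{(1)}$, whereas the pure quadratic and linear contributions now read $q^{\frac12\kappa r_i^2}$ and $x_i^{\kappa r_i}$ because $M=\Z^n$ carries the unscaled form $\ipb{\epsilon_i}{\epsilon_j}=\delta_{ij}$. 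Evaluating the inner alternating sum over the hyperoctahedral group by \eqref{Eq_symplectic} yields $\Delta_{\C}(y)\symp_{2n,\la}(y)\prod_i y_i^{-n}$ with $y=xq^r$; dividing by $\Delta_{\C}(x)$ and combining with the denominator of the previous step produces the stated formula \eqref{Eq_I} with $\kappa=2n+1+c_0+2\la_1$. Apart from the root enumeration every step is a transcription of the $\C_n^{(1)}$ computation with the constants $\ipb{\epsilon_i}{\epsilon_j}$, $M$ and $h^{\vee}$ updated, so no new difficulty is anticipated beyond careful bookkeeping of the $q^{1/2}$ and $q^2$ bases.
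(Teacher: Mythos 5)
Your proposal is correct and follows exactly the route the paper intends: the paper works out the $\C_n^{(1)}$ case in full and states Lemma~\ref{Lem_A2n2-WK-1} "in much the same way" without further details, and your transcription supplies precisely those details — the colabels $(1,2,\dots,2)$ giving $h^{\vee}=2n+1$ and $\ipb{\epsilon_i}{\epsilon_j}=\delta_{ij}$, the unscaled lattice $M=\overline{Q}^{\vee}=\Z^n$ producing $q^{\frac12\kappa r_i^2}x_i^{\kappa r_i}$, and the $\A_{2n}^{(2)}$ real-root enumeration yielding the factors $(q^{1/2}x_i^{\pm})_{\infty}(q^2x_i^{\pm 2};q^2)_{\infty}$. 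All the constants check out, so there is nothing to add.
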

Viewing the Dynkin diagram of $\A_{2n}^{(2)}$ in a mirror 
leads to an alternative, $\B$-type expression for the above character.
\begin{lemma}[$\A_{2n}^{(2)}$ character formula, II]\label{Lem_A2n2-WK-2}
For $q=\exp(-\delta)$, $\mu=(\mu_1,\dots,\mu_n)$ a partition or 
half-partition, and
\begin{gather*}
\La=2\mu_n\La_0+(\mu_{n-1}-\mu_n)\La_1+\cdots+(\mu_1-\mu_2)\La_{n-1}+c_n\La_n
\in P_+, \\[2mm]
y_i=\eup^{-\alpha_0-\cdots-\alpha_{n-i}},
\end{gather*}
\textup{(}so that $y_i=q^{1/2}x_{n-i+1}^{-1}$ and 
$\mu_i=c_0/2+\la_1-\la_{n-i+1}$ compared to \eqref{Eq_I}\textup{)},
\begin{multline*}
\eup^{-\Lambda} \ch V(\Lambda)= 
\frac{1}{(q)_{\infty}^n \prod_{i=1}^n 
(qy_i^{\pm})_{\infty}(qy_i^{\pm 2};q^2)_{\infty}
\prod_{1\leq i<j\leq n}(qy_i^{\pm}y_j^{\pm})_{\infty}} \\[2mm]
\times \sum_{r\in\Z^n}
\frac{\Delta_{\B}(yq^r)}{\Delta_{\B}(y)}
\prod_{i=1}^n q^{\frac{1}{2}\kappa r_i^2-(n-\frac{1}{2})r_i} 
y_i^{\kappa r_i+\mu_i} \so_{2n+1,\mu}(yq^r),
\end{multline*}
where $\kappa=2n+1+2c_n+2\mu_1$.
\end{lemma}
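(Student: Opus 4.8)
The starting point is the observation that \eqref{Eq_I} is an identity of formal expressions in the algebraically independent variables $x_1,\dots,x_n$ and $q=\eup^{-\delta}$, and that the assertion to be proved describes the \emph{same} character $\eup^{-\La}\ch V(\La)$ in the mirror-image coordinates $y_i=\eup^{-\alpha_0-\cdots-\alpha_{n-i}}=q^{1/2}x_{n-i+1}^{-1}$. Indeed, with $\mu_i=c_0/2+\la_1-\la_{n-i+1}$ one checks directly that $2\mu_n=c_0$, $\mu_{n-i}-\mu_{n-i+1}=\la_i-\la_{i+1}$ and $c_n=\la_n$, so the weight $\La$ in the statement coincides with the one attached to \eqref{Eq_I}. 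The plan is therefore to substitute $x_i=q^{1/2}y_{n-i+1}^{-1}$ into the right-hand side of \eqref{Eq_I} and show that, after reindexing the lattice sum, it becomes exactly the claimed $\B$-type expression. This amounts to an identity exchanging the type-$\C$ and type-$\B$ descriptions of $\A_{2n}^{(2)}$, reflecting the fact --- visible in Figure~\ref{Fig_1} --- that the diagram of $\A_{2n}^{(2)}$ is palindromic with colabels equal to its labels read in reverse.

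I would carry this out factor by factor. The infinite product in the prefactor transforms by the elementary rules $(q^{1/2}x^{\pm})_{\infty}\mapsto(qy^{\pm})_{\infty}$, $(q^2x^{\pm2};q^2)_{\infty}\mapsto(qy^{\pm2};q^2)_{\infty}$ and $(qx_i^{\pm}x_j^{\pm})_{\infty}\mapsto(qy_i^{\pm}y_j^{\pm})_{\infty}$ forced by $x=q^{1/2}/y$. The essential identity concerns the determinantal numerators $\Delta_{\C}(z)\symp_{2n,\la}(z)$ and $\Delta_{\B}(z)\so_{2n+1,\mu}(z)$ of \eqref{Eq_SchurBC}. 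Writing these as $\prod_i z_i^{n}\det_{1\le i,j\le n}\big(z_i^{-(\la_j+\rho_j^{\C})}-z_i^{\la_j+\rho_j^{\C}}\big)$ and $\prod_i z_i^{\,n-1/2}\det_{1\le i,j\le n}\big(z_i^{-(\mu_j+\rho_j^{\B})}-z_i^{\mu_j+\rho_j^{\B}}\big)$ with $\rho_j^{\C}=n-j+1$ and $\rho_j^{\B}=n-j+\tfrac12$, I would show that the simultaneous inversion $z_i=q^{1/2}/w_{n+1-i}$ and reversal of the column index $j\mapsto n+1-j$ carries the symplectic determinant (in the argument $z=xq^r$) into the orthogonal one (in the argument $w=yq^s$ with the reindexed variable $s_j=-r_{n+1-j}$), precisely when the exponents transform as $\la_j+\rho_j^{\C}\mapsto\mu_j+\rho_j^{\B}$, i.e.\ $\mu_i=c_0/2+\la_1-\la_{n-i+1}$. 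The $r$-independent factor $\Delta_{\C}(x)/\Delta_{\B}(y)$ left over from the normalisation, together with the powers of $q$ and $y$ produced by the substitution and the reversal, would then be reconciled with the transformed prefactor and the monomial $x_i^{\kappa r_i+\la_i}$. Finally the Gaussian $q^{\frac12\kappa r_i^2-nr_i}x_i^{\kappa r_i+\la_i}$ should become $q^{\frac12\kappa r_i^2-(n-\frac12)r_i}y_i^{\kappa r_i+\mu_i}$, the change $-n\mapsto-(n-\tfrac12)$ being supplied by the $q^{1/2}$ carried by each $x_i$, and $\kappa=2n+1+c_0+2\la_1=2n+1+2c_n+2\mu_1$ follows from $c_n=\la_n$.

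I expect the crux to be the determinantal step: one must perform the simultaneous inversion $z\mapsto q^{1/2}/w$ and reversal of rows and columns inside \eqref{Eq_symplectic}, carefully tracking the half-integer Weyl-vector shift $\rho^{\C}\to\rho^{\B}$ introduced by the $q^{1/2}$ and the antisymmetrisation signs, and then recognise the outcome as \eqref{Eq_odd-orthogonal}; keeping the normalisation factor $\Delta_{\C}(x)/\Delta_{\B}(y)$ and all stray powers of $q$ and $y$ in step with the prefactor is where errors are most likely. The half-partition case ($c_0$ odd, so $\mu_n=c_0/2\notin\Z$) needs no separate argument, since $\so_{2n+1,\mu}$ is defined verbatim for half-integer $\mu$. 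As an independent check --- and an alternative that does not invoke Lemma~\ref{Lem_A2n2-WK-1} --- one could instead rerun the $\C_n^{(1)}$ derivation of Lemma~\ref{Lem_Cn-WK} starting from \eqref{WK}, now expressing $\bar\La+\bar\rho$ and the sum over the hyperoctahedral group $\overline W$ in the $y$-coordinates and evaluating it by \eqref{Eq_odd-orthogonal} rather than \eqref{Eq_symplectic}; the two routes differ only in organisation, and either yields the asserted formula.
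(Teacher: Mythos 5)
Your parameter bookkeeping is correct ($2\mu_n=c_0$, $\mu_{n-i}-\mu_{n-i+1}=\la_i-\la_{i+1}$, $c_n=\la_n$, and the two expressions for $\kappa$ agree), and your fallback route --- rerunning the Kac--Peterson rewriting of \eqref{WK} with the finite part of type $\B_n$ obtained by deleting the node $\alpha_n$ instead of $\alpha_0$, so that the $\overline{W}$-sum is evaluated by \eqref{Eq_odd-orthogonal} --- is exactly what the paper means by ``viewing the Dynkin diagram in a mirror''; that route is sound. The gap is in your primary route. Its central claim, that the inversion $z_i=q^{1/2}/w_{n+1-i}$ together with reversal of rows and columns carries $\Delta_{\C}(xq^r)\symp_{2n,\la}(xq^r)$ at a \emph{fixed} lattice point $r$ into $\Delta_{\B}(yq^s)\so_{2n+1,\mu}(yq^s)$ at the fixed point $s_j=-r_{n+1-j}$, is false. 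Writing the symplectic numerator as $\prod_iz_i^n\det_{i,j}\big(z_i^{-\ell_j}-z_i^{\ell_j}\big)$ with $\ell_j=\la_j+n-j+1$, the substitution turns the $(i,j)$ entry into $q^{-\ell_j/2}w_{n+1-i}^{\ell_j}-q^{\ell_j/2}w_{n+1-i}^{-\ell_j}$: the two halves of each entry acquire \emph{different} powers of $q$, which cannot be extracted from a row or a column. Already for $n=1$, $z^{-\la_1}-z^{\la_1+2}$ at $z=q^{1/2+r}y^{-1}$ involves the powers $y^{\la_1}$ and $y^{-\la_1-2}$, whereas $w^{-\mu_1}-w^{\mu_1+1}$ at $w=yq^{-r}$ involves $y^{-c_0/2}$ and $y^{c_0/2+1}$, so the two summands are not proportional for generic $\la_1,c_0$. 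The correct exponent map is the \emph{affine} reflection $\ell_{n+1-j}\mapsto\mu_j+\rho_j^{\B}=\kappa/2-\ell_{n+1-j}$, and absorbing the stray factors $q^{\mp\ell_j/2}$ forces a shift of the lattice variable that depends on the sign vector $\epsilon\in\{\pm1\}^n$ of the hyperoctahedral term being matched (for $n=1$ the $\epsilon=+1$ half requires $s\mapsto s+1$ while the $\epsilon=-1$ half requires none). Hence the identity holds only for the full double sum over $\Z^n\times\overline{W}$ after expanding both determinants into their $2^n n!$ terms and re-bracketing; individual $r$-summands do not map to individual $s$-summands.

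Once this re-bracketing is acknowledged, your ``direct substitution'' is really the statement that the affine Weyl group admits two semidirect decompositions $\overline{W}\ltimes M$ attached to the two ends of the diagram, with the translation part and the finite part mixed by the change of decomposition --- which is to say it collapses into your alternative derivation from \eqref{WK}. I would therefore present the argument that way from the outset, exactly as the paper does for Lemma~\ref{Lem_Cn-WK} (this is all the paper itself offers for the present lemma: ``in much the same way'', with the $\B$-type data replacing the $\C$-type data), rather than trying to salvage a fixed-index correspondence between the two determinantal lattice sums. The remaining bookkeeping in your plan (the finite factors such as $(1-y_j)$ and $(1-qy_j^{-2})$ left over from converting $(q^{1/2}x_i^{\pm})_{\infty}(q^2x_i^{\pm2};q^2)_{\infty}$ into $(qy_j^{\pm})_{\infty}(qy_j^{\pm2};q^2)_{\infty}$, against $\Delta_{\C}(x)/\Delta_{\B}(y)$) does work out, but only after the numerators have been matched globally as above.
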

Here a half-partition $\mu=(\mu_1,\mu_2,\dots,\mu_n)$ is a sequence of 
weakly decreasing positive numbers such that $\mu_i+1/2\in\Z$ for all $i$.

\begin{lemma}[$\D_{n+1}^{(2)}$ character formula]\label{Lem_Dn2-WK}
For $q=\exp(-\delta)$, $\la=(\la_1,\dots,\la_n)$ a partition 
or half-partition, and
\begin{subequations}\label{Eq_Bn-char}
\begin{gather}
\La=c_0\La_0+(\la_1-\la_2)\La_1+\cdots+(\la_{n-1}-\la_n)\La_{n-1}+2\la_n\La_n
\in P_+, \\[2mm]
x_i=\eup^{-\alpha_i-\cdots-\alpha_n},
\end{gather}
\end{subequations}
we have
\begin{multline*}
\eup^{-\Lambda} \ch V(\Lambda)=
\frac{1}{(q^2;q^2)_{\infty}^{n-1}(q)_{\infty}
\prod_{i=1}^n (qx_i^{\pm})_{\infty}
\prod_{1\leq i<j\leq n} (q^2x_i^{\pm}x_j^{\pm};q^2)_{\infty}} \\[2mm]
\times \sum_{r\in\Z^n} \frac{\Delta_{\B}(xq^{2r})}{\Delta_{\B}(x)}
\prod_{i=1}^n q^{\kappa r_i^2-(2n-1)r_i} 
x_i^{\kappa r_i+\la_i} \so_{2n+1,\la}\big(xq^{2r}\big),
\end{multline*}
where $\kappa=2n+c_0+2\la_1$.
\end{lemma}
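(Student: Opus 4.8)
The plan is to repeat, \emph{mutatis mutandis}, the derivation of the $\C_n^{(1)}$ character formula in Lemma~\ref{Lem_Cn-WK}: factor the Weyl--Kac formula in the form \eqref{WK}, evaluate the product over positive roots to produce the denominator, identify $M$ and $\overline{W}$ explicitly, and antisymmetrise the finite Weyl-group sum into an odd orthogonal Schur function by means of \eqref{Eq_odd-orthogonal}. Throughout I work in the Euclidean model in which the finite root system $\bar\Delta$ of $\D_{n+1}^{(2)}$ is of type $\B_n$, with short roots $\pm\epsilon_i$ and long roots $\pm\epsilon_i\pm\epsilon_j$, so that $x_i=\eup^{-\epsilon_i}=\eup^{-\alpha_i-\cdots-\alpha_n}$ as in the statement.

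First I would assemble the positive-root data. The essential point, and the place where $\D_{n+1}^{(2)}$ departs from the untwisted case, is that the imaginary roots $m\delta$ no longer have constant multiplicity: one has $\mult(m\delta)=n$ for even $m$ and $\mult(m\delta)=1$ for odd $m$, so that $\prod_{m\ge 1}(1-q^m)^{\mult(m\delta)}=(q)_{\infty}(q^2;q^2)_{\infty}^{n-1}$. The real roots, each of multiplicity one, comprise the short roots $\pm\epsilon_i$ shifted by every $m\delta$ ($m\ge 1$, both signs, base $q$) and the long roots $\pm\epsilon_i\pm\epsilon_j$ shifted only by \emph{even} multiples $2m\delta$ (base $q^2$); together with the finite ($m=0$) roots these yield $\prod_i(qx_i^{\pm})_{\infty}$, $\prod_{i<j}(q^2x_i^{\pm}x_j^{\pm};q^2)_{\infty}$ and $\Delta_{\B}(x)$ up to a monomial prefactor. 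Multiplying these reproduces the denominator displayed in the statement.

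Next comes the numerator. By \eqref{Eq_M-lattice} we have $M=\overline{Q}$, the $\B_n$ root lattice, which in Euclidean coordinates is $\{\gamma=\sum_i r_i\epsilon_i:r\in\Z^n\}$, and $\overline{W}$ is again the hyperoctahedral group acting by signed permutations. Writing $\La=c_0\La_0+\sum_{i=1}^{n-1}(\la_i-\la_{i+1})\La_i+2\la_n\La_n$ gives $\bar\La+\bar\rho=\sum_i(\la_i+\rho_i)\epsilon_i$ with the $\B_n$ Weyl vector $\rho_i=n-i+\tfrac12$ (the half-integer shift is exactly what permits $\la$ to be a half-partition, the corresponding $\so_{2n+1,\la}$ then being a spin character), and $\kappa=\lev(\La+\rho)=h^{\vee}+\lev(\La)=2n+c_0+2\la_1$. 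Inserting $\gamma=\sum_i r_i\epsilon_i$ into the double sum of \eqref{WK} and setting $y_i:=x_iq^{2r_i}$, the quadratic and exponential terms produce $\prod_i q^{\kappa r_i^2}x_i^{\kappa r_i+\la_i}$, while the residual sum over $\overline{W}$ is by \eqref{Eq_odd-orthogonal} equal to $\Delta_{\B}(y)\,\so_{2n+1,\la}(y)\prod_i y_i^{-(n-\frac12)}$; the monomial $\prod_i y_i^{-(n-\frac12)}$ supplies precisely the factor $q^{-(2n-1)r_i}$. Collecting everything and dividing by the denominator yields the claimed identity.

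The main obstacle is the twisted bookkeeping rather than any single hard estimate. One must normalise the standard bilinear form for this case so that $\tfrac12\kappa(\gamma|\gamma)$ and the cross term $\ipb{\gamma}{w(\bar\La+\bar\rho)}$ generate exactly the exponent $q^{\kappa r_i^2}$ and the \emph{doubled} shifts $y_i=x_iq^{2r_i}$; this factor of two, the parity-dependent imaginary multiplicities, and the base-$q^2$ long-root products are the genuinely new features relative to Lemma~\ref{Lem_Cn-WK}. Two secondary points need care: checking that $\kappa=2n+c_0+2\la_1$ is consistent with the colabels $(a_0^{\vee},\dots,a_n^{\vee})=(1,2,\dots,2,1)$ of $\D_{n+1}^{(2)}$, and verifying that the half-partition (spin) case is correctly covered by the determinantal formula \eqref{Eq_odd-orthogonal}.
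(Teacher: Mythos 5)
Your proposal is correct and follows exactly the route the paper intends: it carries out for $\D_{n+1}^{(2)}$ the same Kac--Peterson rewriting \eqref{WK} that the paper details only for $\C_n^{(1)}$, and you correctly identify all the twisted-case modifications (parity-dependent imaginary multiplicities giving $(q)_{\infty}(q^2;q^2)_{\infty}^{n-1}$, long real roots only at even shifts, $M=\overline{Q}$, the normalisation $\ipb{\epsilon_i}{\epsilon_j}=2\delta_{ij}$ producing the shifts $xq^{2r}$, $\rho_i=n-i+\tfrac12$, and $\kappa=2n+c_0+2\la_1$ from the colabels $(1,2,\dots,2,1)$). Nothing is missing.
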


\section{Modified Hall--Littlewood polynomials}\label{Sec_HL}

\subsection{Preliminaries}

The Hall--Littlewood polynomials are an important family of 
symmetric functions generalising the well-known Schur functions.
Our main interest will be the modified Hall--Littlewood
polynomials, for which we shall give a new, closed-form formula
as a multiple basic hypergeometric series. It is this formula 
that will ultimately allow us to express characters of affine Lie algebras 
in terms of modified Hall--Littlewood polynomials. 

For standard notation and terminology from the theory of partitions and
symmetric functions we refer the reader to \cite{Macdonald95}.

Fix a positive integer $n$.
For a partition $\la$ of length $l(\la)\leq n$
let $m_0(\la)=n-l(\la)$ and $m_i(\la)$ for $i\geq 1$ 
the multiplicity of parts of size $i$.
Define $v_{\la}(q)=\prod_{i\geq 0} (q)_{m_i(\la)}/(1-q)^{m_i(\la)}$.
If $\Symm_n$ denotes the symmetric group on $n$ letters and
$\Symm_n^{\la}$ the stabilizer of $\la$, then 
$v_{\la}(q)$ may be identified as the Poincar\'e polynomial
$\sum_{w\in\Symm_n^{\la}} t^{\ell(w)}$.
For $x=(x_1,\dots,x_n)$ the Hall--Littlewood polynomial 
$P_{\la}$ is the symmetric function \cite{Macdonald95}
\[
P_{\la}(x;q)=\frac{1}{v_{\la}(q)}
\sum_{w\in\Symm_n} 
w\bigg(x^\la\prod_{i<j}\frac{x_i-qx_j}{x_i-x_j}\bigg) 
=\sum_{w\in\Symm_n/\Symm_n^{\la}}
w\bigg(x^\la\prod_{\la_i>\la_j}\frac{x_i-qx_j}{x_i-x_j}\bigg).
\]
Here the symmetric group $\Symm_n$ acts on functions $f(x)$ 
by permuting the $x_i$.

The Hall--Littlewood polynomial $P_{\la}$ interpolates between
the Schur function $s_{\la}$ and the monomial symmetric 
function $m_{\la}$, corresponding to $q=0$ and $q=1$ respectively.
The $P_{\la}$, where $\la$ ranges over all partitions of length at most
$n$, form a basis of the ring of symmetric functions in $n$ variables.
There is a second Hall--Littlewood polynomial defined as
\begin{equation}\label{Eq_PQ}
Q_{\la}(x;q)=b_{\la}(q) P_{\la}(x;q),
\end{equation}
where $b_{\la}(q)=\prod_{i\geq 1} (q)_{m_i(\la)}=\prod_{i\geq 1}
(q)_{\la_i'-\la_{i+1}'}$, for $\la'$ the conjugate of $\la$.

The modified Hall--Littlewood polynomial $Q'_{\la}$ of equation
\eqref{Eq_Qpcharge} is a variant of $Q_{\la}$ which interpolates 
between the Schur function $s_{\la}$, obtained for $q=0$, and 
the complete symmetric function $h_{\la}$, obtained for $q=1$. 
Unlike the literature on the ordinary Hall--Littlewood polynomials, 
where the pair $P_{\la}$ and $Q_{\la}$ are usually given equal 
prominence, the polynomial $P'_{\la}$ defined in \eqref{Eq_Pp}
usually does not feature in work on the modified polynomials, see e.g.,
\cite{DLT94,Garsia92,GP92,Kirillov00,Lascoux05,Milne92}.
There are a number of reasons for this.
$Q'_{\la}$ has coefficients in $\Z[q]$, is Schur positive,
and has several combinatorial, representation theoretic 
and geometric interpretations.
$P'_{\la}$ on the other hand, has coefficients in $\Rat(q)$
and its $q\to 1$ limit does not exist due to $b_{\la}(1)=\delta_{\la,0}$.
Nonetheless, most of our results 
are simplest when expressed in terms of the $P'_{\la}$ and we will
use the two families of modified polynomials interchangeably.

Besides \eqref{Eq_Qpcharge} there exist numerous other descriptions of
the modified Hall--Littlewood polynomials, three of which will
be discussed below.
First of all, using the notation of $\la$-rings \cite{Lascoux01,Haglund08},
\[
Q'_{\la}(x;q)=Q_{\la}(x/(1-q);q)\quad\text{and}\quad
P'_{\la}(x;q)=P_{\la}(x/(1-q);q),
\]
where $x/(1-q)$ is shorthand for the infinite alphabet
obtained from $x$ be replacing each $x_i$ by $x_i,x_iq,x_iq^2,\dots$.
A second description of the modified Hall--Littlewood polynomials
uses the Hall inner product on the ring of symmetric functions,
defined by $\langle s_{\la},s_{\mu}\rangle=\delta_{\la\mu}$.
Then 
\[
\langle P_{\la},Q'_{\mu}\rangle=\langle P'_{\la},Q_{\mu}\rangle=
\delta_{\la\mu}.
\]

Finally, and most important for our purposes, the $Q'_{\la}$ can be
computed using Jing's $q$-Bernstein operators \cite{Jing91}
(see also \cite{Garsia92,Zabrocki00}).
Let $\Lambda$ be the ring of symmetric functions. For $f\in\Lambda$, 
denote by $f^{\perp}\in\End(\La)$ the operator (also 
known as Foulkes derivative) which acts as the adjoint of 
multiplication by $f$:
\[
\ip{f^{\perp}(g)}{h}=\ip{g}{fh} \quad\text{for $g,h\in\Lambda$}.
\]
For $m$ an integer the $q$-Bernstein operator $B_m=B_m(x;q)$ 
is defined as
\[
B_m=\sum_{r,s=0}^{\infty} (-1)^r q^s h_{m+r+s}(x) e_r^{\perp} h_s^{\perp}=
\sum_{r=0}^{\infty} h_{m+r}(x) h_r^{\perp}\big(x(q-1)\big),
\]
where $h_r$ and $e_r$ are the $r$th complete and elementary symmetric 
functions, and where the rightmost expression again uses $\la$-rings.
Alternatively, if $B(z)=B(z;x;q)$ is the vertex operator 
$B(z)=\sum_m z^m B_m$, then 
\[
B(z)(f)=f\Big(x-\frac{1-q}{z}\Big)\prod_{i\geq 1}\frac{1}{1-zx_i}.
\]
For a partition $\la=(\la_1,\dots,\la_k)$ Jing \cite{Jing91} showed that
\begin{equation}\label{Eq_Jing}
Q'_{\la}(x;q)=B_{\la_1}\cdots B_{\la_k}(1),
\end{equation}
or, equivalently, $Q'_0(x;q)=1$ and 
\begin{equation}\label{Eq_Qrec}
Q'_{\nu}(x;q)=B_m\big(Q'_{\la}(x;q)\big),
\end{equation}
where $\nu=(m,\la_1,\la_2,\dots,\la_k)$ for $m\geq \la_1$.
We note that although $B_0$ is not the identity operator, $B_0(1)=1$
so that \eqref{Eq_Jing} is true regardless of whether $l(\la)=k$ or
$l(\la)<k$. In \cite{Garsia92} Garsia expressed the $B_m$ in more 
explicit form as
\begin{equation}\label{Eq_BGarsia}
B_m(x;q)=\sum_{i=1}^n x_i^m 
\Bigg(\prod_{\substack{j=1 \\ j \neq i}}^n \frac{x_i}{x_i-x_j}\Bigg) T_{q,x_i},
\end{equation}
where $(T_{q,x_i} f)(x)=f(x_1,\dots,x_{i-1},qx_i,x_{i+1},\dots,x_n)$.
It is this representation that will be important 
in proving our hypergeometric formula for $P'_{\la}$.

\subsection{The modified Hall--Littlewood polynomial as $q$-hypergeometric 
multisum}

Define the $q$-shifted factorial $(a)_n=(a;q)_n$ indexed by an 
arbitrary integer $n$ as $(a)_n=(a)_{\infty}/(aq^n)_{\infty}$, where 
$(a)_{\infty}=(1-a)(1-aq)\cdots$. 
For $r,s\in\NN^n$, $\tau$ an integer and $x=(x_1,\dots,x_n)$ we
define the $q$-hypergeometric term
\begin{equation}\label{Eq_frs}
f_{r,s}^{(\tau)}(x;q):=
\prod_{i=1}^n \Big(x_i^{r_i} q^{\binom{r_i}{2}} \Big)^{\tau}
\prod_{i,j=1}^n \frac{(qx_i/x_j)_{r_i-r_j}}{(qx_i/x_j)_{r_i-s_j}}.
\end{equation}
Since $1/(q)_n=0$ for $n<0$, it follows that 
$f_{r,s}^{(\tau)}(x;q)=0$ unless $r_i\geq s_i$ 
for all $1\leq i\leq n$, or more succinctly, $r\supseteq s$ 
for $r$ and $s$ viewed as compositions.

\begin{theorem}\label{Thm_Qphyper} 
The modified Hall--Littlewood polynomial $P'_{\la}$ is given by
\begin{equation}\label{Eq_Qphyper}
P'_{\la}(x;q)=\sum \prod_{\ell\geq 1} f_{r^{(\ell)},r^{(\ell+1)}}^{(1)}(x;q),
\end{equation}
where the sum is over 
$r^{(1)}\supseteq r^{(2)}\supseteq \cdots\in\NN^n$ 
such that $\abs{r^{(\ell)}}=\la'_{\ell}$.
\end{theorem}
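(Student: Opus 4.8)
The plan is to prove \eqref{Eq_Qphyper} by induction on the number of columns of $\la$, i.e.\ on $\la_1=l(\la')$, exploiting the fact that the right-hand side is organised column-by-column: the constraint $\abs{r^{(\ell)}}=\la'_{\ell}$ assigns to the $\ell$-th column of $\la$ a composition $r^{(\ell)}\in\NN^n$, and the nesting $r^{(1)}\supseteq r^{(2)}\supseteq\cdots$ records how the columns shorten. The base case is a single column $\la=(1^c)$, where the chain collapses to one nontrivial term $r^{(1)}=r$ with $\abs{r}=c$ (and $r^{(\ell)}=0$, whence $f^{(1)}_{0,0}=1$, for $\ell\ge 2$), so that \eqref{Eq_Qphyper} reduces to a closed-form evaluation of $P'_{(1^c)}(x;q)=e_c\big(x/(1-q)\big)$ (the latter identity following from \eqref{Eq_Pp} since $Q_{(1^c)}=(q)_c\,e_c$). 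This base identity can be checked directly, e.g.\ from the single-variable summation $\sum_{0\le k_1<\cdots<k_r}q^{k_1+\cdots+k_r}=q^{\binom r2}/(q)_r$ together with a Lagrange-interpolation expansion that accounts for the off-diagonal factors $(qx_i/x_j)_{r_i-r_j}$.

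For the inductive step I would peel off the outermost factor. Writing $\mu$ for $\la$ with its first column removed (so $\mu'_{\ell}=\la'_{\ell+1}$ and $c=\la'_1=l(\la)$), the right-hand side of \eqref{Eq_Qphyper} factors as a sum over $r^{(1)}$ with $\abs{r^{(1)}}=c$ of $f^{(1)}_{r^{(1)},r^{(2)}}(x;q)$ times the $\mu$-sum, in which the top composition $r^{(2)}$ is now summed subject to $r^{(2)}\subseteq r^{(1)}$. Invoking the inductive hypothesis to replace the $\mu$-sum by $P'_{\mu}$, the task reduces to a single one-column transfer identity: that convolving the multisum for $P'_{\mu}$ against the kernel $f^{(1)}_{r^{(1)},r^{(2)}}$ and summing over the intermediate composition $r^{(2)}$ yields $P'_{\la}$. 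I would establish this transfer identity from Garsia's operator \eqref{Eq_BGarsia} through Jing's recursion \eqref{Eq_Qrec}, or equivalently from the closed form of the vertex-operator product $B(z_1)\cdots B(z_k)(1)=\prod_{1\le a<b\le k}\frac{z_a-z_b}{z_a-qz_b}\prod_{a,i}(1-z_ax_i)^{-1}$ (obtained by iterating the two-point relation $B(z)B(w)(1)=\frac{z-w}{z-qw}\prod_i[(1-zx_i)(1-wx_i)]^{-1}$), extracting the coefficient of $z_1^{\la_1}\cdots z_k^{\la_k}$ via \eqref{Eq_Jing} and reorganising the Cauchy-type kernel $\prod_{a,i}(1-z_ax_i)^{-1}$ by the power of each $x_i$ occurring in each column.

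The main obstacle is precisely this reconciliation of the row-building operators with the column indexing of \eqref{Eq_Qphyper}: Jing's and Garsia's operators add one part (a row) at a time, whereas the multisum is naturally a column recursion, so the inductive step is a genuine convolution over $r^{(2)}$ rather than a multiplication by a scalar Pieri coefficient. Carrying out this resummation in closed form is the crux, and I expect it to reduce to a multivariable $q$-series evaluation of $\A_{n-1}$ ($q$-Pfaff--Saalsch\"utz / $q$-binomial) type, in which the off-diagonal blocks $\prod_{i,j}(qx_i/x_j)_{r_i-r_j}/(qx_i/x_j)_{r_i-s_j}$ of \eqref{Eq_frs} are produced and the spurious poles at $x_i=x_j$ cancel. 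Two bookkeeping points must also be handled: tracking the normalisation \eqref{Eq_Pp} relating $Q'_{\la}$ and $P'_{\la}=Q'_{\la}/b_{\la}(q)$ under the addition of a column, and the vanishing $f^{(1)}_{r,s}=0$ unless $r\supseteq s$, which guarantees that only nested chains contribute and that the sum in \eqref{Eq_Qphyper} is finite.
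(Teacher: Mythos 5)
There is a genuine gap in the inductive step, and it is structural rather than merely computational. You peel off the first column of $\la$ to get $\mu$ and claim to ``invoke the inductive hypothesis to replace the $\mu$-sum by $P'_{\mu}$,'' but this replacement is not available: the top composition $r^{(2)}$ of the $\mu$-multisum is a shared summation variable, appearing both inside the kernel $f^{(1)}_{r^{(1)},r^{(2)}}$ and as the first link of the chain $r^{(2)}\supseteq r^{(3)}\supseteq\cdots$. The inductive hypothesis only tells you the value of the \emph{total} sum $\sum_{s}H_{\mu}(s)=P'_{\mu}$, where $H_{\mu}(s)$ denotes the partial multisum with top composition fixed at $s$; the convolution $\sum_{r^{(1)},r^{(2)}}f^{(1)}_{r^{(1)},r^{(2)}}H_{\mu}(r^{(2)})$ requires knowing each $H_{\mu}(s)$ individually, which the hypothesis does not provide. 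So either the induction does not close, or (on the charitable reading that you work with the explicit multisum for $\mu$ rather than with $P'_{\mu}$) the ``transfer identity'' is the entire theorem restated and the induction is doing no work. In both readings the crux --- the closed-form resummation over $r^{(2)}$, which you defer to an unspecified ``multivariable $q$-Pfaff--Saalsch\"utz type'' evaluation --- is exactly what is missing. The base case $P'_{(1^c)}=e_c\big(x/(1-q)\big)$ is also a nontrivial $\A_{n-1}$ summation that you assert rather than prove, though that one is at least a known identity (it is the $m=1$, single-column instance of Milne's formula mentioned after Corollary~\ref{Cor_Qphyper2}).

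The underlying difficulty is one you yourself flag: the Bernstein/Garsia operators build $\la$ row by row, and the paper exploits this directly instead of fighting it. Its proof is an induction on \emph{rows}: Jing's recursion \eqref{Eq_Qrec} is rewritten as $B_m(P'_{\la})=(1-q^{\la'_m+1})P'_{\nu}$ for $\nu=(m,\la_1,\dots,\la_k)$, and $B_m$ in Garsia's form \eqref{Eq_BGarsia} is applied to the multisum. The point that makes this work is that prepending a row $m\geq\la_1$ increments \emph{every} column height $\la'_{\ell}$, $1\leq\ell\leq m$, by one, which on the multisum side is realised by the uniform shift $r^{(\ell)}\mapsto r^{(\ell)}+\epsilon_i$ induced by $T_{q,x_i}$; the sum over $i$ in \eqref{Eq_BGarsia} then collapses via the elementary rational identity $\sum_{i=1}^n(1-y_i)\prod_{j\neq i}\frac{x_i-y_jx_j}{x_i-x_j}=1-y_1\cdots y_n$, producing precisely the scalar factor $1-q^{\la'_m+1}$. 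No $q$-Pfaff--Saalsch\"utz-level summation is needed, and no refined partial sums ever enter. If you want to rescue a column-by-column argument you would need to strengthen the inductive hypothesis to an explicit closed form for $H_{\mu}(s)$ for every fixed $s$, or else derive the full multisum from the vertex-operator product $B(z_1)\cdots B(z_k)(1)$ by an honest coefficient extraction --- but that extraction is itself the hard content of the theorem and cannot be waved through as a reorganisation of the Cauchy kernel.
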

Of course, since $\abs{r^{(\ell)}}=0$ for $\ell>l(\la')=\la_1$, all
$r^{(\ell)}$ for $\ell>\la_1$ are equal to $0:=(0^n)$ and the product
$\prod_{\ell\geq 1}$ may be replaced by a finite product $\prod_{\ell=1}^m$
where $m$ is an integer such that $m\geq\la_1$.

\begin{proof}[Proof of Theorem~\ref{Thm_Qphyper}]
Throughout the proof we write $P_{\la}$, $f_{r,s}$ and $b_{\la}$ for
$P_{\la}(x;q)$, $f_{r,s}(x;q)$ and $b_{\la}(q)$.

For $\la=0$ all $r^{(\ell)}$ in \eqref{Eq_Qphyper} are equal to $0$, 
resulting in $P'_0=1$. 

It remains to show that for $\la\neq 0$ our theorem is 
consistent with the action of the $q$-Bernstein operators. 
Before we do so, we translate \eqref{Eq_Qrec} into 
a statement for $P'_{\la}$ instead of $Q'_{\la}$.
First, by \eqref{Eq_Pp}, we get
$b_{\la} B_m\big(P'_{\la}\big)=b_{\nu} P'_{\nu}$.
But, since $\nu=(m,\la_1,\dots,\la_k)$ with $m\geq \la_1$,
we have $b_{\nu}/b_{\la}=(1-q^{\la'_m+1})$.
Hence, for $m\geq 1$,
\begin{equation}\label{Eq_Prec}
B_m\big(P'_{\la}\big)
=\big(1-q^{\la'_m+1}\big) P'_{\nu}.
\end{equation}

We now compute the left-hand
side of \eqref{Eq_Prec} using the claimed expression for $P'_{\la}$.
Let $m$ be an integer such that $m\geq \la_1$.
Recalling the remark after Theorem~\ref{Thm_Qphyper},
we replace the product in \eqref{Eq_Qphyper} by $\prod_{\ell=1}^m$ and
sum over $r^{(1)}\supseteq \cdots\supseteq r^{(m)}\in\NN^n$ 
such that $\abs{r^{(\ell)}}=\la'_{\ell}$ for $\ell=1,\dots,m$, and
$r^{(m+1)}:=0$.
By a simple calculation it follows that
\[
T_{q,x_i} \big(f_{r,s}^{(\tau)}\big)=
x_i^{-\tau} f_{r+\epsilon_i,s+\epsilon_i}^{(\tau)},
\]
where $\epsilon_i$ is the $i$th standard unit vector in $\Z^n$.
Hence
\[
T_{q,x_i} \big(P'_{\la}\big)= x_i^{-m} \sum \prod_{\ell=1}^m 
f_{r^{(\ell)}+\epsilon_i,r^{(\ell+1)}+\epsilon_i}^{(1)}.
\]
Making the variable change $r^{(\ell)}\mapsto r^{(\ell)}-\epsilon_i$ 
for $\ell=1,\dots,m$ while recalling that $r^{(m+1)}:=0$, this yields
\begin{subequations}\label{Eq_TPp}
\begin{align}
T_{q,x_i} \big(P'_{\la}\big)&=x_i^{-m} 
\sum 
\bigg(\prod_{\ell=1}^{m-1} f_{r^{(\ell)},r^{(\ell+1)}}^{(1)} \bigg)
f_{r^{(m)},\epsilon_i}^{(1)} \\
&=x_i^{-m} \sum \prod_{j=1}^n \big(1-q^{r^{(m)}_j}x_j/x_i\big)
\prod_{\ell=1}^m f_{r^{(\ell)},r^{(\ell+1)}}^{(1)},
\end{align}
\end{subequations}
where the second equality follows from
\[
f_{r,\epsilon_i}^{(\tau)}(x;q)=f_{r,0}^{(\tau)}(x;q)
\prod_{j=1}^n \big(1-q^{r_j}x_j/x_i\big).
\]
Both sums in \eqref{Eq_TPp} are over 
$r^{(1)}\supseteq \cdots\supseteq r^{(m)}\in\NN^n$ 
such that $\abs{r^{(\ell)}}=\la'_{\ell}+1$ for $\ell=1,\dots,m$, and
$r^{(m+1)}:=0$. (The variable change actually leads to 
$r^{(m)}\supseteq \epsilon_i$ but this may be relaxed to
$r^{(m)}\supseteq 0$ since the summands vanish when $r_i^{(m)}=0$.)
Therefore, by \eqref{Eq_BGarsia},
\[
B_m\big( P'_{\la}\big)
=\sum \bigg(\prod_{\ell=1}^m f_{r^{(\ell)},r^{(\ell+1)}}^{(1)}\bigg)
\sum_{i=1}^n \big(1-q^{r^{(m)}_i}\big)
\prod_{\substack{j=1 \\ j\neq i}}^n \frac{x_i-q^{r^{(m)}_j}x_j}{x_i-x_j}.
\]
Recalling the summation \cite[Lemma 1.33]{Milne85b}
\[
\sum_{i=1}^n (1-y_i)
\prod_{\substack{j=1 \\ j\neq i}}^n 
\frac{x_i-y_jx_j}{x_i-x_j}=1-y_1\cdots y_n
\]
and using that $q^{\abs{r^{(m)}}}=q^{\la'_m+1}$, we finally arrive at
\begin{equation}\label{Eq_BmQp}
B_m\big( P'_{\la}\big)=(1-q^{\la'_m+1})
\sum \prod_{\ell=1}^m f_{r^{(\ell)},r^{(\ell+1)}}^{(1)},
\end{equation}
summed over $r^{(1)}\supseteq \cdots\supseteq r^{(m)}\in\NN^n$ 
such that $\abs{r^{(\ell)}}=\la'_{\ell}+1$ for $\ell=1,\dots,m$.

To complete the proof we note that if we introduce the
new partition $\nu=(m,\la_1,\la_2,\dots)$ then
$\nu'_{\ell}=\la'_{\ell}+1$ for $\ell=1,\dots,m$ 
(and $\nu'_{\ell}=\la'_{\ell}=0$ for
$\ell>m$). Hence the sum on the right of \eqref{Eq_BmQp} yields exactly 
$P'_{\nu}$, resulting in \eqref{Eq_Prec}.
\end{proof}

The hypergeometric formula \eqref{Eq_Qphyper} may be restated by 
eliminating redundant summation indices; since
$r^{(1)}\supseteq r^{(2)}\supseteq \cdots\in\NN^n$ 
such that $\abs{r^{(l)}}=\la'_l$, it follows that
$r^{(l)}=r^{(l+1)}$ if $\la'_l=\la'_{l+1}$.
But $f_{r,r}^{(1)}f_{r,s}^{(\tau)}=f_{r,s}^{(\tau+1)}$ so that
we obtain the following equivalent formulation.

\begin{corollary}\label{Cor_Qphyper2}
Let $\la'=(M_1^{\tau_1} M_2^{\tau_2}\dots M_m^{\tau_m})$
for $M_1\geq M_2\geq\cdots\geq M_m\geq 0$ and $\tau_1,\dots,\tau_m>0$. 
Then
\begin{equation}\label{Eq_qhyper_b}
P'_{\la}(x;q)=
\sum \prod_{\ell=1}^m f_{r^{(\ell)},r^{(\ell+1)}}^{(\tau_{\ell})}(x;q),
\end{equation}
where the sum is over $r^{(1)}\supseteq\cdots\supseteq r^{(m)}\in\NN^n$ 
such that $\abs{r^{(\ell)}}=M_{\ell}$, and $r^{(m+1)}:=0$.
\end{corollary}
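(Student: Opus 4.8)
The plan is to derive the corollary directly from Theorem~\ref{Thm_Qphyper} by collapsing the redundant summation indices, exactly as foreshadowed in the paragraph preceding the statement. Starting from \eqref{Eq_Qphyper}, the sum runs over chains $r^{(1)}\supseteq r^{(2)}\supseteq\cdots\in\NN^n$ with $\abs{r^{(\ell)}}=\la'_{\ell}$, and I want to argue that whenever two consecutive column lengths coincide, $\la'_{\ell}=\la'_{\ell+1}$, the corresponding composition vectors must also coincide, $r^{(\ell)}=r^{(\ell+1)}$. This is immediate: the chain condition forces $r^{(\ell)}\supseteq r^{(\ell+1)}$ componentwise, meaning $r^{(\ell)}_i\geq r^{(\ell+1)}_i$ for every $i$, while $\abs{r^{(\ell)}}=\abs{r^{(\ell+1)}}$ forces the sum of these nonnegative differences to vanish, so each difference is zero.

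Once the equalities between consecutive indices are established, I would group the column multiplicities of $\la'$ into blocks. Writing $\la'=(M_1^{\tau_1}\cdots M_m^{\tau_m})$ with strictly decreasing distinct values $M_1>\cdots>M_m$ (or merely weakly decreasing, as the statement permits $M_k=M_{k+1}=0$ only at the tail), each block of $\tau_k$ equal column lengths contributes $\tau_k$ identical consecutive indices in any chain. Thus a chain in \eqref{Eq_Qphyper} is determined by the $m$ distinct vectors $r^{(1)}\supseteq\cdots\supseteq r^{(m)}$ with $\abs{r^{(\ell)}}=M_\ell$, and the full product $\prod_{\ell\geq 1}f^{(1)}_{r^{(\ell)},r^{(\ell+1)}}$ factorises into blocks.

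The remaining algebraic point is the identity $f_{r,r}^{(1)}f_{r,s}^{(\tau)}=f_{r,s}^{(\tau+1)}$ quoted in the excerpt, which lets me absorb each repeated factor into the exponent $\tau$. Within the $k$th block the consecutive indices equal $r^{(\text{block } k)}$, so the internal factors are all of the form $f^{(1)}_{r,r}$ with matching upper and lower index $r=r^{(\text{block }k)}$; multiplying these together with the single cross-block factor $f^{(1)}_{r,s}$ (where $s$ is the index of block $k{+}1$) and iterating the quoted identity $\tau_k-1$ times collapses the block to $f^{(\tau_k)}_{r^{(k)},r^{(k+1)}}$. To use the identity I should verify it once from the definition \eqref{Eq_frs}: the factor $f^{(\tau)}_{r,s}$ carries the superscript only in the prefactor $\prod_i(x_i^{r_i}q^{\binom{r_i}{2}})^\tau$, so $f^{(1)}_{r,r}$ supplies precisely one extra copy of this prefactor (its ratio of $q$-shifted factorials is $\prod_{i,j}(qx_i/x_j)_{r_i-r_j}/(qx_i/x_j)_{r_i-r_j}=1$), which is exactly what raises $\tau$ to $\tau+1$. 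Relabelling the surviving block indices as $r^{(1)}\supseteq\cdots\supseteq r^{(m)}$ with $r^{(m+1)}:=0$ then yields \eqref{Eq_qhyper_b}.

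I do not anticipate a genuine obstacle here; the content is entirely bookkeeping. The one place demanding a little care is making sure the block structure is applied consistently at the boundary $\ell=m$, where the chain terminates at $r^{(m+1)}=0$ and $M_m$ may equal $0$ (corresponding to $\la$ having no part of the largest index, i.e.\ trailing empty columns); in that degenerate case $r^{(m)}=0$ and the final factor is trivial, so \eqref{Eq_qhyper_b} remains valid, consistent with the remark after Theorem~\ref{Thm_Qphyper} that the product may be truncated at any $m\geq\la_1$.
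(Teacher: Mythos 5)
Your argument is correct and is essentially identical to the paper's own (very brief) justification in the paragraph preceding the corollary: equal consecutive column lengths force $r^{(\ell)}=r^{(\ell+1)}$ via containment plus equal weight, and the factor identity $f_{r,r}^{(1)}f_{r,s}^{(\tau)}=f_{r,s}^{(\tau+1)}$, which you correctly verify from \eqref{Eq_frs}, collapses each block. Your handling of the boundary and degenerate cases matches the paper's remark following the corollary.
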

For $m=1$ this simplifies to Milne's expression for $P'_{\la}$
indexed by a rectangular partition $\la$, as implied by equating 
(2.7) and (2.17) of \cite{Milne94}. 
To compute $P'_{\la}$ as efficiently as possible we should take
$M_1>M_2>\cdots>M_m>0$. The result, however, is true if some of the 
$M_i$ are equal and/or zero (in which case further summation indices 
may be eliminated). The advantage of the stated form is that for 
$\tau_1=\tau_2=\cdots=\tau_m=1$ we recover Theorem~\ref{Thm_Qphyper} 
provided we rename $M_i$ as $\la'_i$. 

\subsection{A Littlewood identity for modified Hall--Littlewood polynomials}

In this section we give an important application of 
Theorem~\ref{Thm_Qphyper}, key in proving our combinatorial 
character formulas.

To state our main result we first need the definition of the 
Rogers--Szeg\H{o} polynomials. For $m$ a nonnegative integer,
the $m$th Rogers--Szeg\H{o} polynomial $H_m$ is given by
\cite{Andrews76}
\begin{equation}\label{Eq_RS}
H_m(z;q)=\sum_{i=0}^m z^i\qbin{m}{i},
\end{equation}
where 
$\qbin{m}{i}$
is a $q$-binomial coefficient. 
Following \cite{Warnaar06} we extend the above to partitions by
\[
h_{\la}(z;q)=\prod_{i\geq 1} H_{m_i(\la)}(z;q).
\]

Let $\qbin{\infty}{k}:=1/(q)_k$
and let $\la_{\odd}$ denote the partition containing the
odd-sized parts of $\la$.
For example, if $\la=(6,4,3,3,2,1,1,1)$ then $\la_{\odd}=(3,3,1,1,1)$.

\begin{theorem}\label{Thm_wzfinite}
For $M=(M_1,\dots,M_m)\in\NN^m$ and $m_0(\la):=\infty$
\begin{multline}\label{Eq_Case2}
\sum_{\substack{\la \\[1.5pt] \la_1\leq 2m}}
z^{\ell(\la_{\odd})}
P'_{\la}(x;q) h_{\la_{\odd}}(w/z;q) 
\prod_{\ell=1}^m (wz)^{M_{\ell}-\la'_{2\ell-1}}
\qbin{m_{2\ell-2}(\la)}{M_{\ell}-\la'_{2\ell-1}} \\
=\sum \prod_{i=1}^n 
\big( {-}q^{1-r^{(1)}_i}w/x_i,-q^{1-r^{(1)}_i}z/x_i\big)_{r^{(1)}_i}
\prod_{\ell=1}^m  
f_{r^{(\ell)},r^{(\ell+1)}}^{(2)}(x;q),
\end{multline}
where the sum on the right is over $r^{(1)},\dots,r^{(m)}\in\NN^n$
such that $\abs{r^{(\ell)}}=M_{\ell}$, and $r^{(m+1)}:=0$.
\end{theorem}

For actual applications as well as aesthetic reasons we should sum this 
over the sequence $M$. 
To this end we introduce the generalised Rogers--Szeg\H{o} polynomial
\[
h_{\la}^{(m)}(w,z;q)=
\prod_{\substack{i=1 \\[1pt] i \text{ odd}}}^{2m-1}
z^{m_i(\la)} H_{m_i(\la)}(w/z;q)
\prod_{\substack{i=1 \\[1pt] i \text{ even}}}^{2m-1} H_{m_i(\la)}(wz;q).
\]
For example, if $\la=(6,4,3,3,2,1,1,1)$ and $m=3$
then 
\[
h_{\la}^{(2)}(w,z;q)=z^5 H_1^2(wz;q)H_2(w/z;q)H_3(w/z;q).
\]
From $H_m(z;q)=z^m H_m(z^{-1};q)$ it is easily seen that
$h_{\la}^{(m)}(w,z;q)=h_{\la}^{(m)}(z,w;q)$.
Now taking the $M$-sum in \eqref{Eq_Case2}, interchanging the 
sums over $\la$ and $M$,  
shifting $M_{\ell}\to M_{\ell}+\la'_{2\ell-1}$ and finally
performing the $M_1$-sum using the $q$-binomial theorem 
\cite[Eq.~(2.2.5)]{Andrews76}, \eqref{Eq_Case2} simplifies to 
the following identity.

\begin{corollary}[Littlewood-type identity]\label{Cor_conjectural}
Let $\abs{wz}<1$. Then
\begin{multline}\label{Eq_sumM}
\sum_{\substack{\la \\[1.5pt] \la_1\leq 2m}} 
h_{\la}^{(m)}(w,z;q) P'_{\la}(x;q) \\
=(wz)_{\infty}\sum \prod_{i=1}^n 
\big({-}q^{1-r^{(1)}_i}w/x_i,-q^{1-r^{(1)}_i}z/x_i\big)_{r^{(1)}_i}
\prod_{\ell=1}^m f_{r^{(\ell)},r^{(\ell+1)}}^{(2)}(x;q),
\end{multline}
where the sum on the right is over $r^{(1)},\dots,r^{(m)}\in\NN^n$,
and $r^{(m+1)}:=0$.
\end{corollary}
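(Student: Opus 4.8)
The plan is to obtain \eqref{Eq_sumM} from the conjectured identity \eqref{Eq_Case2} by summing the latter over all $M=(M_1,\dots,M_m)\in\NN^m$ and simplifying each side separately. Since \eqref{Eq_Case2} holds for every fixed $M$, summing each side over $M$ preserves the equality; the hypothesis $\abs{wz}<1$ is precisely what guarantees that the ensuing series converge absolutely and that the interchanges of summation used below are legitimate.

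I would first dispatch the right-hand side. There the dependence on $M$ enters only through the constraints $\abs{r^{(\ell)}}=M_\ell$ on the summation over $r^{(1)},\dots,r^{(m)}$. Summing over all $M\in\NN^m$ removes these constraints, producing the unrestricted sum over $r^{(1)},\dots,r^{(m)}\in\NN^n$ (the nesting $r^{(1)}\supseteq\cdots\supseteq r^{(m)}$ is enforced automatically, since $f_{r^{(\ell)},r^{(\ell+1)}}^{(2)}$ vanishes otherwise). This is exactly the sum appearing on the right of \eqref{Eq_sumM}, apart from the overall factor $(wz)_\infty$ still to be produced.

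I would then turn to the left-hand side. Interchanging the $\la$-sum with the $M$-sum, the inner sum over $M$ factorises as $\prod_{\ell=1}^m\sum_{M_\ell\geq 0}(wz)^{M_\ell-\la'_{2\ell-1}}\qbin{m_{2\ell-2}(\la)}{M_\ell-\la'_{2\ell-1}}$. Shifting $M_\ell\mapsto M_\ell+\la'_{2\ell-1}$ in each factor---harmless because $\qbin{a}{b}=0$ for $b<0$---recasts the $\ell$th factor as $\sum_{k\geq 0}(wz)^k\qbin{m_{2\ell-2}(\la)}{k}$. For $\ell\geq 2$ the multiplicity $m_{2\ell-2}(\la)$ is finite and, by \eqref{Eq_RS}, this factor is the Rogers--Szeg\H{o} polynomial $H_{m_{2\ell-2}(\la)}(wz;q)$. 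For $\ell=1$ the convention $m_0(\la):=\infty$ together with $\qbin{\infty}{k}=1/(q)_k$ gives $\sum_{k\geq 0}(wz)^k/(q)_k=1/(wz)_\infty$ by the $q$-binomial theorem \cite{Andrews76}; this is the one place the constraint $\abs{wz}<1$ is indispensable.

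Finally I would reassemble the pieces. The factor $z^{\ell(\la_{\odd})}h_{\la_{\odd}}(w/z;q)$ already carried by \eqref{Eq_Case2} equals $\prod_{i\textup{ odd}}z^{m_i(\la)}H_{m_i(\la)}(w/z;q)$, and multiplying it by the even-index Rogers--Szeg\H{o} factors $\prod_{\ell=2}^m H_{m_{2\ell-2}(\la)}(wz;q)$ produced above reconstitutes precisely $h_{\la}^{(m)}(w,z;q)$ (the ranges of $i$ being compatible with $\la_1\leq 2m$). The remaining scalar $1/(wz)_\infty$ coming from the $\ell=1$ factor is independent of $\la$; clearing it to the other side as $(wz)_\infty$ yields \eqref{Eq_sumM}. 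The genuinely delicate point in this scheme is not the algebra but the analytic bookkeeping---absolute convergence of the double series and the justified interchange of the $\la$- and $M$-summations (the $\la$-sum being infinite once $M_1$ ranges freely)---for which $\abs{wz}<1$ is exactly the needed hypothesis; everything else reduces to the two evaluations above.
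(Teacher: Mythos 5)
Your proposal is correct and follows essentially the same route as the paper: sum \eqref{Eq_Case2} over $M\in\NN^m$, interchange the $\la$- and $M$-sums, shift $M_{\ell}\mapsto M_{\ell}+\la'_{2\ell-1}$, recognise the $\ell\geq 2$ factors as the Rogers--Szeg\H{o} polynomials $H_{m_{2\ell-2}(\la)}(wz;q)$, and evaluate the $M_1$-sum by the $q$-binomial theorem to produce $1/(wz)_{\infty}$. Your explicit reassembly of $h_{\la}^{(m)}(w,z;q)$ and your remark on where $\abs{wz}<1$ is actually needed match the paper's (more tersely stated) argument.
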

It will be convenient later to also consider \eqref{Eq_sumM} for
$m=0$. For this purpose we define $h_0^{(0)}(w,z;q)=(wz)_{\infty}$,
so that for $m=0$ both sides trivialise to $(wz)_{\infty}$.

For $z=0,-1,-q^{\pm 1},q^{\pm 1/2}$ the Rogers--Szeg\H{o} polynomial 
\eqref{Eq_RS} completely factorises. 
In terms of $h_{\la}^{(m)}(w,z;q)$ (and up to symmetry) this 
corresponds to $(w,z)=(0,z),(1,q^{1/2}),(-1,-q^{1/2}),(q^{1/2},-q^{1/2})$, 
the case $(w,z)=(1,-1)$ being ruled out for convergence reasons.
Surprisingly, it is precisely these special cases that correspond to 
characters of affine Lie algebras.

\medskip

Before proving Theorem~\ref{Thm_wzfinite} we prepare three simple
identities satisfied by the $q$-hypergeometric term $f^{(\tau)}_{r,s}(x;q)$.

\begin{proposition}
Let $N=(N_1,\dots,N_n)\in\Z^n$, $s=(s_1,\dots,s_n)\in\Z^n$ such that 
$s\subseteq N$, and let $M\geq \abs{s}$ be an integer. Then
\begin{subequations}
\begin{align}\label{Eq_Key1}
\sum_{r\in\Z^n} 
f_{N,r}^{(\tau)}(x;q)f_{r,s}^{(1)}(x;q)
&=f_{N,s}^{(\tau)}(x;q)
\prod_{i=1}^n x_i^{s_i} q^{\binom{s_i}{2}}\frac{(-x_i)_{N_i}}{(-x_i)_{s_i}}, 
\\[1mm]
\label{Eq_Key2}
\sum_{\substack{r\in\Z^n \\[1pt] \abs{r}=M}} 
f_{N,r}^{(\tau)}(x;q)f_{r,s}^{(0)}(x;q)&=f_{N,s}^{(\tau)}(x;q) 
\qbin{\abs{N}-\abs{s}}{\hspace{2pt}M\,-\abs{s}},  \\[1mm]
\label{Eq_Key3}
\sum_{r\in\Z^n} z^{\abs{r}} f_{N,r}^{(\tau)}(x;q)f_{r,s}^{(0)}(x;q)
&=z^{\abs{s}} f_{N,s}^{(\tau)}(x;q)\, H_{\abs{N}-\abs{s}}(z;q).
\end{align}
\end{subequations}
\end{proposition}
Note that in all three cases we may restrict the sum over $r$ to
$s\subseteq r\subseteq N$.

\begin{proof}
We first prove \eqref{Eq_Key1}.
Shifting the summation index $r\mapsto r+s$ and using that 
$f_{s,s}^{(1)}(x;q)= \prod_i x_i^{s_i} q^{\binom{s_i}{2}}$, we get
\[
\sum_{r\in\Z^n} 
\frac{f_{N,r+s}^{(\tau)}(x;q)f_{r+s,s}^{(1)}(x;q)}
{f_{N,s}^{(\tau)}(x;q)f_{s,s}^{(1)}(x;q)}
=\frac{(-x_i)_{N_i}}{(-x_i)_{s_i}}.
\]
Replacing $N\mapsto N+s$ followed by $x\mapsto -xq^{-\abs{N}-s}$,
and then using
\begin{equation}\label{Eq_fx}
f_{N,r+s}^{(\tau)}(x;q)=f_{s,s}^{(\tau)}(x;q) f_{N,r}^{(\tau)}(xq^s;q)
\end{equation}
and $(aq)_{n+k}=(aq)_k(aq^k)_n$,
the $s$ dependence drops out and the resulting identity 
can be recognised as Milne's terminating $q$-binomial 
theorem \cite[Theorem 5.46]{Milne97}
\[
\Phien\big(q^{-N};\text{--}\,;q,x\big)=
\prod_{i=1}^n (q^{-\abs{N}}x_i)_{N_i},
\]
where $N_1,\dots,N_n\geq 0$ and
\begin{equation}\label{Eq_Phidef}
\Phien\big(q^{-N};\text{--}\,;q,x\big):=\sum_{r\in\NN^n} 
\frac{f_{N,r}^{(0)}\big(xq^{-\abs{N}};q\big)
f_{r,0}^{(1)}\big(xq^{-\abs{N}};q\big)}
{f_{N,0}^{(0)}\big(xq^{-\abs{N}};q\big)}.
\end{equation}

To prove the second claim we proceed in almost identical fashion.
We first write \eqref{Eq_Key2} as
\[
\sum_{\substack{r\in\Z^n \\[1pt] \abs{r}=M-\abs{s}}} 
\frac{f_{N,r+s}^{(\tau)}(x;q)f_{r+s,s}^{(0)}(x;q)}
{f_{N,s}^{(\tau)}(x;q)}
=\qbin{\abs{N}-\abs{s}}{\hspace{2pt}M\,-\abs{s}}
\]
and then make substitutions
$M\mapsto M+\abs{s}$, $N\mapsto N+s$ and $x\mapsto xq^{-s}$.
By \eqref{Eq_fx} this yields
\[
\sum_{\substack{r\in\Z^n \\[1pt] \abs{r}=M}} 
\frac{f_{N,r}^{(0)}(x;q)f_{r,0}^{(0)}(x;q)}
{f_{N,0}^{(0)}(x;q)}
=\qbin{\abs{N}}{M}
\]
which again is independent of $s$.
By the easy to verify
\[
f_{r,0}^{(0)}(x;q)=(-1)^{\abs{r}}
q^{-\binom{\abs{r}}{2}}
\prod_{1\leq i<j\leq n} \frac{x_i q^{r_i}-x_j q^{r_j}}{x_i-x_j}
\prod_{i,j=1}^n q^{\binom{r_i}{2}}
\Big({-}\frac{x_i}{x_j}\Big)^{r_i}
\frac{1}{(qx_i/x_j)_{r_i}}
\]
and
\[
\frac{f_{N,r}^{(0)}(x;q)}{f_{N,0}^{(0)}(x;q)}=q^{\abs{N}\abs{r}}
\prod_{i,j=1}^n q^{-\binom{r_i}{2}}
\Big({-}\frac{x_j}{x_i}\Big)^{r_i}
(q^{-N_j}x_i/x_j)_{r_i},
\]
this is Milne's \cite[Theorem 1.49]{Milne85} 
\[
\sum_{\substack{r\in\Z^n \\[1pt] \abs{r}=M}} 
\prod_{1\leq i<j\leq n} \frac{x_i q^{r_i}-x_j q^{r_j}}{x_i-x_j}
\prod_{i,j=1}^n \frac{(a_j x_i/x_j)_{r_i}}{(qx_i/x_j)_{r_i}}
=\frac{(a_1\cdots a_n)_M}{(q)_M}
\]
for $a_{ii}\mapsto q^{-N_i}$.

Finally, \eqref{Eq_Key3} follows after multiplying
both sides of \eqref{Eq_Key2} by $z^M$ and then summing over $M$
using
\[
\sum_{M=\abs{s}}^{\infty} z^M \qbin{\abs{N}-\abs{s}}{\hspace{2pt}M\,-\abs{s}}
=z^{\abs{s}} \sum_{M=0}^{\infty} z^M \qbin{\abs{N}-\abs{s}}{M}
=z^{\abs{s}} H_{\abs{N}-\abs{s}}(z;q). \qedhere
\] 
\end{proof}

We are now prepared to prove Theorem~\ref{Thm_wzfinite}.

\begin{proof}
We will show how to transform the left-hand side of 
\eqref{Eq_Case2}---denoted below by $\LHS$---into the right-hand side.

As a first step we apply Theorem~\ref{Thm_Qphyper} 
with $\la$ a partition such that $\la_1\leq 2m$, and replace 
$(r_{2\ell-1},r_{2\ell})\mapsto (u_{\ell},v_{\ell})$
for all $\ell=1,\dots,m$. This yields
\begin{equation}\label{Ppff}
P'_{\la}(x;q)=\sum \prod_{\ell=1}^m f_{u^{(\ell)},v^{(\ell)}}^{(1)}(x;q)
f_{v^{(\ell)},u^{(\ell+1)}}^{(1)}(x;q),
\end{equation}
summed over $u^{(1)}\supseteq v^{(1)}\supseteq \cdots\supseteq 
u^{(m)}\supseteq v^{(m)}\in\NN^n$ 
such that $\abs{u^{(\ell)}}=\la'_{2\ell-1}$
and $\abs{v^{(\ell)}}=\la'_{2\ell}$ (and as usual, $u^{(m+1)}:=0$).
Also using
\[
h_{\la_{\odd}}(w/z;q)=
\prod_{\ell=1}^m H_{m_{2\ell-1}(\la)}(w/z;q),
\]
as well as $l(\la_{\odd})=\sum_{i=1}^m m_{2\ell-1}(\la)$ and
$m_i(\la)=\la'_i-\la'_{i+1}$, we obtain
\begin{multline}\label{Eq_abo}
\LHS=
\sum_{\{u^{(\ell)},v^{(\ell)}\}}
\prod_{\ell=1}^m  \bigg\{
w^{M_{\ell}-\abs{u^{(\ell)}}}
z^{M_{\ell}-\abs{v^{(\ell)}}}
\qbin{\abs{v^{(\ell-1)}}-\abs{u^{(\ell)}}}{M_{\ell}-\abs{u^{(\ell)}}} \\
\times
H_{\abs{u^{(\ell)}}-\abs{v^{(\ell)}}}(w/z;q) 
f^{(1)}_{u^{(\ell)},v^{(\ell)}}(x;q)
f^{(1)}_{v^{(\ell)},u^{(\ell+1)}}(x;q) \bigg\},
\end{multline}
where $\sum_{\{u^{(\ell)},v^{(\ell)}\}}$ is shorthand for a sum over
$u^{(1)}\supseteq v^{(1)}\supseteq \cdots\supseteq 
u^{(m)}\supseteq v^{(m)}\in\NN^n$.
In the above $\abs{v^{(0)}}$ should be interpreted as $\infty$. 
Concerning this occurrence of $\infty$ in one of the $q$-binomial coefficients, 
we remark that although $\lim_{N\to(\infty^n)}f_{N,r}(x;q)$ does not
exist, $\lim_{N\to(\infty^n)} f_{N,r}(x;q)/f_{N,s}(x;q)$ does and is given by $1$. 
In the next step of our proof we write, by abuse of notation,
\[
\sum_{\substack{r\in\Z^n \\[1pt] \abs{r}=M}} f^{(0)}_{r,s}(x;q)=\frac{1}{(q)_{M-\abs{s}}}
\]
as
\[
\sum_{\substack{r\in\Z^n \\[1pt] \abs{r}=M}} f^{(\tau)}_{(\infty^n),r}(x;q)f^{(0)}_{r,s}(x;q)=
f^{(\tau)}_{(\infty^n),s}(x;q) \qbin{\infty}{M-\abs{s}}.
\]
With this in mind we we apply \eqref{Eq_Key2} and \eqref{Eq_Key3} with $\tau=1$
to expand \eqref{Eq_abo} as
\begin{multline*}
\LHS=
\sum_{\substack{\{r^{(\ell)},s^{(\ell)},u^{(\ell)},v^{(\ell)}\} \\[1pt]
\abs{r^{(\ell)}}=M_{\ell}}}
\prod_{\ell=1}^m  \bigg\{
w^{M_{\ell}+\abs{s^{(\ell)}}-\abs{u^{(\ell)}}-\abs{v^{(\ell)}}}
z^{M_{\ell}-\abs{s^{(\ell)}}} \\ \times
f^{(0)}_{r^{(\ell)},u^{(\ell)}}(x;q)
f^{(1)}_{u^{(\ell)},s^{(\ell)}}(x;q)
f^{(0)}_{s^{(\ell)},v^{(\ell)}}(x;q)
f^{(1)}_{v^{(\ell)},r^{(\ell+1)}}(x;q) 
\bigg\},
\end{multline*}
where $
\sum_{\{r^{(\ell)},s^{(\ell)},u^{(\ell)},v^{(\ell)}\}}$ stands for
a sum over
\[
r^{(1)}\supseteq u^{(1)}\supseteq s^{(1)}\supseteq v^{(1)}\supseteq \cdots
\supseteq 
r^{(m)}\supseteq u^{(m)}\supseteq s^{(m)}\supseteq v^{(m)}\in\NN^n.
\]
By 
\begin{equation}\label{Eq_f-scalar}
f^{(\tau)}_{N,r}(ax;q)=a^{\abs{N}\tau}f^{(\tau)}_{N,r}(x;q)
\end{equation}
for $a$ a scalar, this is also
\begin{multline*}
\LHS=
\sum_{\substack{\{r^{(\ell)},s^{(\ell)},u^{(\ell)},v^{(\ell)}\} \\[1pt]
\abs{r^{(\ell)}}=M_{\ell}}}
\prod_{\ell=1}^m  \bigg\{
w^{M_{\ell}+\abs{s^{(\ell)}}}
z^{M_{\ell}-\abs{s^{(\ell)}}} \\ \times
f^{(0)}_{r^{(\ell)},u^{(\ell)}}\Big(\frac{x}{w};q\Big)
f^{(1)}_{u^{(\ell)},s^{(\ell)}}\Big(\frac{x}{w};q\Big)
f^{(0)}_{s^{(\ell)},v^{(\ell)}}\Big(\frac{x}{w};q\Big)
f^{(1)}_{v^{(\ell)},r^{(\ell+1)}}\Big(\frac{x}{w};q\Big)
\bigg\}.
\end{multline*}
By \eqref{Eq_Key1} we can now perform the sums over 
$\{u^{(\ell)}\}$ and $\{v^{(\ell)}\}$, so that 
\begin{multline*}
\LHS=
\sum_{\substack{\{r^{(\ell)},s^{(\ell)}\} \\[1pt]
\abs{r^{(\ell)}}=M_{\ell}}}
\prod_{\ell=1}^m  \bigg\{
w^{M_{\ell}+\abs{s^{(\ell)}}}
z^{M_{\ell}-\abs{s^{(\ell)}}}
f^{(0)}_{r^{(\ell)},s^{(\ell)}}\Big(\frac{x}{w};q\Big)
f^{(0)}_{s^{(\ell)},r^{(\ell+1)}}\Big(\frac{x}{w};q\Big) \\ \times
\prod_{i=1}^n \Big(\frac{x_i}{w}\Big)^{s_i^{(\ell)}} q^{\binom{s_i^{(\ell)}}{2}}
\frac{(-x_i/w)_{r^{(\ell)}_i}}{(-x_i/w)_{s^{(\ell)}_i}} \cdot
\Big(\frac{x_i}{w}\Big)^{r_i^{(\ell+1)}} q^{\binom{r_i^{(\ell+1)}}{2}}
\frac{(-x_i/w)_{s^{(\ell)}_i}}{(-x_i/w)_{r^{(\ell+1)}_i}}
\bigg\}.
\end{multline*}
By some telescoping, and the use of
\begin{equation}\label{Eq_akinv}
(a;q)_k=(-a)^k q^{\binom{k}{2}} (q^{1-k}/a)_k,
\end{equation}
\eqref{Eq_f-scalar} and $M_l=\abs{r^{(\ell)}}$,
this may be simplified to
\begin{multline*}
\LHS=
\sum_{\substack{\{r^{(\ell)},s^{(\ell)}\} \\[1pt]
\abs{r^{(\ell)}}=M_{\ell}}}
\prod_{i=1}^n (-q^{1-r^{(1)}_i}w/x_i)_{r^{(1)}_i} \\ \times
\prod_{\ell=1}^m  \bigg\{
z^{2\abs{r^{(\ell)}}}
f^{(1)}_{r^{(\ell)},s^{(\ell)}}\Big(\frac{x}{z};q\Big)
f^{(1)}_{s^{(\ell)},r^{(\ell+1)}}\Big(\frac{x}{z};q\Big)
\bigg\}.
\end{multline*}
Now using \eqref{Eq_Key1} to sum over $\{s^{(l)}\}$ results in
\begin{multline*}
\LHS=
\sum_{\substack{\{r^{(\ell)}\} \\[1pt] \abs{r^{(\ell)}}=M_{\ell}}}
\prod_{i=1}^n (-q^{1-r^{(1)}_i}w/x_i)_{r^{(1)}_i} 
\prod_{\ell=1}^m  \bigg\{
z^{2\abs{r^{(\ell)}}}
f^{(1)}_{r^{(\ell)},r^{(\ell+1)}}\Big(\frac{x}{z};q\Big) \\ \times
\prod_{i=1}^n \Big(\frac{x_i}{z}\Big)^{r^{(\ell+1)}_i} 
q^{\binom{r^{(\ell+1)}_i}{2}}
\frac{(-x_i/z)_{r^{(\ell)}_i}}{(-x_i/z)_{r^{(\ell+1)}_i}} \bigg\}.
\end{multline*}
Again using telescoping plus \eqref{Eq_f-scalar} and \eqref{Eq_akinv}
this simplifies to
\[
\LHS=
\sum_{\substack{\{r^{(\ell)}\} \\[1pt] \abs{r^{(\ell)}}=M_{\ell}}}
\prod_{i=1}^n (-q^{1-r^{(1)}_i}w/x_i,-q^{1-r^{(1)}_i}z/x_i)_{r^{(1)}_i} 
\prod_{\ell=1}^m 
f^{(2)}_{r^{(\ell)},r^{(\ell+1)}}(x;q)
\]
which is the desired right-hand side of \eqref{Eq_Case2}.
\end{proof}

\subsection{Rogers--Ramanujan-type $q$-series}
To conclude the section on modified Hall--Littlewood polynomials,
we present a conjecture which will be important in our 
discussion of Macdonald-type eta-function identities in 
Section~\ref{Sec_dedekind}.

We begin by defining a very general $q$-series of Rogers--Ramanujan
or Nahm--Zagier-type \cite{Andrews74,Nahm07,Zagier07}.
Let $C_n$ be the $n\times n$ Cartan matrix of $\A_n$, i.e.,
$(C_n^{-1})_{ab}=\min\{a,b\}-ab/(n+1)$ and let $T_m$ be the 
$m\times m$ Cartan-type matrix of the tadpole graph of $m$ vertices, 
i.e., $(T_m^{-1})_{ij}=\min\{i,j\}$. 
Then
\begin{multline}\label{Eq_Fdef}
F_{m,n}(u,w,z;q):=\sum
\prod_{a,b=1}^n \prod_{i,j=1}^m 
q^{\frac{1}{2}(C_n)_{ab} (T_m^{-1})_{ij} r_i^{(a)} r_i^{(b)}} \\
\times
\big({-}zq^{1/2-r_1^{(1)}-\cdots-r_m^{(1)}}/u\big)_{r_1^{(1)}+\cdots+r_m^{(1)}} 
\prod_{a=1}^n \big({-}u_awq^{r_m^{(a)}+1/2}\big)_{\infty}
\prod_{a=1}^n \prod_{i=1}^m \frac{u_a^{2i r_i^{(a)}}}{(q)_{r_i^{(a)}}},
\end{multline}
where the sum is over $r_i^{(a)}\in\NN$ for all $1\leq a\leq n$ and 
$1\leq i\leq m$, and $u_a:=u^{(-1)^{a-1}}$.
In particular, if $Q_{+}=\sum_{i=1}^n \NN \alpha_i$ with 
$\alpha_1,\dots,\alpha_n$ the simple roots of $\A_n$, then
\begin{multline}\label{Eq_Fded1}
F_{1,n}(u,w,z;q) \\ =\sum_{\alpha\in Q_{+}}
q^{\frac{1}{2}\ipb{\alpha}{\alpha}}
\big({-}zq^{1/2-\ipb{\alpha}{\Lambda_1}}/u\big)_{\ipb{\alpha}{\Lambda_1}} 
\prod_{a=1}^n \frac{ u_a^{2\ipb{\alpha}{\Lambda_a}}
\big({-}u_awq^{1/2+\ipb{\alpha}{\Lambda_a}}\big)_{\infty}}
{(q)_{\ipb{\alpha}{\Lambda_a}}}.
\end{multline}
Several important $q$-series arise as special cases:
$F_{1,1}(1,0,0;q)$ and $F_{1,1}(q^{1/2},0,0;q)$ are the 
Rogers--Ramanujan $q$-series,
$F_{k-1,1}(1,w,0;q)$ for $w=0$ and $w=q^{1/2}$ are the (first) 
Andrews--Gordon $q$-series \cite{Andrews74}
and its even modulus generalisation due to Bressoud \cite{Bressoud80},
and $F_{k-1,1}(1,w^{1/2},1;q)$ for $w=0$ and $w=q^{1/2}$
are the generalised G\"ollnitz--Gordon $q$-series \cite{Andrews75} and
its even modulus variant \cite{Bressoud80b}.

\begin{conjecture}\label{Con_spec}
Let $m,n\geq 1$ and $F_{m,n}(u,w,z;q)$ as defined in \eqref{Eq_Fdef}
Specialising $x=q^{1/2}(u,u^{-1},u,u^{-1},\dots)$ in the left-hand side
of \eqref{Eq_sumM} yields
\begin{equation}\label{Eq_Fdn}
\sum_{\substack{\la \\[1.5pt] \la_1\leq 2m}} 
q^{\abs{\la}/2}h_{\la}^{(m)}(w,z;q) P'_{\la}
\big(\underbrace{u,u^{-1},u,u^{-1},\dots}_{n \text{ terms}};q\big)
=F_{m,n}(u,w,z;q).
\end{equation}
\end{conjecture}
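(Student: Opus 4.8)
The left-hand side of \eqref{Eq_Fdn} is precisely the left-hand side of \eqref{Eq_sumM} evaluated at $x=q^{1/2}(u,u^{-1},u,\dots)$, since $P'_{\la}(q^{1/2}y;q)=q^{\abs{\la}/2}P'_{\la}(y;q)$ by homogeneity. The plan is therefore to push the same specialisation through the \emph{right}-hand side of \eqref{Eq_sumM} and to show that the resulting multisum collapses to $F_{m,n}(w,z;q)$. This reduces the conjecture to an unconditional $q$-series computation together with the identity \eqref{Eq_sumM} itself; the latter is Corollary~\ref{Cor_conjectural}, hence rests on Conjecture~\ref{Con_wzfinite} in general, but is a theorem when $w=0$ by Proposition~\ref{Prop_Case2}. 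Thus a successful computation yields \eqref{Eq_Fdn} unconditionally for $w=0$ and conditionally (on Conjecture~\ref{Con_wzfinite}) otherwise.

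Writing $x_i=q^{1/2}u_i$ with $u_i:=u^{(-1)^{i-1}}$, the ratios $x_i/x_j$ entering \eqref{Eq_frs} take only the values $1$, for $i\equiv j\pmod 2$, and $u^{\pm2}$ otherwise. The key step is a principal-specialisation lemma for $f^{(2)}_{r,s}$. The diagonal factor $\prod_i(x_i^{r_i}q^{\binom{r_i}{2}})^2$ becomes $\prod_i q^{r_i^2}u_i^{2r_i}$, and after passing to the free difference variables $n_i^{(\ell)}:=r_i^{(\ell)}-r_i^{(\ell+1)}\ge 0$ (with $r^{(m+1)}:=0$) the identity $\sum_{\ell}(r_i^{(\ell)})^2=\sum_{j,k}\min(j,k)\,n_i^{(j)}n_i^{(k)}$ reproduces, position by position, the tadpole matrix $(T_m^{-1})_{jk}=\min(j,k)$; the same substitution turns $\prod_{\ell}u_i^{2r_i^{(\ell)}}$ into the weight $\prod_k u_i^{2k\,n_i^{(k)}}$ of $F_{m,n}$. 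The double product over positions, evaluated through the Milne-type summations already used in the proofs of Theorem~\ref{Thm_Qphyper} and Proposition~\ref{Prop_Case2}, then supplies the fermionic denominators $\prod(q)_{r_i^{(\ell)}}$ and assembles the position index into the $\A_n$ Cartan matrix $C_n$, so that the exponent of $q$ becomes the factorised form $\tfrac12(C_n)_{ab}(T_m^{-1})_{ij}$.

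It then remains to reconcile the prefactors. On the Hall--Littlewood side $w$ and $z$ enter symmetrically, through $(wz)_{\infty}$ and the finite products $\prod_i(-q^{1-r_i^{(1)}}w/x_i,-q^{1-r_i^{(1)}}z/x_i)_{r_i^{(1)}}$, whereas in $F_{m,n}$ they enter asymmetrically: $z$ through the single factor $(-zq^{1/2-r_1^{(1)}-\dots-r_m^{(1)}}/u)_{r_1^{(1)}+\dots+r_m^{(1)}}$ and $w$ through the infinite products $\prod_a(-u_awq^{1/2+r_m^{(a)}})_{\infty}$. I would break this symmetry by the inversion $(-a)_r=(-q^{1-r}/a)_r\,a^r q^{\binom r2}$, exactly as at the close of the proof of Proposition~\ref{Prop_Case2}, absorbing the resulting powers and $q^{\binom r2}$'s into the quadratic form; the alternating specialisation $u_i=u^{(-1)^{i-1}}$ should then let the position-direction products telescope down to their endpoints, collapsing the $z$-product to its $i=1$ boundary and anchoring the $w$-dependence at the terminal chain index, where a final resummation of $(wz)_{\infty}$ against the finite $w$-products (by Euler's identity) produces the level-$m$ infinite products $(-u_awq^{1/2+r_m^{(a)}})_{\infty}$.

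The main obstacle is the prefactor analysis of the previous paragraph: the double telescoping---in the chain direction $\ell$ and, via the alternating specialisation, in the position direction $i$---must be shown to route the opposite-parity factors of $f^{(2)}$ and the $w,z$-products precisely into the endpoint prefactors of $F_{m,n}$, the conversion of finite products together with $(wz)_{\infty}$ into genuine infinite products being the delicate point. Even granting this computation, the argument proves \eqref{Eq_Fdn} only as far as \eqref{Eq_sumM} is available, so that a complete unconditional result is obtained only for $w=0$; the remaining factorisation points $(w,z)=(1,q^{1/2}),(-1,-q^{1/2}),(q^{1/2},-q^{1/2})$---those relevant to the affine characters---would follow from the same specialisation once Conjecture~\ref{Con_wzfinite} is established.
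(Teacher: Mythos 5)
This is a proof plan rather than a proof, and it has two genuine problems. First, a logical one: Conjecture~\ref{Con_spec} is a statement about the \emph{left}-hand side of \eqref{Eq_sumM} alone, and is independent of Conjecture~\ref{Con_wzfinite}. By routing the argument through the right-hand side of \eqref{Eq_sumM} you can at best establish \eqref{Eq_Fdn} conditionally on Conjecture~\ref{Con_wzfinite} whenever $w\neq 0$, which is strictly weaker than what is claimed (and weaker than what is actually known: the $m=1$ case, Theorem~\ref{Thm_diseen}, is proved unconditionally for all $w,z$). The paper's route is different in kind: it never touches the $f^{(2)}$ multisum, but instead substitutes the fermionic (Kirillov) formula for $P'_{\la}$ directly into the left-hand side, evaluates the inner sums by $q$-Chu--Vandermonde and the $q$-binomial theorem, and then generates the $n$ infinite products $\prod_{a}(-u_awq^{1/2+r^{(a)}_m})_{\infty}$ one position at a time by $n-1$ successive applications of the transformation ${_1}\phi_1(a;0;q,z)=(z)_{\infty}\,{_0}\phi_1(\text{--};z;q,az)$.

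Second, the computational core of your plan is missing precisely at the point where the real difficulty lies, and you say so yourself. The quadratic-form bookkeeping is fine: with $n^{(k)}_i=r^{(k)}_i-r^{(k+1)}_i$ one does have $\sum_{\ell}(r^{(\ell)}_i)^2=\sum_{j,k}\min(j,k)\,n^{(j)}_in^{(k)}_i$, which accounts for $T_m^{-1}$. But the emergence of the $\A_n$ Cartan matrix from the specialised off-diagonal factors of $f^{(2)}_{r,s}$ (which, at $x_i/x_j=1$ for $i\equiv j$, must first be rewritten as $1/(q^{1+r_i-r_j})_{r_j-s_j}$ before the specialisation even makes sense) is asserted, not derived, and the claim that the $w,z$-prefactors together with $(wz)_{\infty}$ ``telescope'' and then reassemble ``by Euler's identity'' into the $n$ separate infinite products of $F_{m,n}$ is not a routine step: already for $m=1$ this reassembly is exactly what forces the iterated ${_1}\phi_1$-to-${_0}\phi_1$ argument in the paper, and nothing resembling an Euler-type product manipulation suffices there. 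Until that prefactor analysis is carried out, the proposal establishes nothing beyond what is already in the paper, and even if completed it would prove \eqref{Eq_Fdn} only modulo Conjecture~\ref{Con_wzfinite} for the parameter values $(w,z)$ that actually matter for the character formulas.
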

We note that for $(w,z)\neq (0,0)$ we effectively have two 
conjectures since the symmetry $F_{m,n}(u,w,z;q)=F_{m,n}(u,z,w;q)$
implied by the conjecture is not at all evident.
In the rank-$1$ case the conjecture is easily proved using
standard manipulations for $q$-hypergeometric series.
The conjecture also holds for $u=1$, $w=z=0$ and $n$ even
thanks to \eqref{Eq_FS} below, and for $u=1,q^{1/2}$, $w=z=0$ and $m=1$ 
by \cite[Theorem 4.1]{WZ12}.
The proof of that theorem only requires minor modifications
to settle the conjecture for $m=1$ and arbitrary $u$, $w$ and $z$.
\begin{theorem}\label{Thm_diseen}
Equation \eqref{Eq_Fdn} holds for $m=1$.
\end{theorem}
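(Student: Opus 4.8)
The plan is to prove the $m=1$ case of \eqref{Eq_Fdn} by evaluating the left-hand side directly, following the architecture of the proof of \cite[Theorem~4.1]{WZ12} but carrying the three parameters $u$, $w$, $z$ through the computation. For $m=1$ the sum runs over partitions $\la=(2^{a}1^{b})$ with $\la_1\le 2$, weighted by
\[
h_{\la}^{(1)}(w,z;q)=z^{b}H_{b}(w/z;q),\qquad b=m_1(\la),
\]
and by $P'_{\la}$ evaluated at $x_i=q^{1/2}u^{(-1)^{i-1}}$. Since $\la'=(a+b,a)$, Theorem~\ref{Thm_Qphyper} expands $P'_{(2^{a}1^{b})}$ as a sum over $r^{(1)}\supseteq r^{(2)}\in\NN^{n}$ with $\abs{r^{(1)}}=a+b$ and $\abs{r^{(2)}}=a$. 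Interchanging the sum over $\la$ with this expansion turns the whole left-hand side into a free double sum over $r^{(2)}\subseteq r^{(1)}$ in $\NN^{n}$, with $a=\abs{r^{(2)}}$ and $b=\abs{r^{(1)}}-\abs{r^{(2)}}$ recovered from the indices and the Rogers--Szeg\H{o} weight reattached.

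The first ingredient is the specialisation itself. Setting $x_i=q^{1/2}u^{(-1)^{i-1}}$, the product of $q$-shifted factorials in $f_{r^{(1)},r^{(2)}}^{(1)}f_{r^{(2)},0}^{(1)}$ collapses---just as in the $w=z=0$, $u\in\{1,q^{1/2}\}$ case of \cite{WZ12}---into a power of $q$ governed by the $\A_{n}$ Cartan matrix $C_{n}$, monomials in $u$, and denominators of the form $(q)_{r^{(2)}_i}$. This is the source of the quadratic form and the factors $u_a^{2\ipb{\alpha}{\Lambda_a}}/(q)_{\ipb{\alpha}{\Lambda_a}}$ in \eqref{Eq_Fded1} (recall $T_1=(1)$). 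I expect the passage to generic $u$ to be routine, since the special values $u\in\{1,q^{1/2}\}$ are used in \cite{WZ12} only to tidy the final answer, whereas the intermediate $\A_{n-1}$ manipulations---in particular Milne's terminating $q$-binomial theorem \cite[Theorem~5.46]{Milne97}, already used in the proof of Proposition~\ref{Prop_Case2}---hold for generic $u$.

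The real work is the summation over the difference $s=r^{(1)}-r^{(2)}\in\NN^{n}$, carrying the Rogers--Szeg\H{o} weight $z^{\abs{s}}H_{\abs{s}}(w/z;q)$; this is the inner, \emph{unbounded} sum and it is the one that must reproduce the surviving $w$- and $z$-factors of \eqref{Eq_Fded1}. Expanding $H_{\abs{s}}(w/z;q)=\sum_{i}(w/z)^{i}\qbin{\abs{s}}{i}$ and summing over $s$ by the $q$-binomial theorem \cite[Eq.~(2.2.5)]{Andrews76}, in the same manner as the passage from \eqref{Eq_Case2} to \eqref{Eq_sumM}, the $w$-part should build up the $n$ infinite products $\big({-}u_aw q^{1/2+\ipb{\alpha}{\Lambda_a}}\big)_{\infty}$ while the $z$-part should produce the single terminating factor $\big({-}zq^{1/2-\ipb{\alpha}{\Lambda_1}}/u\big)_{\ipb{\alpha}{\Lambda_1}}$ attached to $\Lambda_1$. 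What remains is the $n$-fold sum over $r^{(2)}$, which is identified with the sum over $\alpha\in Q_{+}$ of $\A_{n}$ via $r^{(2)}_a\leftrightarrow\ipb{\alpha}{\Lambda_a}$, completing the match with $F_{1,n}(u,w,z;q)$.

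The hard part will be precisely this $(w,z)$-summation and the verification that it closes in the markedly \emph{asymmetric} shape of \eqref{Eq_Fded1}: $z$ emerges through one terminating factor tied to $\Lambda_1$, while $w$ emerges through $n$ infinite products, so the $w\leftrightarrow z$ symmetry of $F_{1,n}$ predicted by Conjecture~\ref{Con_spec} is entirely invisible at this stage. The delicate points are to confirm that the $q$-binomial summation reproduces these two different kinds of factor with the correct exponents, that the interplay of $w$ and $z$ inside the single polynomial $H_{\abs{s}}(w/z;q)$ does not generate spurious cross-terms, and that the bookkeeping of the $\A_n$ quadratic form is preserved after the $s$-sum. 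Should a direct resummation prove unwieldy, an alternative is to induct on $n$ from the elementary rank-one case, effecting the $\A_{n-1}\to\A_{n}$ step by a Bailey-type move with $w$ and $z$ inserted into the Bailey pair, mirroring the chain arguments behind \cite{WZ12}.
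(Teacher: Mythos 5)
There are two genuine gaps here. First, your starting point fails: under the specialisation $x_i=q^{1/2}u^{(-1)^{i-1}}$ the summand of Theorem~\ref{Thm_Qphyper} does \emph{not} collapse to ``a power of $q$ governed by $C_n$, monomials in $u$, and denominators of the form $(q)_{r^{(2)}_i}$''. The kernel $f^{(1)}_{r,s}(x;q)$ contains $\prod_{i,j=1}^n(qx_i/x_j)_{r_i-r_j}/(qx_i/x_j)_{r_i-s_j}$, and at this specialisation $x_i/x_j\in\{1,u^{2},u^{-2}\}$, so for generic $u$ you are left with genuine $q$-shifted factorials of the form $(qu^{\pm 2})_{k}$ that neither cancel nor reduce to $(q)_k$'s. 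This is precisely why the paper's proof does not start from Theorem~\ref{Thm_Qphyper} at all, but from the fermionic formula of \cite{Kirillov00,WZ12}, in which the $x_i$ enter only through monomials $x_i^{\mu_j^{(i-1)}-\mu_j^{(i)}}$, so that any specialisation produces only powers of $q$ and $u$ multiplying a fixed, $u$-independent $q$-binomial kernel. Your claim that passing to generic $u$ is ``routine'' misdiagnoses where the special values $u=1,q^{1/2}$ actually matter.

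Second, even granting a usable expansion, the $(w,z)$-summation does not close in the way you predict. Summing the multiplicity $b=m_1(\la)$ against $z^{b}H_b(w/z;q)$ via the $q$-binomial theorem produces, as in the paper, the terminating factor $({-}zq^{1/2-r_1}/u)_{r_1}$ together with a \emph{single} infinite product $({-}uwq^{1/2+r_1-r_2})_{\infty}$; it does not ``build up the $n$ infinite products''. The real content of the proof is the subsequent chain of $n-1$ applications of the transformation ${_1}\phi_1(a;0;q,z)=(z)_{\infty}\,{_0}\phi_1(\text{--}\,;z;q,az)$, applied successively to the sums over $r_2,r_3,\dots$, which peels $({-}uwq^{1/2+r_1-r_2})_{\infty}$ apart into $\prod_{a}({-}u_awq^{1/2+r_a})_{\infty}$ while preserving the quadratic form $\frac{1}{2}rC_nr^t$. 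You correctly flag this step as ``the hard part'', but you supply no mechanism for it, and the $q$-binomial theorem alone will not produce the asymmetric shape of \eqref{Eq_Fded1}.
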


\begin{proof}
One possible approach would be to specialise 
$x=q^{1/2}(u,u^{-1},u,u^{-1},\dots)$ in the
$m=1$ case of Corollary~\ref{Cor_conjectural} and prove that
\begin{multline*}
(wz)_{\infty}\sum \prod_{i=1}^n 
u_i^{2r_i} q^{r_i^2} \big({-}q^{1/2-r_i}w/u_i,-q^{1/2-r_i}z/u_i\big)_{r_i} 
\prod_{i,j=1}^n \frac{(qu_i/u_j)_{r_i-r_j}}{(qu_i/u_j)_{r_i}} \\
=\sum %q^{\frac{1}{2}\sum_{i,j=1}^n (C_n)_{ij} r_i r_j}
\big({-}zq^{1/2-r_1}/u\big)_{r_1} 
\prod_{i=1}^n u_i^{2r_i}q^{r_i^2-r_i r_{i+1}}
\frac{({-}u_iwq^{r_i+1/2})_{\infty}}{(q)_{r_i}},
\end{multline*}
where $u_i=u^{(-1)^{i-1}}$ and $r_{n+1}:=0$.
Using standard basic hypergeometric notation, for $n=1$ this
is equivalent to the $c\to 0$ limit of Heine's transformation 
\cite[Equation (III.2)]{GR04}
\[
{_2}\phi_1\bigg[\genfrac{}{}{0pt}{}{a,b}{c};q,z\bigg]=
\frac{(c/b,bz)_{\infty}}{(c,z)_{\infty}}\,
{_2}\phi_1\bigg[\genfrac{}{}{0pt}{}{abz/c,b}{bz};q,\frac{c}{b}\bigg]
\]
with $(a,b,z)\mapsto (-q^{1/2}u/w,-q^{1/2}u/z,wz)$.
For $n>1$, however, proving the above appears rather nontrivial.

There is however a second approach based on the following formula
for modified Hall--Littlewood polynomials \cite{Kirillov00,WZ12}:
\[
P'_{\la}(x;q)=\sum 
\prod_{j\geq 1} \bigg(\frac{1}{(q)_{\mu_j^{(0)}-\mu_{j+1}^{(0)}}}
\prod_{i=1}^n 
x_i^{\mu^{(i-1)}_j-\mu^{(i)}_j}
q^{\binom{\mu^{(i-1)}_j-\mu^{(i)}_j}{2}} 
\qbin{\mu^{(i-1)}_j-\mu^{(i)}_{j+1}}{\mu^{(i-1)}_j-\mu^{(i)}_j}\bigg),
\]
where the sum is over 
$0=\mu^{(n)}\subseteq\dots\subseteq\mu^{(1)}\subseteq\mu^{(0)}=\la'$.
This can be used to compute the left-hand side of \eqref{Eq_Fdn} for $m=1$
as follows.
Introduce new summation indices $k_0,\dots,k_{n-1}$ and $r_1,\dots,r_n$ by 
\[
\mu^{(i)}=(r_{i+1}+\cdots+r_n-k_{i+1}-\cdots-k_n,
r_{i+1}+\cdots+r_n-k_i-\cdots-k_n),
\]
where $k_n:=0$. Then
\begin{multline*}
\LHS=\sum_{r\in\NN^n} \frac{\prod_{i=1}^n u^{2(-1)^{i-1} r_i} 
q^{r_i^2}}{(q)_{r_n}}
\bigg(\sum_{\ell,k_0\geq 0} (zq^{1/2-r_1}/u)^{k_0} \Big(\frac{w}{z}\Big)^{\ell}
q^{\binom{k_0}{2}} 
\qbin{r_1}{k_0}\qbin{k_0}{\ell} \bigg) \\ \times
\prod_{i=1}^{n-1}\bigg(\sum_{k_i\geq 0}
\frac{q^{k_i(k_i-r_i-r_{i+1})}}{(q)_{r_i-k_i}}\qbin{r_{i+1}}{k_i}\bigg).
\end{multline*}
The sum over $k_i$ (for $1\leq i\leq n-1)$ can be carried out by the 
$q$-Chu--Vandermonde sum \cite[Equation (3.3.10)]{Andrews76} to yield 
$q^{-r_ir_{i+1}}/(q)_{r_i}$.
Then shifting $(r_1,k_0)\mapsto (r_1+\ell,k_0+\ell)$ we can successively
sum over $k_0$ and $\ell$ by the $q$-binomial theorem, resulting in
\[
\LHS=\sum_{r\in\NN^n} \big({-}zq^{1/2-r_1}/u\big)_{r_1}
\big({-}uwq^{1/2+r_1-r_2}\big)_{\infty}
\prod_{i=1}^n \frac{u^{2(-1)^{i-1} r_i} q^{r_i^2-r_i r_{i+1}}}{(q)_{r_i}}.
\]
This is also
\begin{multline*}
\LHS=\sum_{r_1,r_3,\dots,r_n=0}^{\infty} \,
\prod_{\substack{i=1 \\ i\neq 2}}^n 
\frac{u^{2(-1)^{i-1} r_i} q^{r_i^2-r_i r_{i+1}}}{(q)_{r_i}}
\\ \times
\big({-}zq^{1/2-r_1}/u\big)_{r_1}
\big({-}uwq^{1/2+r_1}\big)_{\infty} \,
{_1}\phi_1\bigg[\genfrac{}{}{0pt}{}{-q^{1/2-r_1}/(uw)}{0};
q,-\frac{wq^{1/2-r_3}}{u}\bigg].
\end{multline*}
By ${_1}\phi_1(a;0;q,z)=(z)_{\infty}\, {_0}\phi_1(\text{--}
\hspace{0.5pt};z;q,az)$ 
\cite[Equation (III.4)]{GR04} this can be transformed into
\begin{multline*}
\LHS=\sum_{r\in\NN^n} \prod_{i=1}^n 
\frac{u^{2(-1)^{i-1} r_i} q^{r_i^2-r_i r_{i+1}}}{(q)_{r_i}} \\ \times
\big({-}zq^{1/2-r_1}/u\big)_{r_1} \big({-}uwq^{1/2+r_1}\big)_{\infty} 
\big({-}wq^{1/2+r_2-r_3}/u)_{\infty}.
\end{multline*}
We now simply keep iterating the above transformation, first on $r_3$, 
then on $r_4$ and so on, until we arrive at
\[
\LHS=\sum_{r\in\NN^n} 
\big({-}zq^{1/2-r_1}/u\big)_{r_1}
\prod_{i=1}^n 
\frac{u^{2(-1)^{i-1} r_i} q^{r_i^2-r_i r_{i+1}}}{(q)_{r_i}} \,
\big({-}u_iwq^{1/2+r_i}\big)_{\infty}.
\]
This is equivalent to \eqref{Eq_Fded1}, completing the proof.
\end{proof}

\section{The $\C_n$ Andrews transformation}\label{Sec_Cn-Andrews}

Andrews' multiple series transformation \cite{Andrews75} is one of the 
most complicated results in all of the theory of basic hypergeometric series.
It is also one of the most useful; it implies
many important partition and Rogers--Ramanujan-type identities
\cite{Andrews75} and has recently played a major role in answering
deep arithmetic questions related to the Riemann zeta function, 
see e.g., \cite{JM10,KR07,KR07b,Zudilin11}.

In this section we apply the Milne--Lilly $\C_n$ Bailey lemma
to prove a $\C_n$-analogue of Andrews' transformation. 
This result in itself is too complicated to be of much independent 
interest, but as we will see in Section~\ref{Sec_5}, 
characters of affine Lie algebras arise through 
specialisation, allowing us to prove the claims of the introduction.

\subsection{The Milne--Lilly $\C_n$ Bailey lemma}

The Bailey lemma is a standard tool in the theory of basic hypergeometric
series, see e.g., \cite{Andrews84,Andrews85,Andrews01,AAR99,Warnaar01}.
The generalisation of the Bailey machinery to the $\C_n$ (as well as 
$\A_n$) root system was developed by Milne and Lilly in a series
of papers \cite{ML92,LM93,ML95}. (Quite a different Bailey lemma
for the non-reduced root system $\mathrm{BC}_n$ was recently discovered
by Coskun \cite{Coskun}.)
We begin with the definition of a $\C_n$ Bailey pair, albeit using
a slightly different normalisation than Milne and Lilly.
Two sequences $\alpha=(\alpha_N)_{N\in\NN^n}$ and
$\beta=(\beta_N)_{N\in\NN^n}$ are said to form a $\C_n$
Bailey pair if
\begin{equation}\label{Eq_BP}
\beta_N=\sum_{0\subseteq r\subseteq N} \alpha_r
\prod_{i,j=1}^n
\frac{1}{(qx_i/x_j)_{N_i-r_j}(qx_ix_j)_{N_i+r_j}},
\end{equation}
where we remind the reader that $0\subseteq r\subseteq N$ stands for
$0\leq r_i\leq N_i$ for $i=1,\dots,n$.
The above definition may be inverted, expressing $\alpha$ in terms of $\beta$:
\begin{multline}\label{Eq_inversion}
\alpha_N=
\frac{\Delta_{\C}(xq^N)}{\Delta_{\C}(x)}
\sum_{0\subseteq r\subseteq N} \beta_r \,q^{-(n-1)\abs{r}}
\prod_{1\leq i<j\leq n}
\frac{x_iq^{r_i}-x_jq^{r_j}}{x_i-x_j}\cdot
\frac{1-x_ix_jq^{r_i+r_j}}{1-x_ix_j} \\
\times 
\prod_{i,j=1}^n \Big({-}\frac{x_i}{x_j}\Big)^{N_i-r_j} 
q^{\binom{N_i-r_j}{2}}\frac{(x_ix_j)_{N_i+r_j}}{(qx_i/x_j)_{N_i-r_j}}.
\end{multline}
The most important ingredient of the theory is the Bailey lemma,
which generates an infinite sequence of Bailey pairs from a given seed. 
Unfortunately Milne and Lilly's $\C_n$ Bailey lemma, first stated as
\cite[Equation 2.5]{ML92} and copied verbatim in \cite{LM93} and 
\cite{ML95} contains a minor typographical error in the expression for
$\beta'_N$. In the following this has been corrected.
\begin{lemma}[$\C_n$ Bailey lemma]\label{Lem_CnBailey}
If $(\alpha,\beta)$ is a $\C_n$ Bailey pair,
then so is the new pair $(\alpha',\beta')$ given by
\begin{align*}
\alpha'_N&=\alpha_N\prod_{i=1}^n\frac{(bx_i,cx_i)_{N_i}}
{(qx_i/b,qx_i/c)_{N_i}}\,\Big(\frac{q}{bc}\Big)^{N_i},\\
\beta'_N&=\sum_{0\subseteq r\subseteq N} \beta_r \, 
(q/bc)_{\abs{N}-\abs{r}}\,\Big(\frac{q}{bc}\Big)^{\abs{r}}
\prod_{i=1}^n \frac{(bx_i,cx_i)_{r_i}}
{(qx_i/b,qx_i/c)_{N_i}} \\
&\qquad\quad\times
\prod_{1\leq i<j\leq n} \frac{(qx_ix_j)_{r_i+r_j}}{(qx_ix_j)_{N_i+N_j}}
\prod_{i,j=1}^n\frac{(qx_i/x_j)_{r_i-r_j}}
{(qx_i/x_j)_{N_i-r_j}},
\end{align*}
where $b,c$ are indeterminates.
\end{lemma}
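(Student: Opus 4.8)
The plan is to verify directly that $(\alpha',\beta')$ obeys the defining relation \eqref{Eq_BP}, i.e.\ that
\[
\beta'_N=\sum_{0\subseteq k\subseteq N}\alpha'_k
\prod_{i,j=1}^n\frac{1}{(qx_i/x_j)_{N_i-k_j}\,(qx_ix_j)_{N_i+k_j}}.
\]
First I would take the stated formula for $\beta'_N$ as a sum over $0\subseteq r\subseteq N$ and replace $\beta_r$ by its value $\sum_{0\subseteq k\subseteq r}\alpha_k\prod_{i,j}1/[(qx_i/x_j)_{r_i-k_j}(qx_ix_j)_{r_i+k_j}]$ supplied by \eqref{Eq_BP}. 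This produces a double sum over the chain $0\subseteq k\subseteq r\subseteq N$, and the key structural move is to interchange the two summations, summing over $r$ first with $k$ held fixed. One then pulls $\alpha_k$, together with every factor depending only on $N$ (such as $\prod_i(qx_i/b,qx_i/c)_{N_i}^{-1}$ and $\prod_{i<j}(qx_ix_j)_{N_i+N_j}^{-1}$), out of the inner $r$-sum.

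\textbf{The core summation.} What remains inside is a terminating, very-well-poised sum over $r$ with $k\subseteq r\subseteq N$, carrying the parameters $b,c$ through the factors $(bx_i,cx_i)_{r_i}$ and $(q/bc)_{\abs N-\abs r}(q/bc)^{\abs r}$, together with the $\C_n$ products $(qx_ix_j)_{r_i+r_j}$, $(qx_i/x_j)_{r_i-r_j}$ and the denominators inherited from both $\beta'_N$ and the Bailey relation. I would recognise this as an instance of Milne's $\C_n$ $q$-Pfaff--Saalsch\"utz summation, the multivariable analogue of the classical $q$-Saalsch\"utz evaluation that drives the rank-one Bailey lemma. Summing it in closed form should collapse the inner sum to $\prod_i(bx_i,cx_i)_{k_i}(qx_i/b,qx_i/c)_{k_i}^{-1}(q/bc)^{k_i}$ times $\prod_{i,j}1/[(qx_i/x_j)_{N_i-k_j}(qx_ix_j)_{N_i+k_j}]$, which is exactly $\alpha'_k/\alpha_k$ times the Bailey kernel. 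Substituting back then yields the displayed relation and proves that $(\alpha',\beta')$ is a $\C_n$ Bailey pair.

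\textbf{The main obstacle.} The delicate part is the closed-form evaluation of that inner $\C_n$ sum and, equally, the bookkeeping required to bring it into the precise hypothesis form of Milne's summation: the well-poised products $(qx_ix_j)_{\bullet}$ and $(qx_i/x_j)_{\bullet}$ must be reorganised (splitting off the $k$-dependent shifts and using identities such as $(a)_{m+\ell}=(a)_m(aq^m)_{\ell}$) so that the summand matches the kernel of the $\C_n$ Saalsch\"utz theorem. This is precisely where the corrected shape of $\beta'_N$ is essential: only with the well-poised factors written as stated does the $r$-sum telescope to the desired product, which is why the original typographical error in \cite{ML92,LM93,ML95} must be repaired before the computation closes. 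As a sanity check I would specialise to $n=1$, where the $\prod_{i<j}$ factors disappear and the identity degenerates to the classical Bailey lemma, recovering the familiar proof via the $q$-Saalsch\"utz sum; the inversion \eqref{Eq_inversion} offers an independent route for cross-checking the parameter dependence on $b$ and $c$.
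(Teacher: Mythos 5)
The paper does not actually prove this lemma: it is quoted from Milne and Lilly \cite{ML92,LM93,ML95} with only a typographical error in $\beta'_N$ repaired, so there is no internal proof to compare against. Your outline is nevertheless the correct and standard one, and it is essentially the argument given in the cited sources: substitute \eqref{Eq_BP} for $\beta_r$, interchange the sums over the chain $0\subseteq k\subseteq r\subseteq N$, and evaluate the resulting terminating inner $r$-sum by the Lilly--Milne $\C_n$ $q$-Pfaff--Saalsch\"utz theorem (the multivariable balanced ${}_3\phi_2$ evaluation of \cite{LM93}), exactly as the rank-one Bailey lemma follows from the classical $q$-Saalsch\"utz sum. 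Two small points of care. First, your bookkeeping is slightly inconsistent: having pulled $\prod_i(qx_i/b,qx_i/c)_{N_i}^{-1}$ and $\prod_{i<j}(qx_ix_j)_{N_i+N_j}^{-1}$ outside the $r$-sum, the inner sum must evaluate to those factors' reciprocals times $(\alpha'_k/\alpha_k)$ times the Bailey kernel, not to $(\alpha'_k/\alpha_k)$ times the kernel itself as written; this washes out once the computation is done explicitly, but as stated the two sentences contradict each other. Second, the real labour---and the only place a genuine gap could hide---is matching the summand, after splitting factors such as $(qx_i/x_j)_{r_i-k_j}$ and $(qx_ix_j)_{r_i+k_j}$ via $(a)_{m+\ell}=(a)_m(aq^m)_\ell$, to the precise hypothesis of the $\C_n$ Saalsch\"utz theorem; you assert this recognition rather than perform it, so the sketch is a correct roadmap rather than a complete proof. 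Your $n=1$ sanity check (with $a=x_1^2$ in the classical normalisation) and the cross-check against \eqref{Eq_inversion} are both apt.
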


Equipped with the above lemma it is straightforward to obtain the 
$\C_n$-analogue of Andrews' transformation formula.

\begin{theorem}[$\C_n$ Andrews transformation]\label{Thm_CnAndrews}
For $m$ a nonnegative integer and $N\in\NN^n$,
\begin{align}\label{Eq_Cn-Andrews}
&\sum_{0\subseteq r\subseteq N}\frac{\Delta_{\C}(x q^r)}{\Delta_{\C}(x)}\,
\prod_{i=1}^n \Bigg[\prod_{\ell=1}^{m+1} \frac{(b_{\ell}x_i,c_{\ell}x_i)_{r_i}}
{(qx_i/b_{\ell},qx_i/c_{\ell})_{r_i}}
\Big(\frac{q}{b_{\ell}c_{\ell}}\Big)^{r_i} 
 \\ &\qquad\qquad\qquad\qquad\qquad\times 
\prod_{j=1}^n 
\frac{(q^{-N_j}x_i/x_j,x_i x_j)_{r_i}}{(q x_i/x_j,q^{N_j+1}x_ix_j)_{r_i}}\,
q^{N_jr_i} 
\Bigg] \notag \\
&=\prod_{i,j=1}^n (qx_ix_j)_{N_i}
\prod_{1\leq i<j\leq n} \frac{1}{(qx_ix_j)_{N_i+N_j}} \notag \\
&\quad\times\sum_{r^{(1)},\dots,r^{(m)}\in\NN^n}
\prod_{i,j=1}^n 
\frac{(qx_i/x_j)_{N_i}}{(qx_i/x_j)_{N_i-r_j^{(1)}}}
\prod_{\ell=1}^m f_{r^{(\ell)},r^{(\ell+1)}}^{(0)}(x;q) \notag \\ 
&\quad\qquad\times
\prod_{\ell=1}^{m+1}\Bigg[
(q/b_{\ell}c_{\ell})_{\abs{r^{(\ell-1)}}-\abs{r^{(\ell)}}} 
\Big(\frac{q}{b_{\ell}c_{\ell}}\Big)^{\abs{r^{(\ell)}}} 
\prod_{i=1}^n 
\frac{(b_{\ell}x_i,c_{\ell}x_i)_{r^{(\ell)}_i}}
{(qx_i/b_{\ell},qx_i/c_{\ell})_{r^{(\ell-1)}_i}} \Bigg] , \notag
\end{align}
where $r^{(0)}:=N$ and $r^{(m+1)}:=0$.
\end{theorem}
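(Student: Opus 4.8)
The plan is to obtain \eqref{Eq_Cn-Andrews} directly from the $\C_n$ Bailey lemma (Lemma~\ref{Lem_CnBailey}) by the standard Bailey-chain argument: iterate the lemma on a unit seed and then read off the defining relation \eqref{Eq_BP} of the resulting Bailey pair. I would start from the unit $\C_n$ Bailey pair $(\alpha^{(0)},\beta^{(0)})$ with $\beta^{(0)}_r=\delta_{r,0}$, whose companion $\alpha^{(0)}_r$ is the explicit sequence produced by the inversion \eqref{Eq_inversion}; being the inverse of $\delta_{r,0}$, this pair is automatically a Bailey pair. The role of this seed is that the $N$-independent $\alpha^{(0)}_r$, once multiplied by the kernel $\prod_{i,j}1/\big[(qx_i/x_j)_{N_i-r_j}(qx_ix_j)_{N_i+r_j}\big]$ of \eqref{Eq_BP}, collapses into exactly the truncating factor $\prod_{i,j}(q^{-N_j}x_i/x_j,x_ix_j)_{r_i}\big/(qx_i/x_j,q^{N_j+1}x_ix_j)_{r_i}\,q^{N_jr_i}$ appearing on the left of \eqref{Eq_Cn-Andrews}.

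Next I would apply Lemma~\ref{Lem_CnBailey} a total of $m+1$ times, with the pairs $(b_1,c_1),\dots,(b_{m+1},c_{m+1})$ applied so that $(b_\ell,c_\ell)$ governs the passage $r^{(\ell-1)}\to r^{(\ell)}$, producing a Bailey pair $(\alpha^{(m+1)},\beta^{(m+1)})$. On the $\alpha$-side the lemma acts purely multiplicatively, giving $\alpha^{(m+1)}_r=\alpha^{(0)}_r\prod_{\ell=1}^{m+1}\prod_{i=1}^n (b_\ell x_i,c_\ell x_i)_{r_i}\big/(qx_i/b_\ell,qx_i/c_\ell)_{r_i}\,(q/b_\ell c_\ell)^{r_i}$; feeding this into \eqref{Eq_BP} reproduces the full left-hand side of \eqref{Eq_Cn-Andrews}, the $m+1$ bracketed parameter factors and the truncating factor being accounted for by the previous paragraph. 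On the $\beta$-side, iterating the recursion for $\beta'_N$ yields a nested sum over $N=r^{(0)}\supseteq r^{(1)}\supseteq\cdots\supseteq r^{(m)}\supseteq r^{(m+1)}=0$, where the $\delta$-seed pins the innermost vector to $0$ and leaves $r^{(1)},\dots,r^{(m)}$ as the free indices of the right-hand side.

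The main work --- and the step I expect to be the real obstacle --- is the bookkeeping that identifies this iterated $\beta^{(m+1)}_N$ with the right-hand side of \eqref{Eq_Cn-Andrews}. At each level $\ell$ the lemma contributes the ratio $\prod_{i,j}(qx_i/x_j)_{r^{(\ell)}_i-r^{(\ell)}_j}\big/(qx_i/x_j)_{r^{(\ell-1)}_i-r^{(\ell)}_j}$, the symmetric block $\prod_{i<j}(qx_ix_j)_{r^{(\ell)}_i+r^{(\ell)}_j}\big/(qx_ix_j)_{r^{(\ell-1)}_i+r^{(\ell-1)}_j}$, and the scalar $(q/b_\ell c_\ell)_{\abs{r^{(\ell-1)}}-\abs{r^{(\ell)}}}(q/b_\ell c_\ell)^{\abs{r^{(\ell)}}}$ together with the $b_\ell,c_\ell$ factorials. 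I must reorganise the accumulated ratios so that consecutive levels telescope into $\prod_{\ell=1}^m f_{r^{(\ell)},r^{(\ell+1)}}^{(0)}(x;q)$ in the notation of \eqref{Eq_frs}, with the outermost level surviving as the boundary factor $\prod_{i,j}(qx_i/x_j)_{N_i}\big/(qx_i/x_j)_{N_i-r^{(1)}_j}$; the symmetric blocks telescope (using $r^{(m+1)}=0$) to $\prod_{i<j}(qx_ix_j)_{N_i+N_j}^{-1}$, which together with the $\prod_{i,j}(qx_ix_j)_{N_i}$ released in the seed reversal of the preceding step forms the stated prefactor. The scalar pieces assemble directly into the bracketed product $\prod_{\ell=1}^{m+1}[\cdots]$. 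The genuinely delicate point is the multi-index $q$-shifted-factorial reversal underlying both the seed collapse and the telescoping, since the relevant indices ($N_i-r_j$ and $N_i+r_j$ versus $r_i$ and $N_j$) do not match termwise and must be resolved using the $\C_n$ inversion embodied in \eqref{Eq_inversion}.

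Finally, equating the two evaluations of $\beta^{(m+1)}_N$ --- the definition \eqref{Eq_BP}, which is the left-hand side of \eqref{Eq_Cn-Andrews}, against the iterated form, which is its right-hand side --- establishes the theorem. Since Lemma~\ref{Lem_CnBailey} is assumed, no further summation theorem enters; the only real risk is an arithmetic slip in the reversal and in matching the prefactors, so I would first verify the identity in the rank-one case $n=1$, where it must reduce to Andrews' classical multiple series transformation, and at small $m$, before committing to the general manipulation.
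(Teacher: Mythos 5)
Your proposal follows essentially the same route as the paper: seed the chain with the unit $\C_n$ Bailey pair obtained by setting $\beta_N=\delta_{N,0}$ in \eqref{Eq_inversion}, iterate Lemma~\ref{Lem_CnBailey} a total of $m+1$ times with parameters $(b_\ell,c_\ell)$ (the $\alpha$-side being purely multiplicative and the $\beta$-side telescoping into the nested sum with $r^{(m+1)}=0$), and then substitute the resulting pair back into \eqref{Eq_BP}. The bookkeeping you flag --- the reversal of $q$-shifted factorials turning $\alpha_r$ times the kernel into the truncating factor, and the telescoping of the $(qx_ix_j)$ and $(qx_i/x_j)$ blocks into the stated prefactors --- is exactly what the paper's (terse) induction carries out, so the argument is sound.
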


For $m=0$ this is Lilly and Milne's $\C_n$ analogue of 
Jackson's $_6\phi_5$ summation \cite[Theorem 2.11]{LM93} and for
$m=1$ it is Milne's $\C_n$ analogue of Watson's $q$-Whipple 
transformation 
\cite[Theorem A.3]{Milne94} (see also \cite[Theorem 6.6]{ML95}).

\begin{proof}[Proof of Theorem~\ref{Thm_CnAndrews}]
Taking $\beta_N=\delta_{N,0}=\prod_{i=1}^n \delta_{N_i,0}$
in \eqref{Eq_inversion} yields the $\C_n$ unit Bailey pair
\[
\alpha_N=
\frac{\Delta_{\C}(xq^N)}{\Delta_{\C}(x)}
\prod_{i,j=1}^n \Big({-}\frac{x_i}{x_j}\Big)^{N_i} 
q^{\binom{N_i}{2}}\frac{(x_ix_j)_{N_i}}{(qx_i/x_j)_{N_i}}
\quad\text{and}\quad \beta_N=\delta_{N,0}.
\]
Iterating this using the Bailey lemma and induction
we obtain the new Bailey pair
\begin{align*}
\alpha_N&=\frac{\Delta_{\C}(x q^N)}{\Delta_{\C}(x)}\,
\prod_{i=1}^n \Bigg[\prod_{\ell=1}^{m+1} \frac{(b_{\ell}x_i,c_{\ell}x_i)_{N_i}}
{(qx_i/b_{\ell},qx_i/c_{\ell})_{N_i}}
\Big(\frac{q}{b_{\ell}c_{\ell}}\Big)^{N_i} \\
&\qquad\qquad\qquad\qquad\qquad\qquad\times \prod_{j=1}^n 
\frac{(x_i x_j)_{N_i}}{(q x_i/x_j)_{N_i}}\,
\Big({-}\frac{x_i}{x_j}\Big)^{N_i}q^{\binom{N_i}{2}}\Bigg],\notag\\
\beta_N&=\prod_{1\leq i<j\leq n}\frac{1}{(qx_ix_j)_{N_i+N_j}}\\
&\quad\times\sum_{r^{(1)},\dots,r^{(m)}\in\NN^n}
\prod_{i,j=1}^n \frac{1}{(qx_i/x_j)_{N_i-r^{(1)}_j}}
\prod_{\ell=1}^m f_{r^{(\ell)},r^{(\ell+1)}}^{(0)}(x;q) \\
&\quad\qquad\qquad\times
\prod_{\ell=1}^{m+1}\Bigg[
(q/b_{\ell}c_{\ell})_{\abs{r^{(\ell-1)}}-\abs{r^{(\ell)}}}
\Big(\frac{q}{b_{\ell}c_{\ell}}\Big)^{\abs{r^{(\ell)}}} 
\prod_{i=1}^n 
\frac{(b_{\ell}x_i,c_{\ell}x_i)_{r^{(\ell)}_i}}
{(qx_i/b_{\ell},qx_i/c_{\ell})_{r^{(\ell-1)}_i}}\Bigg].
\end{align*}
After substitution in \eqref{Eq_BP} the claim follows. 
\end{proof}

\section{The $\C_n$ Andrews transformation and character formulas}\label{Sec_5}

Isolating the variables $b_1,c_1$, we write the $\C_n$ Andrews 
transformation \eqref{Eq_Cn-Andrews} as
\begin{equation}\label{Eq_LR}
L_N(x;b_1,c_1;b_2,\dots,c_{m+1};q)=
R_N(x;b_1,c_1;b_2,\dots,c_{m+1};q)
\end{equation}
where $L_N$ stands for the left-hand side of \eqref{Eq_Cn-Andrews} and
$R_N$ for the right-hand side.
The aim of this section is to show that \eqref{Eq_LR} 
implies Theorem~\ref{Thm_Cn-character-formula} of the introduction.
After first showing that
\[
R_m(x;b,c;q):=R_{(\infty^n)}(x;b,c;
\underbrace{\infty,\dots,\infty}_{2m \text{ times}};q)
\]
can be expressed in terms of the modified Hall--Littlewood 
polynomials $P_{\la}'$, we will prove that if  
\begin{equation}\label{Eq_Lk}
L_m(x;b,c;q):=L_{(\infty^n)}(x;b,c;
\underbrace{\infty,\dots,\infty}_{2m \text{ times}};q),
\end{equation}
then $L_m\big(x^{\pm};b,c;q\big)$ is a function which
unifies certain characters of $\C_n^{(1)}$, $\A_{2n}^{(2)}$ 
and $\D_{n+1}^{(2)}$ in their in Weyl--Kac representation.
In particular, the identity 
\begin{equation}\label{Eq_LkRk}
L_m\big(x^{\pm};b,c;q\big)=R_m\big(x^{\pm};b,c;q\big)
\end{equation}
includes \eqref{Eq_Cn-char} and \eqref{Eq_A2n2-char}
of the introduction as special limiting cases. 

\subsection{The right-hand side of the $\C_n$ Andrews transformation}

Since the right-hand side of \eqref{Eq_LR} is a rational function 
we may let $b_2,c_2,\dots,b_{m+1},c_{m+1}$ tend to infinity
for fixed $N\in\NN^n$. To then take the large $N$ limit
we need to assume that $\abs{q/b_1c_1}<1$. By an appeal to
dominated convergence this yields
\begin{multline*}
R_m(x;b,c;q)=(q/bc)_{\infty} D(x;b,c;q) \\
\times\sum_{r^{(1)},\dots,r^{(m)}\in\NN^n}
\prod_{i=1}^n 
\big(q^{1-r^{(1)}_i}/bx_i,q^{1-r^{(1)}_i}/cx_i\big)_{r^{(1)}_i}
\prod_{\ell=1}^m q^{\abs{r^{(\ell)}}}
f_{r^{(\ell)},r^{(\ell+1)}}^{(2)}(x;q),
\end{multline*}
where $r^{(m+1)}:=0$, $\abs{q/bc}<1$ and
\[
D(x;b,c;q):=\prod_{i=1}^n \frac{(qx_i^2)_{\infty}}{(qx_i/b,qx_i/c)_{\infty}} 
\prod_{1\leq i<j\leq n} (qx_ix_j)_{\infty}.
\]
If we now take \eqref{Eq_sumM},
replace $(x,w,z)\mapsto (q^{1/2}x,-q^{1/2}/b,-q^{1/2}/c)$ and
use that $f_{r,s}^{(2)}(q^{1/2}x;q)=q^{\abs{r}} f_{r,s}^{(2)}(x;q)$, 
then the right-hand side of \eqref{Eq_sumM} matches the above 
expression for $R_m(x;b,c;q)$, except for the prefactor $D(x;b,c;q)$. 
Hence, for $\abs{q/bc}<1$,
\begin{equation}\label{Eq_Rk}
R_m(x;b,c;q)=D(x;b,c;q)
\sum_{\substack{\la \\[1.5pt] \la_1\leq 2m}} q^{\abs{\la}/2}
h_{\la}^{(m)}\big({-}q^{1/2}/b,-q^{1/2}/c;q\big) P'_{\la}(x;q).
\end{equation}

\subsection{The left-hand side of the $\C_n$ Andrews transformation}

Because in our initial considerations the parameters 
$b_1,c_1,\dots,b_{m+1},c_{m+1}$
and $q$ play a passive role we suppress their dependence, writing 
$L_N(x)$ instead of $L_N(x;b_1,c_1;b_2,\dots,c_k;q)$.
To transform $L_N(x)$ into a function that resembles the Weyl--Kac character 
formula we must achieve the appropriate Weyl group symmetry. 
As will be shown below, this can be realised by doubling the rank
to $2n$ and by then reducing this back to $n$ by taking a limit in 
which $n$ distinct pairs of variables tend to $1$ as follows:
\[
\lim_{y_1\to x_1^{-1},\dots,y_n\to x_n^{-1}} 
L_{(N_1,M_1,\dots,N_n,M_n)}\big(x_1,y_1,\dots,x_n,y_n\big)
=:L_{M,N}(x).
\]
We remark that this limiting process is highly non-trivial due to the
occurrence of the denominator term $\Delta_{\C}(x)$ in the
summand of $L_N(x)$. Indeed, $\Delta_{\C}(x)$ vanishes
whenever the product of two of its variables equals $1$.
For later purposes we will also consider the following 
limit in the case of an odd number of variables:
\begin{multline*}
\lim_{y_1\to x_1^{-1},\dots,y_{n-1}\to x_{n-1}^{-1},x_n\to 1} 
L_{(N_1,M_1,\dots,N_{n-1},M_{n-1},N_n)}
\big(x_1,y_1,\dots,x_{n-1},y_{n-1},x_n\big)\\
=:\hat{L}_{M,N}(\hat{x}),
\end{multline*}
where $\hat{x}=(x_1,\dots,x_{n-1})$.

\begin{proposition}\label{Prop_lim}
For $x=(x_1,\dots,x_n)$ and $M,N\in\NN^n$,
\begin{subequations}
\begin{multline}\label{Eq_lim-Even}
L_{M,N}(x)=\sum_{r\in\Z^n}
\frac{\Delta_{\C}(x q^r)}{\Delta_{\C}(x)}\,
\prod_{i=1}^n \Bigg[\prod_{\ell=1}^{m+1} \frac{(b_{\ell}x_i,c_{\ell}x_i)_{r_i}}
{(qx_i/b_{\ell},qx_i/c_{\ell})_{r_i}}
\Big(\frac{q}{b_{\ell}c_{\ell}}\Big)^{r_i} \\ 
\times
\prod_{j=1}^n 
\frac{(q^{-N_j}x_i/x_j,q^{-M_j}x_i x_j)_{r_i}}
{(q^{M_j+1}x_i/x_j,q^{N_j+1}x_ix_j)_{r_i}}\, q^{(M_j+N_j)r_i} 
\Bigg],
\end{multline}
and for $\hat{x}=(x_1,\dots,x_{n-1})$, $x=(x_1,\dots,x_{n-1},1)$, 
$M\in\NN^{n-1}$ and $N\in\NN^n$,
\begin{multline}\label{Eq_lim-Odd}
\hat{L}_{M,N}(\hat{x})=
\sum_{r\in\Z^n} \frac{\Delta_{\B}(-xq^r)}{\Delta_{\B}(-x)} 
\prod_{i=1}^n \Bigg[
\prod_{\ell=1}^{m+1} \frac{(b_{\ell}x_i,c_{\ell}x_i)_{r_i}}
{(qx_i/b_{\ell},qx_i/c_{\ell})_{r_i}}
\Big(\frac{q}{b_{\ell}c_{\ell}}\Big)^{r_i} \\ 
\times
\prod_{j=1}^{n-1} 
\frac{(q^{-M_j}x_i x_j)_{r_i}}
{(q^{M_j+1}x_i/x_j)_{r_i}}\, q^{M_jr_i} 
\prod_{j=1}^n 
\frac{(q^{-N_j}x_i/x_j)_{r_i}}
{(q^{N_j+1}x_ix_j)_{r_i}}\, q^{N_jr_i} \Bigg].
\end{multline}
\end{subequations}
\end{proposition}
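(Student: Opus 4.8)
The plan is to derive both identities from the left-hand side $L_N$ of the $\C_n$ Andrews transformation \eqref{Eq_Cn-Andrews}, taken in $2n$ variables $z=(z_1,\dots,z_{2n})=(x_1,y_1,\dots,x_n,y_n)$ with $\tilde N=(N_1,M_1,\dots,N_n,M_n)$, and then to carry out the stated coalescences term by term. For $0\subseteq\tilde r\subseteq\tilde N$ write $(\tilde r_{2i-1},\tilde r_{2i})=(a_i,b_i)$, so that $a_i\in\{0,\dots,N_i\}$ and $b_i\in\{0,\dots,M_i\}$. Under $y_i\to x_i^{-1}$ one has $z_{2i-1}z_{2i}=x_iy_i\to1$, and the only factors of the summand that become singular or vanish are those built from the product of the two members of a single pair; all cross-pair quantities such as $x_ix_j-1$ and $x_iy_j-1$ stay generic. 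Hence the $n$ limits decouple and may be performed iteratively, one pair at a time, with the remaining variables in general position throughout. The essential difficulty is that $\Delta_{\C}(z)$, which divides every summand, has a simple zero coming from its factor $z_{2i-1}z_{2i}-1$, so I must verify term by term how this pole is resolved.

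First I would do the pole--zero accounting for one pair, writing $u=z_{2i-1}$, $v=z_{2i}$ with $uv\to1$. The factor that $\Delta_{\C}(zq^{\tilde r})/\Delta_{\C}(z)$ contributes from this pair is $(uvq^{a_i+b_i}-1)/(uv-1)$, a simple pole exactly when $a_i+b_i\geq1$. Working against it, the summand carries the two numerator $q$-shifted factorials $(z_{2i-1}z_{2i})_{a_i}=(uv)_{a_i}$ and $(z_{2i}z_{2i-1})_{b_i}=(uv)_{b_i}$, which tend to $(1)_{a_i}$ and $(1)_{b_i}$ and therefore vanish when $a_i\geq1$ and when $b_i\geq1$ respectively. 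Comparing these orders, a surviving term must have at most one of $a_i,b_i$ nonzero: whenever both are positive the double zero overwhelms the simple pole and the term drops out. Setting $r_i:=a_i-b_i$ thus puts the surviving pairs $(a_i,b_i)$ in bijection with integers $r_i\in\{-M_i,\dots,N_i\}$, which is precisely the collapse of the $2n$ nonnegative indices of the pre-limit sum into the $n$ signed indices $r\in\Z^n$ appearing in \eqref{Eq_lim-Even}.

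Next I would compute the finite value of each surviving term by cancelling the simple pole against the matching simple zero and setting $v=u^{-1}$ in the regular remainder. For $r_i\geq0$ (so $b_i=0$, $a_i=r_i$) the leftover factors reassemble directly into the $i$th factor of \eqref{Eq_lim-Even}; for $r_i<0$ (so $a_i=0$, $b_i=-r_i$) the same data must be transported through the negative-index rule $(\beta)_{-k}=(-1/\beta)^k q^{\binom{k+1}{2}}/(q/\beta)_k$ before it matches the factorials $(q^{-M_j}x_ix_j)_{r_i}$, $(q^{M_j+1}x_i/x_j)_{r_i}$ and the power $q^{(M_j+N_j)r_i}$ written there. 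The extension of the sum to all of $\Z^n$ is then harmless: the $j=i$ instance $(q^{-N_i})_{r_i}$ of the numerator factor vanishes for $r_i>N_i$, while the $j=i$ instance $(q^{M_i+1})_{r_i}$ of the denominator factor forces the summand to vanish for $r_i<-M_i$, so the $\Z^n$-sum truncates automatically to $\{-M_i,\dots,N_i\}^n$. I expect this regrouping---particularly the negative-index rewriting and checking that the global prefactor $\Delta_{\C}(xq^r)/\Delta_{\C}(x)$ in the original $n$ variables reconstitutes cleanly once all $n$ pairs have been coalesced---to be the main obstacle, since it is where the numerous $q$-shifted factorials must be recombined without slip.

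Finally, for \eqref{Eq_lim-Odd} I would run the first $n-1$ pair limits verbatim and then treat the unpaired variable $x_n\to1$ on its own. This point is the fixed locus $x_n=x_n^{-1}$ of the sign change in the hyperoctahedral group, and it is precisely where $\Delta_{\C}$ degenerates: its factor $1-x_n^2$ vanishes while the companion numerator factor $1-x_n^2q^{2r_n}$ governs the competing behaviour. Resolving the resulting singularity requires folding the last summation index about this fixed point, which turns the $\C$-type Vandermonde into the $\B$-type product $\Delta_{\B}(-xq^r)/\Delta_{\B}(-x)$---the sign being exactly what keeps $\Delta_{\B}(-x)$ nonzero at $x_n=1$---and lets $r_n$ range over all of $\Z$. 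This $\C\to\B$ degeneration at the reflection-fixed variable is the most delicate point of the odd case; structurally it mirrors the pair analysis above and needs no new ingredient, only careful control of the limit.
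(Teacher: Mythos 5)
Your proposal is correct and follows essentially the same route as the paper: the paper also performs the coalescences one pair at a time (its Lemma A.1), does the identical pole--zero accounting showing that at most one of the two indices in a pair can be positive, merges the surviving cases into a single signed index via the negative-index rule $(a)_{-n}/(b)_{-n}=(q/b)_n(b/a)^n/(q/a)_n$, and handles the odd case by a final $x_n\to 1$ limit in which the $r_n=0$ and $r_n>0$ terms are treated separately and then unfolded over $r_n\in\Z$ using the $r_n\mapsto -r_n$ symmetry of $\Delta_{\B}(-xq^r)/\Delta_{\B}(-x)$. The only cosmetic difference is that the paper packages the intermediate stages as explicitly defined hybrid sums $L^{(p)}_{M,N}$ so that the iteration is a clean induction.
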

A number of remarks are in order.
First of all we note that both summands vanish unless 
$-M\subseteq r\subseteq N$, i.e., $-M_i\leq r_i\leq N_i$ for all $i$ 
(where $M_n:=N_n$ in the case of \eqref{Eq_lim-Odd}).
Moreover, if we set $M_1=\dots=M_n=0$ in \eqref{Eq_lim-Even}
we recover $L_N(x)$.
Finally we note that the series on the right of \eqref{Eq_lim-Even} 
exhibits the desired symmetry, 
in that it is invariant under the natural action of the hyperoctahedral 
group. 
For example, for $n=2$,
\begin{alignat*}{3}
&L_{(M_1,M_2),(N_1,N_2)}(x_1,x_2)
&&=L_{(M_2,M_1),(N_2,N_1)}(x_2,x_1)&&= \\
&L_{(M_1,N_2),(N_1,M_2)}(x_1,x_2^{-1})
&&=L_{(N_2,M_1),(M_2,N_1)}(x_2^{-1},x_1)&&= \\
&L_{(N_1,M_2),(M_1,N_2)}(x_1^{-1},x_2)
&&=L_{(M_2,N_1),(N_2,M_1)}(x_2,x_1^{-1})&&= \\
&L_{(N_1,N_2),(M_1,M_2)}(x_1^{-1},x_2^{-1})
&&=L_{(N_2,N_1),(M_2,M_1)}(x_2^{-1},x_1^{-1}).\!\!\!&&
\end{alignat*}
The proof of Proposition~\ref{Prop_lim} is long and technical,
and has been relegated to the appendix.

\subsection{Proof of Theorem~\ref{Thm_Cn-character-formula} and 
related results}
Recall that for $x=(x_1,\dots,x_n)$ we abbreviate
$f(x_1,x_1^{-1},\dots,x_n,x_n^{-1})$ by $f\big(x^{\pm}\big)$.
By abuse of notation, 
for $x=(x_1,\dots,x_{n-1},1)$ we also denote
$f(x_1,x_1^{-1},\dots,x_{n-1},x_{n-1}^{-1},1)$ as
$f\big(x^{\pm}\big)$
(so that in this case $f\big(x^{\pm}\big)$ should not be
interpreted as $f(x_1,x_1^{-1},\dots,x_{n-1},x_{n-1}^{-1},1,1)$).

To obtain \eqref{Eq_LkRk} in a more explicit form, we let
$b_2,c_2,\dots,b_{m+1},c_{m+1}$ tend to infinity in \eqref{Eq_lim-Even}
followed by $M,N\to (\infty^n)$, and equate the resulting expression
with \eqref{Eq_Rk} with $x\mapsto x^{\pm}$. This gives
\eqref{Eq_thm-1} below.
By a similar computation starting from \eqref{Eq_lim-Odd} we obtain 
\eqref{Eq_thm-2}.

\begin{theorem}\label{Thm_main}
Let $m$ be a nonnegative integer and $\abs{q/bc}\leq 1$. Then
the following two identities hold:
\begin{subequations}\label{Eq_thm}
\begin{multline}\label{Eq_thm-1}
\frac{1}{D\big(x^{\pm};b,c;q\big)}
\sum_{r\in\Z^n}
\frac{\Delta_{\C}(x q^r)}{\Delta_{\C}(x)}\,
\prod_{i=1}^n \frac{(bx_i,cx_i)_{r_i}}{(qx_i/b,qx_i/c)_{r_i}}
\bigg(\frac{q^{1-n}}{bc}\bigg)^{r_i} \big(x_i^2 q^{r_i}\big)^{Kr_i} \\[2mm]
=\sum_{\substack{\la \\[1.5pt] \la_1\leq 2m}} q^{\abs{\la}/2}
h_{\la}^{(m)}\big({-}q^{1/2}/b,-q^{1/2}/c;q\big) 
P'_{\la}\big(x^{\pm};q\big),
\end{multline}
where $x=(x_1,\dots,x_n)$ and $K=m+n$, and
\begin{multline}\label{Eq_thm-2}
\frac{1}{D\big(x^{\pm};b,c;q\big)}
\sum_{r\in\Z^n} \frac{\Delta_{\B}(-x q^r)}{\Delta_{\B}(-x)} 
\prod_{i=1}^n \frac{(bx_i,cx_i)_{r_i}}{(qx_i/b,qx_i/c)_{r_i}}
\bigg({-}\frac{q^{3/2-n}}{bc}\bigg)^{r_i}
\big(x_i^2q^{r_i}\big)^{Kr_i} \\[2mm]
=\sum_{\substack{\la \\[1.5pt] \la_1\leq 2m}} q^{\abs{\la}/2}
h_{\la}^{(m)}\big({-}q^{1/2}/b,-q^{1/2}/c;q\big) 
P'_{\la}\big(x^{\pm};q\big),
\end{multline}
\end{subequations}
where $x=(x_1,\dots,x_{n-1},1)$ and $K=m+n-1/2$.
\end{theorem}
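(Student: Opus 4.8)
The plan is to read off both identities \eqref{Eq_thm-1} and \eqref{Eq_thm-2} as the explicit, fully specialised form of the abstract identity \eqref{Eq_LkRk}, working throughout at rank $2n$ where the $\C_n$ Andrews transformation \eqref{Eq_Cn-Andrews} reads $L_N=R_N$. I would apply the merging limit $y_i\to x_i^{-1}$ to both sides of this equality and only then send the auxiliary parameters $b_2,c_2,\dots,b_{m+1},c_{m+1}$ and the truncation vectors $M,N$ to infinity. On the left the merging is delicate and its outcome is exactly the content of Proposition~\ref{Prop_lim}, supplying the closed forms \eqref{Eq_lim-Even} (even number of variables) and \eqref{Eq_lim-Odd} (odd number); on the right the merging is continuous, so the limit is simply $R_m(x^{\pm};b,c;q)$, which \eqref{Eq_Rk} already expands in modified Hall--Littlewood polynomials.

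For the left-hand side I would start from \eqref{Eq_lim-Even} and carry out the parameter limits termwise. Using $(ax_i)_{r_i}\sim(-ax_i)^{r_i}q^{\binom{r_i}{2}}$ as $a\to\infty$, each of the $m$ surviving pairs $(b_\ell,c_\ell)$ with $\ell\geq 2$ reduces to $(x_i^2q^{r_i})^{r_i}$, while each of the $n$ index-$j$ ratios, after absorbing the balancing factor $q^{(M_j+N_j)r_i}$, reduces to $x_i^{2r_i}q^{r_i^2-r_i}$ as $M_j,N_j\to\infty$. Collecting the powers of $x_i^2q^{r_i}$ gives total exponent $Kr_i$ with $K=m+n$, and the leftover $q$-powers combine with the surviving $(q/bc)^{r_i}$ from $\ell=1$ into $(q^{1-n}/bc)^{r_i}$, the prefactor $\Delta_{\C}(xq^r)/\Delta_{\C}(x)$ passing through unchanged; this is precisely the summand on the left of \eqref{Eq_thm-1}. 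The support condition $-M\subseteq r\subseteq N$ noted after Proposition~\ref{Prop_lim} is what lets the box-sum extend to all $r\in\Z^n$ in the limit. The odd case \eqref{Eq_lim-Odd} is handled identically, the only differences being that there are $n-1$ rather than $n$ of the $M_j$-ratios, so that the residual $x_j$-monomials cancel in pairs apart from a factor $x_n^{-r_i}=1$ and produce $K=m+n-\tfrac12$ together with $(-q^{3/2-n}/bc)^{r_i}$, while the prefactor is now $\Delta_{\B}(-xq^r)/\Delta_{\B}(-x)$; this yields \eqref{Eq_thm-2}.

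On the right-hand side I would invoke \eqref{Eq_Rk}, so that the merged and fully truncated limit of $R_N$ equals $D(x^{\pm};b,c;q)\sum_{\la,\,\la_1\leq 2m}q^{\abs{\la}/2}h_{\la}^{(m)}(-q^{1/2}/b,-q^{1/2}/c;q)P'_{\la}(x^{\pm};q)$. Dividing the limiting equality $L_m(x^{\pm};b,c;q)=R_m(x^{\pm};b,c;q)$ by $D(x^{\pm};b,c;q)$ then gives \eqref{Eq_thm-1} in the even case and \eqref{Eq_thm-2} in the odd case. Because \eqref{Eq_Rk} rests on Corollary~\ref{Cor_conjectural}, i.e.\ on \eqref{Eq_sumM}, which is established only for $w=0$ in Proposition~\ref{Prop_Case2} --- that is, for $b\to\infty$ --- the conclusion is unconditional in that regime and conditional on the conjecture for general $b$, exactly as stated.

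The genuinely hard input is Proposition~\ref{Prop_lim}, whose merging limit must be taken past the zeros of $\Delta_{\C}$; this is deferred to the appendix and I would take it as given. Granting it, the remaining obstacle is purely analytic: justifying that the iterated limit ($y\to x^{-1}$ at fixed $N$, then $b_\ell,c_\ell\to\infty$, then $M,N\to\infty$) may be passed through the sum over $r$. Here I would use the hypothesis $\abs{q/bc}<1$ together with dominated convergence, exactly as in the derivation of \eqref{Eq_Rk}, treating the boundary $\abs{q/bc}=1$ by continuity; the box-support $-M\subseteq r\subseteq N$ of the summand ensures that letting $M,N\to\infty$ introduces no boundary contributions.
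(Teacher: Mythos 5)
Your proposal is correct and follows essentially the same route as the paper: the merging limit $y_i\to x_i^{-1}$ handled by Proposition~\ref{Prop_lim} on the left, the expansion \eqref{Eq_Rk} (resting on \eqref{Eq_sumM}, proven only for $w=0$, i.e.\ $b\to\infty$) on the right, and then the termwise limits $b_\ell,c_\ell\to\infty$ and $M,N\to\infty$ producing the exponents $K=m+n$ and $K=m+n-\tfrac12$ exactly as in \eqref{Eq_thm-1} and \eqref{Eq_thm-2}. Your explicit bookkeeping of the asymptotics $(ax_i)_{r_i}\sim(-ax_i)^{r_i}q^{\binom{r_i}{2}}$ and of the cancellation $x_n^{-r_i}=1$ in the odd case matches what the paper leaves implicit in its one-line derivation.
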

Recalling that $h_0^{(0)}(w,z;q)=(wz)_{\infty}$, we note that
for $m=0$ both identities are limiting cases
of Gustafson's $\C_n^{(1)}$-analogue of Bailey's sum of a
very-well poised $_6\psi_6$ series \cite{Gustafson87}.
We also note that for $b\to\infty$ the right-hand side of 
\eqref{Eq_thm-1} and \eqref{Eq_thm-2} simplifies to
\begin{equation}\label{Eq_binf}
\sum_{\substack{\la \\[1.5pt] \la_1\leq 2m}} q^{\abs{\la}/2}
(-q^{1/2}/c)^{l(\la_{\odd})} P'_{\la}\big(x^{\pm};q\big).
\end{equation}

We now consider the various specialisations of Theorem~\ref{Thm_main}.
Noting that for $x=(x_1,\dots,x_n)$,
\[
D\big(x^{\pm};b,c;q\big)=
(q)_{\infty}^n 
\prod_{i=1}^n \frac{(qx_i^{\pm 2})_{\infty}}
{(qx_i^{\pm}/b,qx_i^{\pm}/c)_{\infty}}
\prod_{1\leq i<j\leq n} (qx_i^{\pm}x_j^{\pm})_{\infty},
\]
and recalling Lemma~\ref{Lem_Cn-WK}, it follows that in the $b,c\to\infty$ 
limit the left-hand side of \eqref{Eq_thm-1} yields
the $\C_n^{(1)}$ character \eqref{Eq_char-Cn} for $\La=m\La_0$.
(Note in particular that for this highest weight the partition $\la$ in 
Lemma~\ref{Lem_Cn-WK} is $0$ so that the symplectic Schur 
function in \eqref{Eq_Cn-char-La} trivialises to $1$.)
But when $c\to\infty$ the summand of \eqref{Eq_binf}
vanishes unless $l(\la_{\odd})=0$, i.e., unless $\la$ is even.
We thus obtain \eqref{Eq_Cn-char}.
Similarly, for $b\to\infty$ and $c\to -q^{1/2}$, and by
appeal to Lemma~\ref{Lem_A2n2-WK-1} and 
$(aq)_{\infty}/(-aq^{1/2})_{\infty}=(aq^{1/2})_{\infty}(aq^2;q^2)_{\infty}$, 
we arrive at \eqref{Eq_A2n2-char}. This completes our proof of
Theorem~\ref{Thm_Cn-character-formula}.

If we take $b\to\infty$ and $c=-1$ in \eqref{Eq_thm-1}, and
use Lemma~\ref{Lem_A2n2-WK-2} as well as 
$(a^2q)_{\infty}/(-aq)_{\infty}=(aq)_{\infty}(a^2q;q^2)_{\infty}$,
we obtain our next theorem.

\begin{theorem}\label{Thm_A2n2-char}
Let $\gfrak=\A_{2n}^{(2)}$, $\La=m\La_n$ for $m$ a nonnegative integer, and 
\[
q=\eup^{-\delta}\quad\text{and}\quad 
x_i=\eup^{-\alpha_0-\cdots-\alpha_{n-i}}.
\]
Then
\begin{equation}\label{Eq_A2n2-char2}
\eup^{-\La} \ch V(\La)=\sum_{\substack{\la \\[1.5pt] \la_1\leq 2m}}
q^{(\abs{\la}+l(\la_{\odd}))/2} P'_{\la}\big(x^{\pm};q\big).
\end{equation}
\end{theorem}

Our next result corresponds to \eqref{Eq_thm-1} for $b=-1$ and $c=-q^{1/2}$.
Then the summand on the right simplifies, since
\begin{align}\label{Eq_h-spec}
h_{\la}^{(m)}(q^{1/2},1;q)&=\prod_{i=1}^{2m-1}(-q^{1/2};q^{1/2})_{m_i(\la)}
\\ &=\frac{1}{(-q^{1/2};q^{1/2})_{\infty}}
\prod_{i=0}^{2m-1}(-q^{1/2};q^{1/2})_{m_i(\la)}, \notag
\end{align}
by $H_m(q^{1/2};q)=(-q^{1/2};q^{1/2})_m$ \cite{Warnaar06}.
If on the left we use Lemma~\ref{Lem_Dn2-WK} and the simple identity 
$(a^2q)_{\infty}/(-aq^{1/2},-aq)_{\infty}=(aq^{1/2};q^{1/2})_{\infty}$,
we obtain the following theorem.

\begin{theorem}\label{Thm_Dn2}
Let $\gfrak=\D_{n+1}^{(2)}$, $\La=2m\La_0$ for $m$ a nonnegative integer, and 
\[
q=\eup^{-\delta}
\quad\text{and}\quad 
x_i=\eup^{-\alpha_i-\cdots-\alpha_n}.
\]
Then
\begin{equation}\label{Eq_twistedD}
\eup^{-\La} \ch V(\La)=\sum_{\substack{\la \\[1.5pt] \la_1\leq 2m}}
q^{\abs{\la}} 
\bigg(\,\prod_{i=0}^{2m-1} \big({-}q\big)_{m_i(\la)}\bigg)
P'_{\la}\big(x^{\pm};q^2\big).
\end{equation}
\end{theorem}

\section{Dedekind $\eta$-function identities}\label{Sec_dedekind}

In the appendix of his paper \cite{Macdonald72} Macdonald gave his 
now famous list of identities for powers of the Dedekind $\eta$-function
$\eta(\tau)=q^{1/24}\prod_{j=1}^{\infty}(1-q^j)$,
where $q=\exp(2\pi\iup\tau)$ for $\textrm{Im}(\tau)>0$.
The simplest of his identities correspond to the non-twisted 
affine Lie algebras $\gfrak=\mathrm{X}_n^{(1)}$ and yield expansions of 
$\eta(\tau)^{\dim(\mathrm{X}_n)}$.
For example, Macdonald's formula for $\C_n^{(1)}$ generalises Jacobi's
well known identity for the third power of the $\eta$-function to
\begin{equation}\label{Eq_Cn-example} 
\eta(\tau)^{2n^2+n}=c_0
\sum q^{\frac{\|v\|^2}{4(n+1)}}
\prod_{i=1}^n v_i 
\prod_{1\leq i<j\leq n}\big(v_i^2-v_j^2\big),
\end{equation}
where $c_0=1/(1!3!\cdots(2n-1)!)$ and
where the sum is over $v\in\Z^n$ such that $v_i\equiv n-i+1\pmod{2n+2}$.

In this final section we extend many of Macdonald's identities
by specialising our character formulae.
To facilitate comparison with Macdonald's results we adopt his
definitions of $\chi_{\B}$ and $\chi_{\D}$ as given by 
\eqref{Eq_chi-B} and
\[
\chi_{\D}(v)=\prod_{1\leq i<j\leq n} 
\big(v_i^2-v_j^2\big).
\]
We also write $\chi_{\gfrak}(v/w)=\chi_{\gfrak}(v)/\chi_{\gfrak}(w)$ and
define the classical $\gfrak$-Weyl vectors $\rho_{\gfrak}$ by
\[
\rho_{\B}=(n-1/2,\dots,3/2,1/2),\quad
\rho_{\C}=(n,\dots,2,1),\quad
\rho_{\D}=(n-1,\dots,1,0).
\]
Since carrying out the required specialisations in the Weyl--Kac
formula is standard, see e.g., \cite{Macdonald72,Kac90}, we only 
list the final $\eta$-function identities below. 
For $m=0$ these correspond to Macdonald's results.
In the identities below we also give alternative 
expressions for the right-hand side as implied by
Theorem~\ref{Thm_diseen} ($m=1$) and Conjecture~\ref{Con_spec}
($m\geq 2$). This equality will be written as 
$\xlongequal{\!\!\!?_{m>2}\!\!}$.
Because in each case we have $u=1$ we will write
$F_{m,n}(w,z;q)$ for $F_{m,n}(1,w,z;q)$. 

\subsection*{Type $\C_n^{(1)}$}
If we specialise $x=(x_1,\dots,x_n)$ to $(1,\dots,1)$ in \eqref{Eq_Cn-char}
we obtain a generalisation of \eqref{Eq_Cn-example} 
(or \cite[p. 136, (6)]{Macdonald72}):
\begin{multline}\label{Eq_FS}
\frac{1}{\eta(\tau)^{2n^2+n}}
\sum_v \chi_{\B}(v/\rho)\hspace{1pt}
q^{\frac{\|v\|^2-\|\rho\|^2}{4(m+n+1)}+
\frac{\|\rho\|^2}{4(n+1)}} \\
=\sum_{\substack{\la \textup{ even} \\[1.5pt] \la_1\leq 2m}}
q^{\abs{\la}/2} P'_{\la}(\underbrace{1,\dots,1}_{2n \textup{ times}};q)
=F_{m,2n}(0,0;q),
\end{multline}
where $\rho=\rho_{\C}$, $v\in\Z^n$ such that $v\equiv \rho\pmod{2m+2n+2}$
and $m\geq 0$.
The equality between the first and last expression was proved
by Feigin and Stoyanovsky \cite{FS94} ($n=1$) 
and Stoyanovsky \cite{Stoyanovsky98} ($n>1$).
The implied equality between the two expressions in the second line
proves Conjecture~\ref{Con_spec} for $n$ even, $u=1$ and $w=z=0$.

\subsection*{Type $\A_{2n}^{(2)}$ (or affine $\BC_n$)}
If we specialise $x=(x_1,\dots,x_n)$ to $(1,\dots,1)$ in \eqref{Eq_A2n2-char2}
we obtain a generalisation of \cite[page 138, (6a)]{Macdonald72}:
\begin{multline*}
\frac{\eta(2\tau)^{2n}}{\eta(\tau)^{2n^2+3n}}
\sum_v \chi_{\B}(v/\rho) \hspace{1pt}
q^{\frac{\|v\|^2-\|\rho\|^2}{2(2m+2n+1)}+
\frac{\|\rho\|^2}{2(2n+1)}} \\
=\sum_{\substack{\la \\[1.5pt] \la_1\leq 2m}} 
q^{(\abs{\la}+l(\la_{\odd}))/2}
P'_{\la}(\underbrace{1,\dots,1}_{2n \textup{ times}};q)
\xlongequal{\!\!\!?_{m>2}\!\!}
F_{m,2n}(0,q^{1/2};q),
\end{multline*}
where $\rho=\rho_{\B}$ and $v\in(\Z/2)^n$ such that 
$v\equiv \rho\pmod{2m+2n+1}$.

If we specialise $x=(x_1,\dots,x_n)$ to $(1,\dots,1)$ in \eqref{Eq_A2n2-char}
we obtain a generalisation of \cite[p. 138, (6b)]{Macdonald72}:
\begin{multline*}
\frac{1}{\eta(\tau/2)^{2n}\eta(2\tau)^{2n} \eta(\tau)^{2n^2-3n}}
\sum_v \chi_{\B}(v/\rho) \hspace{1pt}
q^{\frac{\|v\|^2-\|\rho\|^2}{2(2m+2n+1)}+\frac{\|\rho\|^2}{2(2n+1)}} \\
=\sum_{\substack{\la \\[1.5pt] \la_1\leq 2m}}
q^{\abs{\la}/2} P'_{\la}(\underbrace{1,\dots,1}_{2n \textup{ times}};q) 
\xlongequal{\!\!\!?_{m>2}\!\!}
F_{m,2n}(0,1;q),
\end{multline*}
where $\rho=\rho_{\C}$ and $v\in\Z^n$ such that $v\equiv \rho\pmod{2m+2n+1}$.

If we let $b,c\to\infty$ in \eqref{Eq_thm-2} 
and then specialise $x=(x_1,\dots,x_{n-1},1)$ to $(1,\dots,1)$
we obtain a generalisation of \cite[page 138, (6c)]{Macdonald72}:
\begin{multline}\label{Eq_RR}
\frac{1}{\eta(\tau)^{2n^2-n}} 
\sum_v (-1)^{\abs{v}-\abs{\rho}}
\chi_{\D}(v/\rho) \hspace{1pt}
q^{\frac{\|v\|^2-\|\rho\|^2}{2(2m+2n+1)}+
\frac{\|\rho\|^2}{2(2n+1)}} \\
=\sum_{\substack{\la \text{ even} \\[1.5pt] \la_1\leq 2m}} q^{\abs{\la}/2}
P'_{\la}(\underbrace{1,\dots,1}_{2n-1 \textup{ times}};q)
\xlongequal{\!\!\!?_{m>2}\!\!}
F_{m,2n-1}(0,0;q),
\end{multline}
where $\rho=\rho_{\B}$ and $v$ is summed over $(\Z/2)^n$ such that 
$v\equiv \rho\pmod{2m+2n+1}$.
By $P'_{2\la}(x;q)=x^{2\abs{\la}} q^{2n(\la)}/b_{\la}(q)$
it follows that for $n=1$ the two expressions on the second line
are identically the same and (after replacing $m$ by $k-1$)
are given by the famous Rogers--Ramanujan--Andrews--Gordon 
series \cite{Andrews74,Gordon61}
\[
\sum_{n_1,\dots,n_{k-1}\geq 0} \frac{q^{N_1^2+\cdots+N_{k-1}^2}}
{(q)_{n_1}\cdots(q)_{n_{k-1}}},
\]
where $N_i=n_i+\cdots+n_{k-1}$.
Of course, by the Jacobi triple product identity
the left hand side for $n=1$ can be written in the familiar
product form
\[
\frac{(q^k,q^{k+1},q^{2k+1};q^{2k+1})_{\infty}}{(q)_{\infty}}.
\]
We may thus view \eqref{Eq_RR} as an $\A_{2n}^{(2)}$ analogue of these
famous $q$-series identities. 
In \cite[Conjecture 1.1 and Theorem 1.2]{WZ12} 
the equality between the left-most and right-most expressions
in \eqref{Eq_RR} was conjectured and proved for $m=1$.
The connection between the Rogers--Ramanujan partition identities
and the representation theory of Kac--Moody algebras is 
certainly not new, and we refer the interested reader to
\cite{Capparelli96,LM78,LM78b,LW78,LW82,LW84,Kac90,MP87,MP99} 
and references therein.

\subsection*{Type $\B_n^{(1)}$}

If we set $b=-1$, $c=-q^{1/2}$ in \eqref{Eq_thm-2}
and then specialise $x=(x_1,\dots,x_{n-1},1)$ to $(1,\dots,1)$, we obtain a 
generalisation of \cite[p. 135, (6c)]{Macdonald72}:
\begin{align*}
& \frac{1}{\eta(\tau/2)^{2n}\eta(\tau)^{2n^2-3n}} 
\sum_v (-1)^{\abs{v}-\abs{\rho}} \chi_{\D}(v/\rho) \hspace{1pt}
q^{\frac{\|v\|^2-\|\rho\|^2}{2(2m+2n-1)}+\frac{\|\rho\|^2}{2(2n-1)}} \\
& \qquad = \sum_{\substack{\la \\[1.5pt] \la_1\leq 2m}} q^{\abs{\la}/2} 
\bigg(\,\prod_{i=0}^{2m-1} \big({-}q^{1/2};q^{1/2}\big)_{m_i(\la)}\bigg)
P'_{\la}(\underbrace{1,\dots,1}_{2n-1 \textup{ times}};q) \\
& \qquad \xlongequal{\!\!\!?_{m>2}\!\!}
(-q^{1/2};q^{1/2})_{\infty} \, F_{m,2n-1}(q^{1/2},1;q),
\end{align*}
where $\rho=\rho_{\D}$, $v\in\Z^n$ such that $v\equiv \rho\pmod{2m+2n-1}$ 
and $m_0(\la):=\infty$.
The second equality assumes $m\geq 1$.

\subsection*{Type $\A_{2n-1}^{(2)}$ (or $\B_n^{\vee}$)}
If we let $b\to\infty$, $c\to -1$ in \eqref{Eq_thm-2}
and then specialise $x=(x_1,\dots,x_{n-1},1)$ to $(1,\dots,1)$
we obtain a generalisation of \cite[page 136 (6b)]{Macdonald72}
\begin{multline}\label{Eq_Atwisted-odd}
\frac{\eta(2\tau)^{2n-1}}{\eta(\tau)^{2n^2+n-1}}
\sum (-1)^{\frac{\abs{v}-\abs{\rho}}{2(m+n)}}
\chi_{\D}(v/\rho) \hspace{1pt}
q^{\frac{\|v\|^2-\|\rho\|^2}{4(m+n)}+\frac{\|\rho\|^2}{4n}} \\
=\sum_{\substack{\la \\[1.5pt] \la_1\leq 2m}} 
q^{(\abs{\la}+l(\la_{\odd}))/2} 
P'_{\la}(\underbrace{1,\dots,1}_{2n-1 \textup{ times}};q)
\xlongequal{\!\!\!?_{m>2}\!\!} F_{m,2n-1}(0,q^{1/2};q),
\end{multline}
where $\rho=\rho_{\D}$, $v\in\Z^n$ such that 
$v\equiv \rho\pmod{2m+2n}$.
A somewhat different generalisation of the same $\eta$-function identity
arises if we take $b=-c=1$ in \eqref{Eq_thm-1}, then use \cite{Warnaar06}
\[
h_{\la}^{(m)}(-q^{1/2},q^{1/2};q)=
\begin{cases} \displaystyle
q^{l(\la_{\odd})/2} \prod_{i=1}^{2m-1}(q;q^2)_{\lceil m_i(\la)/2\rceil}
& \text{for $m_{2i-1}(\la)$ even} \\[2mm]
0 & \text{otherwise},
\end{cases}
\]
and $h_0^{(0)}(-q^{1/2},q^{1/2};q)=
(-q)_{\infty}=(q^2;q^2)_{\infty}/(q)_{\infty}$, and finally specialise
$x=(x_1,\dots,x_n)$ to $(1,\dots,1)$. Then
\begin{align*}
\LHS\eqref{Eq_Atwisted-odd}&=
\sum_{\substack{\la \\[1.5pt] \la_1\leq 2m \\[1.5pt] (\la_{\odd})'
\text{ is even}}} q^{(\abs{\la}+l(\la_{\odd}))/2}
\bigg(\,\prod_{i=0}^{2m-1}
(q;q^2)_{\lceil m_i(\la)/2\rceil}\bigg)
P'_{\la}(\underbrace{1,\dots,1}_{2n \textup{ times}};q) \\
&\xlongequal{\!\!\!?_{m>2}\!\!} (q;q^2)_{\infty} 
F_{m,2n}(-q^{1/2},q^{1/2};q),
\end{align*}
where $m_0(\la):=\infty$ and the second equality assumes $m\geq 1$.

\subsection*{Type $\D_{n+1}^{(2)}$ (or $\C_n^{\vee}$)}
If we specialise $x=(x_1,\dots,x_n)$ to $(1,\dots,1)$ in 
\eqref{Eq_twistedD} we obtain a generalisation of
\cite[page 137, (6a)]{Macdonald72}:
\begin{multline}\label{Eq_p1376a}
\frac{1}{\eta(\tau)^{2n+1}\eta(2\tau)^{2n^2-n-1}}
\sum_v \chi_{\B}(v/\rho) \hspace{1pt}
q^{\frac{\|v\|^2-\|\rho\|^2}{2(m+n)}+\frac{\|\rho\|^2}{2n}} \\
=\sum_{\substack{\la \\[1.5pt] \la_1\leq 2m}} q^{\abs{\la}} 
\bigg(\,\prod_{i=0}^{2m-1} ({-}q)_{m_i(\la)}\bigg)
P'_{\la}(\underbrace{1,\dots,1}_{2n \textup{ times}};q^2)
\xlongequal{\!\!\!?_{m>2}\!\!} (-q)_{\infty} F_{m,2n}(q,1;q^2),
\end{multline}
where $\rho=\rho_{\B}$, $v\in(\Z/2)^n$ such that $v\equiv \rho\pmod{2m+2n}$
and second equality assumes $m\geq 1$.

\begin{comment}
If we let $b\to\infty$ and $c=q^{1/2}$ in \eqref{Eq_thm-2},
then specialise $x=(x_1,\dots,x_{n-1},1)=(1,\dots,1)$, and finally replace
$q\mapsto q^2$ we obtain a generalisation of
\cite[page 137, (6b)]{Macdonald72}:
\begin{multline*}
\frac{\eta(\tau)^{2n-1}}{\eta(2\tau)^{2n^2+n-1}}
\sum_v \chi_{\D}(v/\rho) \hspace{1pt}
q^{\frac{\|v\|^2-\|\rho\|^2}{2(m+n)}+\frac{\|\rho\|^2}{2n}} \\
=\sum_{\substack{\la \\[1.5pt] \la_1\leq 2m}} 
(-1)^{l(\la_{\odd})} q^{\abs{\la}}
P'_{\la}(\underbrace{1,\dots,1}_{2n-1 \textup{ times}};q^2)
\xlongequal{\!\!\!?_{m>2}\!\!} 
F_{m,2n-1}(0,-1;q^2),
\end{multline*}
with $v$ summed as above.
\end{comment}

Finally, if we let $b\to\infty$ and $c=-q^{1/2}$ in \eqref{Eq_thm-2},
then specialise $x=(x_1,\dots,x_{n-1},1)=(1,\dots,1)$ and replace
$q\mapsto q^2$ we obtain
\begin{multline*}
\frac{1}{\eta(\tau)^{2n-1}\eta(4\tau)^{2n-1}\eta(2\tau)^{2n^2-5n+2}}
\sum_v (-1)^{\frac{\abs{v}-\abs{\rho}}{2(m+n)}}
\chi_{\D}(v/\rho) \hspace{1pt}
q^{\frac{\|v\|^2-\|\rho\|^2}{2(m+n)}+
\frac{\|\rho\|^2}{2n}} \\
=\sum_{\substack{\la \\[1.5pt] \la_1\leq 2m}} 
q^{\abs{\la}} P'_{\la}(\underbrace{1,\dots,1}_{2n-1 \textup{ times}};q^2)
\xlongequal{\!\!\!?_{m>2}\!\!} F_{m,2n-1}(0,1;q^2),
\end{multline*}
with $v$ as in \eqref{Eq_p1376a}.
For $m=0$ (and after replacing $q$ by $-q$) we recover 
\cite[page 137, (6b)]{Macdonald72}. For $m>0$ the above 
should be viewed as a generalisation of Andrews' generalised
G\"ollnitz--Gordon $q$-series \cite{Andrews75}.

\medskip

To conclude this section we remark that Leininger and Milne employed multiple 
basic hypergeometric series for $\mathrm{A}_n$ (as opposed to the $\C_n$ series 
used in this paper) to derive other infinite families of identities for 
powers of the $\eta$-function, see \cite{Leininger97}, \cite[Theorem 2.4]{LM99}
and \cite[Theorems 2.3 and 3.2]{LM99b}.

\section{Concluding remarks}\label{Sec_Conclusion}
We end the paper with some comments in response to two questions raised
by one of the referees.

The first question asked why our results do not include
combinatorial character formulas for what is perhaps the simplest affine
Lie algebra, $\A_{n-1}^{(1)}$.
Using the Milne--Lilly Bailey lemma for $\A_{n-1}$ \cite{ML92,ML95}
it is indeed possible to prove an $\A_{n-1}$ counterpart of the 
$\C_n$ Andrews transformation of Theorem~\ref{Thm_CnAndrews}.
Specialising sufficiently many of the free parameters, the right-hand side
of this transformation can again be expressed in terms of modified
Hall--Littlewood polynomials. Unfortunately, we have been unable to 
recognise (or rewrite) the left-hand side as the Weyl--Kac expression
for $\ch V(\La)$ where $\gfrak=\A_{n-1}^{(1)}$ and $\La$ is an appropriately 
chosen highest weight.
However, recently in \cite[Section 4]{GOW15} Griffin, Ono and the second 
author used Corollary~\ref{Cor_Qphyper2} to prove
a formula for characters of $\mathrm{A}_{n-1}^{(1)}$ 
of highest weight $\La=(m-k)\Lambda_0+k\Lambda_1$ 
in terms of modified Hall--Littlewood polynomials.
This formula is somewhat different in nature from the 
identities of Theorem~\ref{Thm_Cn-character-formula} 
in that it involves a limit. For example, 
when $k=0$ it takes the form
\[
\eup^{-\Lambda} \ch V(\Lambda)=
\lim_{r\to\infty}
q^{-mn\binom{r}{2}} \frac{Q'_{(m^{nr})}(x;q)}
{(x_1\cdots x_n)^{mr}},
\]
where $q=\eup^{-\alpha_0-\alpha_1-\cdots-\alpha_{n-1}}$ and
$x_i/x_{i+1}=\eup^{-\alpha_i}$ for $1\leq i\leq n-1$.
For $m=1$ this is Kirillov's formula \cite{Kirillov00} 
for the basic representation of $\A_{n-1}^{(1)}$.

The second question concerned the possibility of simpler
proofs of the combinatorial character formulas using either 
representation-theoretic ideas (utilising, for example,
the connection between affine Demazure characters and Macdonald polynomials 
\cite{Ion03,Sanderson00}) or combinatorial methods.
In fact, Rains and the second author have recently developed an 
alternative, more conceptual approach in \cite{RW15}.
In particular, using Macdonald--Koornwinder theory 
\cite{Macdonald00,Macdonald03,Koornwinder92} and virtual 
Koornwinder integrals \cite{Rains05,RV07}, we show that 
Theorems~\ref{Thm_Cn-character-formula}, \ref{Thm_A2n2-char}
and \eqref{Thm_Dn2} as well as additional identities follow 
by specialising decomposition or branching formulas for Hall--Littlewood 
polynomials of type $R$ into Hall--Littlewood polynomials of type $\A$. 
The results of \cite{RW15} still depend crucially
on Proposition~\ref{Prop_lim} of this paper but do not rely on the
$\C_n$ Bailey lemma.

\appendix

\section{Proof of Proposition~\ref{Prop_lim}}

Before proving the proposition we prepare a key lemma.
For $p$ an integer such that $0\leq p\leq n$, let 
$M=(M_1,\dots,M_p)\in\NN^p$,
$N=(N_1,\dots,N_n)\in\NN^n$ and $r\in\Z^n$, and define
\begin{subequations}\label{Eq_fkappap}
\begin{multline}
L_{M,N;r}^{(p)}(x):=
\frac{\Delta_{\C}(x q^r)}{\Delta_{\C}(x)}\,
\prod_{i=1}^n \Bigg[\prod_{\ell=1}^{m+1} \frac{(b_{\ell}x_i,c_{\ell}x_i)_{r_i}}
{(qx_i/b_{\ell},qx_i/c_{\ell})_{r_i}}
\Big(\frac{q}{b_{\ell}c_{\ell}}\Big)^{r_i} \\ 
\times
\prod_{j=1}^n 
\frac{(q^{-N_j}x_i/x_j,q^{-M_j}x_i x_j)_{r_i}}
{(q^{M_j+1}x_i/x_j,q^{N_j+1}x_ix_j)_{r_i}}\, q^{(M_j+N_j)r_i} 
\Bigg],
\end{multline}
and
\begin{equation}
L_{M,N}^{(p)}(x):=\sum_{r_1=-M_1}^{N_1}\cdots \sum_{r_n=-M_n}^{N_n}
L_{M,N;r}^{(p)}(x),
\end{equation}
\end{subequations}
where $M_{p+1}=\cdots=M_n:=0$. 
Recalling that $L_N(x)$ denotes the left-hand side of \eqref{Eq_Cn-Andrews},
we note that
\begin{equation}\label{Eq_f0}
L_N(x)=L_{\text{--},N}^{(0)}(x).
\end{equation}
We further observe that $L_{M,N}^{(n)}(x)$ coincides with the
expression for $L_{M,N}(x)$ as claimed in \eqref{Eq_lim-Even}.

Given $x=(x_1,\dots,x_n)$ we set 
$x^{(i)}=(x_1,\dots,x_{i-1},x_{i+1},\dots,x_n)$.
\begin{lemma}\label{Lem_lim}
Let $M=(M_1,\dots,M_{p-1})$ and $M'=(M_1,\dots,M_{p-1},N_{p+2})$.
For $1\leq p\leq n-1$
\[
\lim_{x_{p+1}\to x_p^{-1}} L_{M,N}^{(p-1)}(x)=L_{M',N^{(p+1)}}^{(p)}
\big(x^{(p+1)}\big).
\]
\end{lemma}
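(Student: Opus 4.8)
The plan is to exploit that $L_{M,N}^{(p-1)}(x)$ is a \emph{finite} sum. In the summand $L_{M,N;r}^{(p-1)}(x)$ each index is confined to $-M_i\le r_i\le N_i$, and since $M$ has length $p-1$ we have $M_p=\dots=M_n=0$, so that in particular $0\le r_p\le N_p$ and $0\le r_{p+1}\le N_{p+1}$. Because the sum is finite, the limit $x_{p+1}\to x_p^{-1}$ may be taken term by term, and the whole problem reduces to (i) deciding which terms survive and (ii) checking that they reassemble into $L^{(p)}_{M',N^{(p+1)}}(x^{(p+1)})$ once $r_p,r_{p+1}$ are merged into a single index. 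First I would locate the singularity: as $x_px_{p+1}\to 1$ the \emph{only} vanishing factor of $\Delta_{\C}(x)$ is $(x_px_{p+1}-1)$, so $\Delta_{\C}(xq^r)/\Delta_{\C}(x)$ carries at most the simple pole coming from $(x_px_{p+1}q^{r_p+r_{p+1}}-1)/(x_px_{p+1}-1)$, which is regular exactly when $r_p+r_{p+1}=0$.

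Competing with this pole are the two numerator factors $(q^{-M_{p+1}}x_px_{p+1})_{r_p}=(x_px_{p+1})_{r_p}$ and $(q^{-M_p}x_{p+1}x_p)_{r_{p+1}}=(x_{p+1}x_p)_{r_{p+1}}$ (using $M_p=M_{p+1}=0$), each of which acquires a simple zero from its $k=0$ term $1-x_px_{p+1}$ as soon as its index is positive. Counting orders: a term is regular and generically nonzero when $\min(r_p,r_{p+1})=0$; it is $O(x_px_{p+1}-1)$, hence killed, when $r_p\ge 1$ and $r_{p+1}\ge 1$; and it is \emph{never} genuinely singular, since whenever the pole is present a matching zero is present. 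Thus precisely the terms with $r_p=0$ or $r_{p+1}=0$ contribute, and these are parametrised bijectively by $s:=r_p-r_{p+1}$ through $(r_p,r_{p+1})=(s,0)$ for $s\ge 0$ and $(0,-s)$ for $s\le 0$. Placing $s$ at slot $p$ and deleting slot $p+1$ gives the summation range $-N_{p+1}\le s\le N_p$, i.e.\ the appended lower-limit parameter of $M'$ at slot $p$ is the partner's upper limit $N_{p+1}$.

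It then remains to match the termwise limit with the summand of $L^{(p)}_{M',N^{(p+1)}}(x^{(p+1)})$ factor by factor. For the factors indexed by $i,j\notin\{p,p+1\}$, and for the factors coupling the merged variable $x_p$ to a spectator $x_k$, this is immediate: they are continuous at $x_{p+1}=x_p^{-1}$ and restrict to their values there, reproducing the corresponding $(n-1)$-variable factors (with $x_p/x_{p+1}\to x_p^2$). For $s<0$ the relevant factors originate in the $i=p+1$ block and must be rewritten in terms of $x_p$ with a \emph{negative} index $s$ using the reflection formula $(a)_{-k}=(-a)^{-k}q^{\binom{k+1}{2}}/(q/a)_k$; this is exactly how the $L^{(p)}$ summand reads for $r_p<0$, and the hyperoctahedral symmetry recorded after \eqref{Eq_lim-Even} lets me deduce the $s\le 0$ case from $s\ge 0$ by $x_p\leftrightarrow x_p^{-1}$ rather than repeating the computation.

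The crux, and the step I expect to be the main obstacle, is the self-interaction at slots $p,p+1$: I must show that the residue of the pole cancels against the vanishing special factor to promote an unmerged slot (lower limit $0$) into a merged one (lower limit $N_{p+1}$). Concretely, for $s\ge 0$ one has
\[
\frac{x_px_{p+1}q^{s}-1}{x_px_{p+1}-1}\,(x_px_{p+1})_{s}\;\longrightarrow\;(1-q^{s})(q)_{s-1}=(q)_{s},
\]
and this $(q)_s$ cancels the factor $(q^{M_p+1})_s=(q)_s$ sitting in the denominator of the $i=j=p$ self-term. Simultaneously the $j=p+1$ part of the $i=p$ block contributes, in the limit, the numerator $(q^{-N_{p+1}}x_p^2)_s$, the denominator $(q^{N_{p+1}+1})_s$ and the prefactor $q^{N_{p+1}s}$, i.e.\ exactly the $M'_p=N_{p+1}$ data needed to turn
\[
\frac{(q^{-N_p},x_p^2)_s}{(q,\,q^{N_p+1}x_p^2)_s}\,q^{N_ps}
\quad\text{into}\quad
\frac{(q^{-N_p},\,q^{-N_{p+1}}x_p^2)_s}{(q^{N_{p+1}+1},\,q^{N_p+1}x_p^2)_s}\,q^{(N_p+N_{p+1})s}.
\]
The one residual discrepancy is a leftover $(x_p^2)_s/(qx_p^2)_s=(1-x_p^2)/(1-q^sx_p^2)$, which I would absorb into the regular part of the $\Delta_{\C}$ quotient, checking that together with the self-factors $1-x_p^2$, $1-x_{p+1}^2$ and the cross terms it collapses to the reduced quotient $\Delta_{\C}(x^{(p+1)}q^{r'})/\Delta_{\C}(x^{(p+1)})$. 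This last reconciliation is a purely $q$-shifted-factorial computation (evaluate the residue via l'Hôpital on $x_px_{p+1}-1$, then telescope against the spectator factors), delicate only because both signs of $s$ and the reduction $\Delta_{\C}^{(n)}\to\Delta_{\C}^{(n-1)}$ must be tracked at once.
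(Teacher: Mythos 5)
Your proposal is correct and follows essentially the same route as the paper: take the limit term by term, count zeros and poles at $x_px_{p+1}=1$ to show that only the terms with $\min(r_p,r_{p+1})=0$ survive, re-index the survivors by a single index running from $-N_{p+1}$ to $N_p$, and match the resulting summand factor by factor with that of $L^{(p)}_{M',N^{(p+1)}}\big(x^{(p+1)}\big)$ (the paper treats the $r_p=0$ terms directly via \eqref{Eq_min}, where you instead invoke the reflection formula together with the hyperoctahedral symmetry). Note that your value $M'_p=N_{p+1}$ is the one consistent with the paper's own proof; the $N_{p+2}$ appearing in the printed statement of the lemma is a typographical error.
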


\begin{proof}
Let us first focus on the numerator and denominator terms of 
$L_{M,N}^{(p-1)}(x)$ that vanish when $x_{p+1}\to 1/x_p$.
By $\prod_{i=1}^n \prod_{j=p}^n (x_i x_j)_{r_i}$
the numerator contains the factor
$(x_p x_{p+1})_{r_p}(x_p x_{p+1})_{r_{p+1}}$,
which in turn results in a factor $(1-x_p x_{p+1})^2$ 
if $r_p$ and $r_{p+1}$ are both positive, 
$1-x_p x_{p+1}$ if only one of these is positive and 
$1$ if both are zero. From $\Delta_{\C}(xq^r)/\Delta_{\C}(x)$
we pick up the contribution
\[
\frac{1-x_p x_{p+1} q^{r_p+r_{p+1}}}{1-x_p x_{p+1}},
\]
which is $1$ if both $r_p$ and $r_{p+1}$ are zero,
but leads to a factor $(1-x_p x_{p+1})$ in the denominator
if (at least) one of $r_p,r_{p+1}$ is positive.
As a result, $L_{M,N;r}^{(p-1)}(x)$ vanishes in the limit 
$x_{p+1}\to 1/x_p$ unless one of $r_p,r_{p+1}$ is zero.

It is now a somewhat tedious, but elementary exercise to show that
\[
\lim_{x_{p+1}\to x_p^{-1}} 
\Big(L_{M,N;r}^{(p-1)}(x)\big|_{r_{p+1}=0}\Big)
=L_{M',N^{(p+1)};r^{(p+1)}}^{(p)}\big(x^{(p+1)}\big),
\]
where $r^{(i)}:=(r_1,\dots,r_{i-1},r_{i+1},\dots,r_n)$.
Again elementary, although now requiring
\begin{equation}\label{Eq_min}
\frac{(a)_{-n}}{(b)_{-n}}=\frac{(q/b)_n}{(q/a)_n} 
\Big(\frac{b}{a}\Big)^n,
\end{equation}
is to show that
\[
\lim_{x_{p+1}\to x_p^{-1}} 
\Big(L_{M,N;r}^{(p-1)}(x)\big|_{r_p=0} \Big)
=L_{M',N^{(p+1)};\hat{r}^{(p)}}^{(p)}\big(x^{(p+1)}\big),
\]
where $\hat{r}^{(i)}:=(r_1,\dots,r_{i-1},-r_{i+1},r_{i+2},\dots,r_n)$.
Consequently, 
\begin{align*}
\lim_{x_{p+1}\to x_p^{-1}} 
L_{M,N}^{(p-1)}(x)
&=
\sum_{\substack{-M_i \leq r_i\leq N_i \\[1pt]
i=1,\dots,n \\[1pt] i\neq p,p+1}}
\Bigg(\, \sum_{\substack{r_p=0 \\ r_{p+1}=0}}^{N_p}
+\sum_{\substack{r_{p+1}=1 \\ r_p=0}}^{N_{p+1}} \,\Bigg) 
\lim_{x_{p+1}\to x_p^{-1}} L_{M,N;r}^{(p-1)}(x) \\
&=\sum_{\substack{-M_i \leq r_i\leq N_i \\[1pt]
i=1,\dots,n \\[1pt] i\neq p,p+1}}
\Bigg(\,
\sum_{r_p=0}^{N_p} L_{M',N^{(p+1)};r^{(p+1)}}^{(p)}\big(x^{(p+1)}\big) \\
& \qquad\qquad\qquad\quad+
\sum_{r_{p+1}=1}^{N_{p+1}}
L_{M',N^{(p+1)};\hat{r}^{(p)}}^{(p)}\big(x^{(p+1)}\big)\Bigg),
\end{align*}
where $M_{p+2}=\cdots=M_n:=0$.
Renaming the summation index $r_{p+1}$ as $-r_p$, this yields
\[
\lim_{x_{p+1}\to x_p^{-1}} L_{M,N}^{(p-1)}(x)
=\sum_{\substack{-M'_i \leq r_i\leq N_i \\[1pt]
i=1,\dots,n \\[1pt] i\neq p+1}}
L_{M',N^{(p+1)};r^{(p+1)}}^{(p)}\big(x^{(p+1)}\big) \\
=L_{M',N^{(p+1)}}^{(p)}\big(x^{(p+1)}\big),
\]
where $M'_{p+1}=\cdots=M'_n:=0$.
\end{proof}

Equipped with Lemma~\ref{Lem_lim}, the proof of 
Proposition~\ref{Prop_lim} is straightforward.
\begin{proof}
According to Lemma~\ref{Lem_lim} 
\[
\lim_{y_p\to x_p^{-1}} L_{M,N}^{(p-1)}
(x_1,\dots,x_p,y_p,x_{p+1},\dots,x_n) 
=L_{M',N^{(p+1)}}^{(p)}(x).
\]
Iterating this equation and recalling \eqref{Eq_f0} gives
\begin{multline*}
\lim_{y_1\to x_1^{-1},\dots,y_p\to x_p^{-1}} 
L_{(N_1,M_1,\dots,N_p,M_p,N_{p+1},\dots,N_n)}
(x_1,y_1,\dots,x_p,y_p,x_{p+1},\dots,x_n) \\
=L_{M,N}^{(p)}(x).
\end{multline*}
Recalling the remark made immediately after \eqref{Eq_f0}
this yields \eqref{Eq_lim-Even} when $p=n$.
If $p=n-1$, however, we obtain
\begin{multline*}
\lim_{y_1\to x_1^{-1},\dots,y_{n-1}\to x_{n-1}^{-1}} 
L_{(N_1,M_1,\dots,N_{n-1},M_{n-1},N_n)}
(x_1,y_1,\dots,x_{n-1},y_{n-1},x_n) \\
=\sum_{-M\subseteq r\subseteq N}
\frac{\Delta_{\C}(x q^r)}{\Delta_{\C}(x)}\,
\prod_{i=1}^n \Bigg[\prod_{\ell=1}^{m+1} \frac{(b_{\ell}x_i,c_{\ell}x_i)_{r_i}}
{(qx_i/b_{\ell},qx_i/c_{\ell})_{r_i}}
\Big(\frac{q}{b_{\ell}c_{\ell}}\Big)^{r_i} \\ 
\times
\prod_{j=1}^n 
\frac{(q^{-N_j}x_i/x_j,q^{-M_j}x_i x_j)_{r_i}}
{(q^{M_j+1}x_i/x_j,q^{N_j+1}x_ix_j)_{r_i}}\, q^{(M_j+N_j)r_i}
\Bigg],
\end{multline*}
where $M_n:=0$.
Letting $x_n$ tend to $1$,
treating the $r_n=0$ and $r_n>0$ cases 
of the summand separately, results in
\begin{multline*}
\hat{L}_{M,N}(\hat{x})=
\sum_{-M\subseteq r\subseteq N} u_{r_n}
\frac{\Delta_{\B}(-x q^r)}{\Delta_{\B}(-x)}
\prod_{i=1}^n \Bigg[
\prod_{\ell=1}^{m+1} \frac{(b_{\ell}x_i,c_{\ell}x_i)_{r_i}}
{(qx_i/b_{\ell},qx_i/c_{\ell})_{r_i}}
\Big(\frac{q}{b_{\ell}c_{\ell}}\Big)^{r_i} \\ 
\times
\prod_{j=1}^{n-1} 
\frac{(q^{-M_j}x_i x_j)_{r_i}}{(q^{M_j+1}x_i/x_j)_{r_i}}\, q^{M_jr_i} 
\prod_{j=1}^n 
\frac{(q^{-N_j}x_i/x_j)_{r_i}}
{(q^{N_j+1}x_ix_j)_{r_i}}\, q^{N_jr_i} 
\Bigg],
\end{multline*}
where $x=(x_1,\dots,x_{n-1},1)$ (so that $x_n:=1$), $M_n:=0$, $u_0=1$ 
and $u_i=2$ for $1\leq i\leq N_n$.
Using \eqref{Eq_min} and the fact that for $x_n=1$
\[
\frac{\Delta_{\B}(-xq^r)}{\Delta_{\B}(-x)} \bigg|_{r_n\mapsto -r_n}=
q^{-(2n-1)r_n} \frac{\Delta_{\B}(-xq^r)}{\Delta_{\B}(-x)}, 
\]
this can be rewritten in exactly the same functional form as the above
but now with $M_n:=N_n$ and $u_i=1$ for all $-M_n\leq i\leq N_n$.
\end{proof}

\bibliographystyle{amsplain}

\end{document}